\definecolor{skyblue}{rgb}{0.85,0.85,1}
\numberwithin{equation}{section}
\numberwithin{figure}{section}
\theoremstyle{plain}
\newtheorem{thm}{Theorem}[section]
\theoremstyle{definition}
\newtheorem{defn}[thm]{Definition}
\theoremstyle{plain}
\newtheorem{prop}[thm]{Proposition}
\theoremstyle{plain}
\newtheorem{cor}[thm]{Corollary}
\theoremstyle{plain}
\newtheorem{lem}[thm]{Lemma}
\newtheorem{rem}[thm]{Remark}
\DeclareMathOperator{\grad}{grad}
\DeclareMathOperator{\Hess}{Hess}
\DeclareMathOperator{\dual}{dual}
\DeclareMathOperator{\dv}{div}
\DeclareMathOperator{\Min}{Min}
\DeclareMathOperator{\Max}{Max}
\DeclareMathOperator{\Sd}{Sad}
\DeclareMathOperator{\supp}{supp}
\newcommand{\rme}{\mathrm e}
\newcommand{\Id}{\mathbb I}
\newcommand{\Nd}{\mathcal N}
\newcommand{\mc}{\mathcal}
\newcommand{\rz}{\mathbb R}
\newcommand{\R}{\mathbb R}
\newcommand{\bs}{\boldsymbol}
\newcommand{\be}{\begin{equation}}
\newcommand{\ee}{\end{equation}}
\newcommand{\ba}{\begin{aligned}}
\newcommand{\ea}{\end{aligned}}
\newcommand{\pO}{\partial\Omega}
\newcommand{\DN}{\Delta^{N}_\Omega}
\newcommand{\cupdot}{\mathbin{\mathaccent\cdot\cup}}
\begin{document}

\title{Defining the spectral position of a Neumann domain}
\author{Ram Band$^{1}$, Graham Cox$^{2}$, Sebastian K. Egger$^{1}$}
\address{$^{1}${\small{}Department of Mathematics, Technion--Israel Institute
of Technology, Haifa 32000, Israel}}
\address{$^{2}${\small{}Department of Mathematics and Statistics, Memorial University, St. John's, NL A1C 5S7, Canada}}
\begin{abstract}
A Laplacian eigenfunction on a two-dimensional Riemannian manifold
provides a natural partition into Neumann domains (a.k.a. a Morse--Smale
complex). This partition is generated by gradient flow lines of
the eigenfunction, which bound the so-called Neumann domains.
We prove that the Neumann Laplacian defined on a Neumann
domain is self-adjoint and has a purely discrete
spectrum. In addition, we prove that the restriction of an eigenfunction
to any one of its Neumann domains is an eigenfunction
of the Neumann Laplacian. 
By comparison, similar statements about the Dirichlet Laplacian on a nodal domain
of an eigenfunction are basic and well-known.
The difficulty here is that the boundary of a Neumann domain may have cusps and cracks,
so standard results about Sobolev spaces 
are not available.
Another very useful common fact is that the restricted eigenfunction
on a nodal domain is the first eigenfunction of the Dirichlet
Laplacian. This is no longer true for a Neumann domain. Our results enable the investigation of the resulting spectral
position problem for Neumann domains, which is much more involved
than its nodal analogue.
\end{abstract}

\keywords{Neumann domains, Neumann lines, nodal domains, Laplacian eigenfunctions,
Morse--Smale complexes}
\subjclass[2000]{35Pxx, 57M20}
\maketitle

\tableofcontents

\section{Introduction and statement of results}
Let $M$ be a closed, connected, orientable surface with
a smooth Riemannian metric $g$.
It is well-known that the Laplace--Beltrami operator $\Delta$ is self-adjoint and 
has a purely discrete spectrum. We arrange the eigenvalues in increasing
order
\begin{equation}
0=\lambda_{0}<\lambda_{1}\leq\lambda_{2}\leq\cdots,
\end{equation}
and let $\{f_{n}\}_{n=0}^{\infty}$ denote a corresponding complete
system of orthonormal eigenfunctions, so that
\begin{equation}
\Delta f_{n}=\lambda_{n}f_{n}.
\end{equation}

While we are motivated by the study of eigenfunctions, most of the results and constructions in this paper are valid for arbitrary Morse functions. It is well known that for a generic Riemannian metric all of the Laplace--Beltrami eigenfunctions are Morse \cite{Uhl_ajm76}. 

The main objects of study in this paper are the Neumann domains of
a Morse function, to be defined next. Given a smooth function $f$
on $M$, we let $\varphi:\mathbb{R}\times M\rightarrow M$ denote
the flow along the gradient vector field, i.e. the solution to
\begin{equation}
\partial_{t}\varphi(t, x)=-\grad f\big|_{\varphi(t, x)},\qquad\varphi(0, x) = x.\label{eq:flow-1}
\end{equation}
For a critical point $\bs c$ of $f$, we define its stable and unstable
manifolds by
\begin{equation}
\begin{aligned}W^{s}(\bs c) & :=\Big\{x \in M:\lim_{t\rightarrow\infty}\varphi(t,x) = \bs c \Big\},\\
W^{u}(\bs c) & :=\Big\{x \in M:\lim_{t\rightarrow-\infty}\varphi(t,x)=\bs c \Big\}.
\end{aligned}
\label{eq:Stable-Unstable-Def-1}
\end{equation}
We denote the sets of minima,
maxima and saddles of $f$ by $\Min(f)$, $\Max(f)$ and $\Sd(f)$,
respectively.
\begin{defn}[\cite{band:2016}]
\label{def:Neumann-Domains-and-Lines-1} Let $f$ be a Morse function
on $M$.
\begin{enumerate}
\item Let $\bs p\in\Min(f)$ and $\bs q\in\Max(f)$, such that $W^{s}\left(\bs p\right)\cap W^{u}\left(\bs q\right)\neq\emptyset$.
Each of the connected components of $W^{s}\left(\bs p\right)\cap W^{u}\left(\bs q\right)$
is called a \emph{Neumann domain} of $f$.
\item The \emph{Neumann line set} of $f$ is
\begin{equation}
\Nd:={\overline{\bigcup_{{\bs r\in\Sd(f)}}W^{s}(\bs r)\cup W^{u}(\bs r)}}.\label{eq:Neumann-line-set-1}
\end{equation}
\end{enumerate}
\end{defn}

This defines a partition of the manifold $M$, which we call
the Neumann partition. It is not hard to show that $M$ equals
the disjoint union of all Neumann domains and the Neumann line set, under the assumption that $\Nd\neq\emptyset$, see \cite[Proposition 1.3]{band:2016}. (Note that $\Nd = \emptyset$ means $f$ has no saddle points; this is only 
possible when $M$ is a sphere and $f$ has exactly two critical points.)
Figure \ref{fig:example} depicts the Neumann partition of a particular
eigenfunction on the flat torus.

By construction we have that $\grad f$ is parallel to the boundary of any Neumann domain $\Omega$, as the boundary is made up of gradient flow lines, so we conclude that the normal derivative vanishes, $\partial_\nu f\big|_{\pO} = 0$, assuming $\partial\Omega$ is sufficiently smooth. This formal observation motivates our study of the Neumann Laplacian on $\Omega$, which we precisely define in Definition~\ref{def:NeuLap}.

\begin{figure}[ht]
\centering{}\includegraphics[width=1\textwidth]{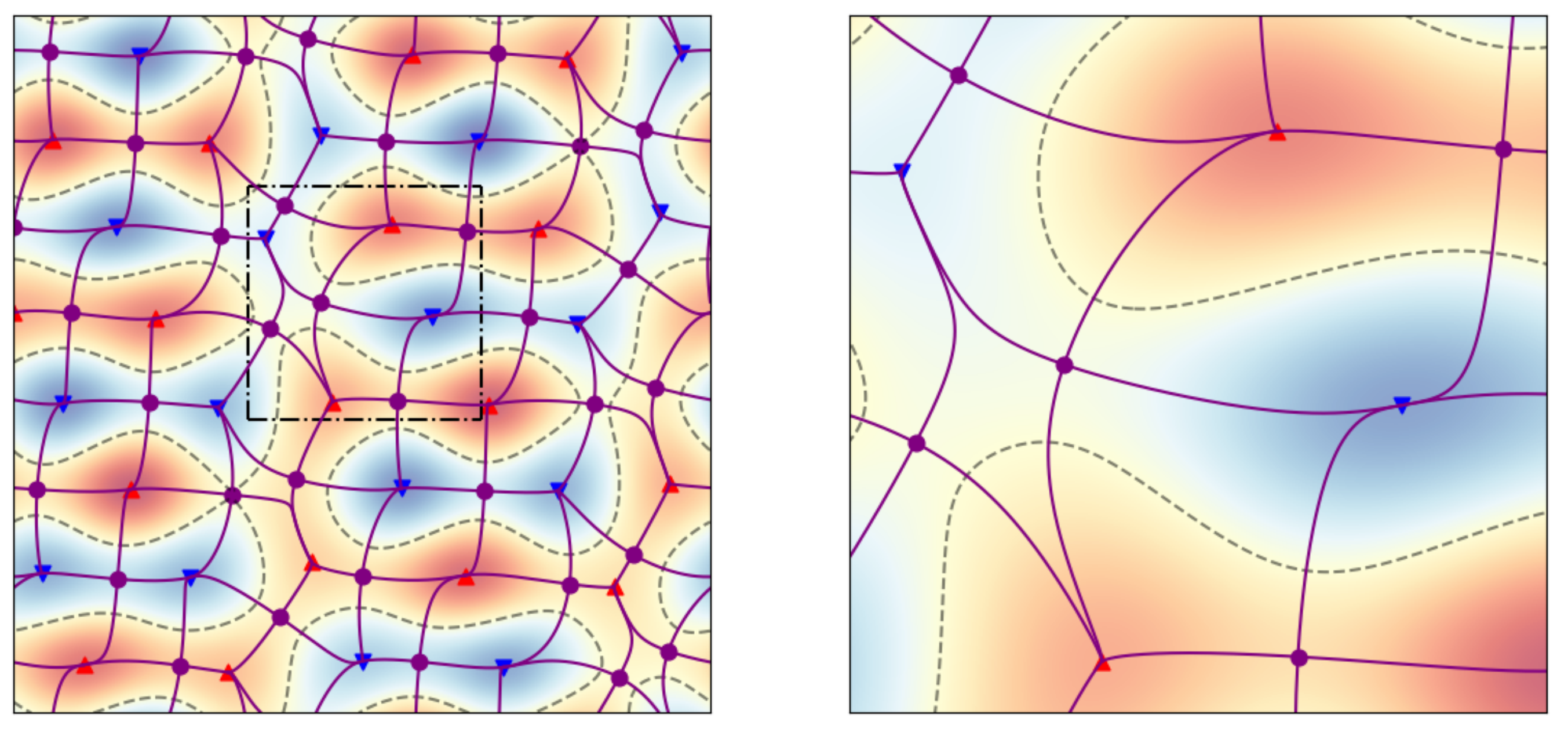} \caption{Left: An eigenfunction corresponding to the eigenvalue $\lambda=17$ on
the flat torus with fundamental domain $[0,2\pi]\times[0,2\pi]$.
Circles mark saddle points and triangles mark extremal points
(maxima by triangles pointing upwards and vice verse for minima).
The nodal set is marked by dashed lines and the Neumann line set 
by solid lines. The Neumann domains are the domains bounded
by the Neumann line set. Right: A magnification of the marked square
from the left figure, showing Neumann domains with and without cusps. 
(This figure was produced using \cite{python_Taylor}.)}
\label{fig:example}
\end{figure}

While the Dirichlet Laplacian on any bounded open set has a purely
discrete spectrum, the same is not necessarily true of the Neumann
Laplacian. Indeed, the essential spectrum may be nonempty, and in
fact can be an arbitrary closed subset of $[0,\infty)$, see \cite{HSS91}.
Nevertheless, the Neumann Laplacian of a Neumann domain is well behaved.
\begin{thm}
\label{thm:discrete} Let $\Omega$ be a Neumann domain of a Morse
function $f$. Then the Neumann Laplacian $\DN$ on
$\Omega$ is a non-negative, self-adjoint operator with purely discrete
spectrum, i.e. consisting only of isolated eigenvalues of finite multiplicity.
\end{thm}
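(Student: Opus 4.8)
The plan is to realize $\DN$ through its quadratic form and then reduce the spectral statement to a compact-embedding property, into which all of the geometric difficulty is concentrated. First I would set
\[
Q[u] := \int_\Omega |\grad u|_g^2 \,\ud V_g, \qquad \mathrm{dom}(Q) := H^1(\Omega),
\]
the Neumann form, with no boundary condition built into the domain. Since $H^1(\Omega)$ with its graph norm is complete, $Q$ is a densely defined, symmetric, non-negative, closed form on $L^2(\Omega)$, so the representation theorem for such forms produces a unique non-negative self-adjoint operator $\DN$ with form domain $H^1(\Omega)$; this is what we take as the definition of the Neumann Laplacian (cf.\ Definition~\ref{def:NeuLap}). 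The point is that this step requires no regularity of $\pO$ whatsoever, so self-adjointness and non-negativity come essentially for free.

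I would reduce discreteness of the spectrum to a single analytic fact: that the embedding $H^1(\Omega)\hookrightarrow L^2(\Omega)$ is compact. Indeed, if the form domain embeds compactly into the ambient Hilbert space, the resolvent of the associated operator is compact and the spectrum is purely discrete, by the standard compact-resolvent (min--max) criterion. Thus everything hinges on the compact embedding, and here the obstruction flagged in the abstract appears: because $\pO$ may fail to be Lipschitz (cusps at the extrema, cracks along $\Nd$), there is in general no bounded extension operator $H^1(\Omega)\to H^1(M)$, so one cannot simply invoke Rellich on the closed manifold $M$.

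I would therefore argue compactness by localization around the finitely many critical points of $f$ lying on $\pO$. Since $\overline\Omega$ is a closed subset of the compact manifold $M$, it is covered by finitely many charts. Away from $\Min(f)\cup\Max(f)\cup\Sd(f)$ the boundary is a smooth gradient flow line, so $\Omega$ is locally a smooth, hence Lipschitz, domain and Rellich gives local compactness. Near a saddle $\bs r\in\Sd(f)$, Morse coordinates present $\Omega$ locally as a Lipschitz sector bounded by arcs of $W^s(\bs r)$ and $W^u(\bs r)$ meeting transversally, so the embedding is again locally compact. The genuinely singular points are the extrema $\bs p$ and $\bs q$, where two bounding flow lines can approach tangentially and pinch $\Omega$ into a cusp.

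The main obstacle is exactly this cusp analysis. In Morse coordinates at an extremum the metric is Euclidean to leading order, and nondegeneracy of the Hessian forces the cusp to be of \emph{algebraic} (power) type, with the domain narrowing like a fixed polynomial profile rather than exponentially, in contrast to the pathological exponential cusps and rooms-and-passages constructions of \cite{HSS91}. For such a cusp I would establish a local weighted Poincar\'e-type inequality showing that the $L^2$ mass of any $H^1$-bounded family can be made uniformly small on a shrinking neighborhood of the cusp tip (tightness near the tip), typically by flattening the flow with a change of variables and integrating a one-dimensional Poincar\'e estimate across the thin direction. Combining this tightness at the finitely many cusp tips with local compactness on the remainder of $\overline\Omega$ then yields, via a standard diagonal argument, that every bounded sequence in $H^1(\Omega)$ has an $L^2$-convergent subsequence, which is the desired compact embedding. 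I expect the quantitative cusp estimate, carried out without recourse to extension operators and made uniform across all cusp geometries a Morse function can produce, to be the hardest and most delicate part.
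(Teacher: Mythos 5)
Your skeleton coincides with the paper's: the operator is defined through the closed non-negative form on the intrinsic Sobolev space (Definition~\ref{def:NeuLap}), self-adjointness and non-negativity are free, and discreteness is reduced to compactness of the embedding of the form domain into $L^2(\Omega)$ (Proposition~\ref{prop:spectral-properties-of-domain} combined with Proposition~\ref{extens}\eqref{compprime}). One caution on notation: in this paper $H^1(\Omega)$ denotes restrictions of globally defined $H^1$ functions, and the form with \emph{that} domain need not be closed when $\pO$ is not of class $C$; the form domain must be the intrinsic space $W^{1,2}(\Omega)$, which is what you evidently intend. The genuine gaps are in your compactness argument. First, you have overlooked cracked Neumann domains. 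Your localization claims that away from the critical points the boundary is a smooth flow line, hence $\Omega$ is locally Lipschitz; this is false along a crack, i.e.\ a Neumann line $\eta$ lying in the interior of $\overline\Omega$ with a degree-one extremum at its end (Figure~\ref{fig:Neumann-domains-schematic}). There the domain lies on \emph{both} sides of its boundary, so locally it is a slit disk, which is neither Lipschitz nor of class $C$, and the crack's tip is not a pinching cusp, so your cusp analysis does not cover it either. The paper handles this by dissecting $\Omega$ along $\eta$ extended by a Lipschitz curve $\tilde\eta$ (Section~\ref{secdisec}), which in turn requires knowing that Neumann lines are $C^1$ up to their endpoints (Lemma~\ref{c1neumannline}); compactness for the two resulting pieces then transfers to $\Omega$ via the isometric embedding \eqref{isoemb}. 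This case is simply absent from your proposal.

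Second, your identification of the cusp profile is unjustified in exactly the way the paper warns about. You assert that ``in Morse coordinates the metric is Euclidean to leading order, and nondegeneracy of the Hessian forces the cusp to be of algebraic type.'' This is the incomplete Taylor-expansion argument of Remark~\ref{rem:proof}: writing the flow as in \eqref{eq: Taylor_of_flow_ODE} does not allow one to conclude that the nonlinear gradient flow lines bounding $\Omega$ behave like $\rme^{-t\Hess f(\bs c)}x_0$, because of possible coupling with the higher-order terms. The rigorous input is Hartman's $C^1$-linearization theorem (Proposition~\ref{prop: Hartman}), which is what produces the description $y=c\,x^{\alpha}$ of the bounding curves in $C^1$ coordinates; without it, your weighted Poincar\'e/tightness estimate rests on an unproven geometric picture, and that estimate itself --- which you concede is the heart of the matter --- is only sketched. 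For comparison, after the Hartman step the paper does not prove a tightness inequality at all: it extends functions across the cusp into the larger class-$C$ domain $G$ via the Maz'ya--Poborchii extension operator (Lemma~\ref{lem:MP}) and invokes classical compactness and density for class-$C$ domains. Your tightness-plus-local-compactness route could plausibly be completed (it avoids extension operators, which is an attractive feature), but as written the two decisive ingredients --- the $C^1$ linearization and the quantitative cusp estimate --- are missing.
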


The main difficulty in proving this theorem is due to possible cusps on
the boundary of the Neumann domain; see Proposition
\ref{prop:angles-at-critical-pts} and the preceding discussion.
Such cusps prevent the application of standard results on density
and compact embeddings of Sobolev spaces. We overcome this difficulty
in the proof of Theorem \ref{thm:discrete} by using some particular 
geometric properties that the Neumann domain boundary possesses.

It is well known that the restriction of $f$ to any of its nodal
domains is an eigenfunction of the Dirichlet Laplacian. Similarly,
we have

\begin{thm}
\label{thm:restriction} If $\Omega$ is a Neumann domain of a Morse
function $f$, then $f\big|_{\Omega}\in\mc D(\DN)$. In particular, if $f$ is an eigenfunction of $\Delta$, then $f\big|_{\Omega}$ is an eigenfunction of $\DN$ with the same eigenvalue.
\end{thm}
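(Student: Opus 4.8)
The plan is to work with the quadratic form
\begin{equation*}
Q(u,v)=\int_\Omega\langle\nabla u,\nabla v\rangle_g\,\ud\vol_g,\qquad\mc D(Q)=H^1(\Omega),
\end{equation*}
whose associated self-adjoint operator is $\DN$. By the first representation theorem, $f\big|_\Omega\in\mc D(\DN)$ with $\DN f\big|_\Omega=h$ is equivalent to: (i) $f\big|_\Omega\in H^1(\Omega)$ and $h\in L^2(\Omega)$; and (ii) $Q(f\big|_\Omega,v)=\langle h,v\rangle_{L^2(\Omega)}$ for all $v\in H^1(\Omega)$. Condition (i) is immediate for the candidate $h:=\Delta f\big|_\Omega$: since $f\in C^\infty(M)$ and $\overline\Omega$ is compact, $f$, $\nabla f$ and $\Delta f$ are bounded on $\Omega$, and $\vol(\Omega)<\infty$ gives $f\big|_\Omega\in H^1(\Omega)$ and $\Delta f\big|_\Omega\in L^2(\Omega)$. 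Thus the theorem reduces to the Green-type identity
\begin{equation}
Q\big(f\big|_\Omega,v\big)=\langle\Delta f,v\rangle_{L^2(\Omega)}\qquad\text{for all }v\in H^1(\Omega),\label{eq:green-goal}
\end{equation}
since for an eigenfunction $\Delta f=\lambda f$ turns the right-hand side into $\lambda\langle f,v\rangle$, identifying $f\big|_\Omega$ as an eigenfunction of $\DN$ with eigenvalue $\lambda$ (it is not identically zero, as $f$ takes a continuum of values on $\Omega$).

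Using $\dv(v\,\nabla f)=\langle\nabla v,\nabla f\rangle-v\,\Delta f$, identity \eqref{eq:green-goal} is equivalent to $\int_\Omega\dv(v\,\nabla f)\,\ud\vol=0$. I would prove this directly for every $v\in H^1(\Omega)$, bypassing any appeal to density of functions smooth up to $\pO$ (which can fail at cusps); note that the naive route of extending $v$ to $\widetilde v\in H^1(M)$ and invoking the global weak eigenfunction equation is blocked precisely because cusp domains need not admit Sobolev extension operators. Instead, exhaust $\Omega$ from inside by excising small geodesic balls about the critical points, $\Omega_\epsilon:=\Omega\setminus\bigcup_{\bs c}B_\epsilon(\bs c)$, the union over the finitely many critical points in $\overline\Omega$. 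By the boundary structure underlying Theorem~\ref{thm:discrete}, in particular Proposition~\ref{prop:angles-at-critical-pts}, for small $\epsilon$ the set $\Omega_\epsilon$ is bounded by finitely many smooth arcs---flow-line segments (possibly traversed from both sides along a crack) together with circular arcs $\partial B_\epsilon(\bs c)\cap\Omega$---meeting at corners of positive angle, all cusps having been removed. On such a piecewise-smooth domain the Gauss--Green formula applies to the $H^1$ vector field $X:=v\,\nabla f$ (here $\nabla f$ is smooth, so $X\in H^1(\Omega_\epsilon)$ and $X\!\cdot\!\nu$ has an $L^2$ trace), yielding
\begin{equation}
\int_{\Omega_\epsilon}\dv(v\,\nabla f)\,\ud\vol=\int_{\partial\Omega_\epsilon}v\,\partial_\nu f\,\ud s=\sum_{\bs c}\int_{\partial B_\epsilon(\bs c)\cap\Omega}v\,\partial_\nu f\,\ud s,\label{eq:GG}
\end{equation}
where the flow-line and crack portions drop out because $\nabla f$ is tangent to them, so $\partial_\nu f=0$ there (with both orientations along a crack).

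Controlling the circular terms is the crux. Since each $\bs c$ is a critical point and $f\in C^2$, one has $|\nabla f|\le C\epsilon$ on $B_\epsilon(\bs c)$, hence $|\partial_\nu f|\le C\epsilon$ on $\partial B_\epsilon(\bs c)$. Writing $h_{\bs c}(\epsilon):=\int_{\partial B_\epsilon(\bs c)\cap\Omega}|v|\,\ud s$, the coarea formula for the distance to $\bs c$ gives $\int_0^{r_0}h_{\bs c}(\epsilon)\,\ud\epsilon=\int_{B_{r_0}(\bs c)\cap\Omega}|v|\,\ud\vol<\infty$, so $h_{\bs c}\in L^1(0,r_0)$. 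Consequently $\liminf_{\epsilon\to0}\epsilon\,h_{\bs c}(\epsilon)=0$, for otherwise $h_{\bs c}(\epsilon)\ge c/\epsilon$ for small $\epsilon$, contradicting integrability. Choosing a single sequence $\epsilon_n\downarrow0$ along which $\epsilon_n h_{\bs c}(\epsilon_n)\to0$ for all (finitely many) $\bs c$, the right-hand side of \eqref{eq:GG} is bounded by $C\sum_{\bs c}\epsilon_n h_{\bs c}(\epsilon_n)\to0$. Meanwhile $\dv(v\,\nabla f)\in L^1(\Omega)$ (as $\nabla v,v\in L^2$ and $\nabla f,\Delta f$ are bounded), so dominated convergence sends the left-hand side of \eqref{eq:GG} to $\int_\Omega\dv(v\,\nabla f)\,\ud\vol$. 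Letting $n\to\infty$ gives $\int_\Omega\dv(v\,\nabla f)\,\ud\vol=0$, which is \eqref{eq:green-goal}. The main obstacle throughout is the boundary behaviour: confirming that $\Omega_\epsilon$ is regular enough for Gauss--Green despite cusps and cracks (handled by removing critical-point neighbourhoods and invoking the established angle structure), and extracting the vanishing of the circular terms from the mere $L^1$-integrability of the boundary trace of a general $H^1$ function---which is exactly what the $O(\epsilon)$ decay of $\nabla f$ at critical points makes possible.
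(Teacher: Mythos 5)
Your argument is correct, but it takes a genuinely different route from the paper's. The paper proves Theorem \ref{thm:restriction} by first characterizing the entire domain $\mc D(\DN)$ (Proposition \ref{prop:domain_of_Laplacian}): it truncates $\Omega$ along level sets $\gamma_{\pm,t}$ of $f$, dissects cracks, and uses $H^{\pm\frac12}$ trace theory (Lemma \ref{lem:split}) to make sense of and split normal-derivative distributions; it then shows (Proposition \ref{prop:intlimit}, Corollary \ref{cor:intlimit}) that any $\psi\in W^{2,2}(\Omega)$ satisfies the residual cusp condition, via a quantitative trace estimate (Lemma \ref{lem:tracet}) balanced against the length estimate $L(\gamma_{\pm,t})=o\big((1-t)^{1/2}\big)$ (Lemma \ref{lem:length}), a chain that also leans on Hartman's theorem and the density result Proposition \ref{extens}(\ref{dense}) through Remark \ref{rem:approx}. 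You instead verify the form identity only for the particular function $f$, exhausting $\Omega$ by deleting geodesic balls about the critical points and exploiting two features special to $f$: $\grad f$ is tangent to every boundary flow line, so all boundary terms except the circular arcs vanish identically (including both sides of a crack), and $|\grad f|=O(\epsilon)$ near each critical point, which together with the coarea/pigeonhole choice of radii kills the circular terms. This is shorter and more elementary --- no $H^{\pm\frac12}$ machinery, no density theorem, no cusp asymptotics: the cusps and crack tips are themselves critical points of $f$, exactly where the vector field $v\,\grad f$ degenerates, so the decay of the field beats the bad geometry. The trade-off is scope: the paper's machinery yields the full description of $\mc D(\DN)$ and membership criteria valid for arbitrary $W^{2,2}$ functions, whereas your circular-arc bound uses $|\partial_\nu f|\le C\epsilon$ and hence does not extend beyond functions whose gradient vanishes at the singular boundary points. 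Three points in your sketch should be made precise, though all are routine: (i) your form domain must be the intrinsic space $\{v\in L^2(\Omega):\grad v\in L^2(\Omega)\}$, i.e.\ the paper's $W^{1,2}(\Omega)$ --- the paper explicitly warns that a restriction-type $H^1(\Omega)$ need not give a closed form; since your argument only uses $v,\grad v\in L^2(\Omega)$, it does cover the correct domain. (ii) For a cracked domain, $\Omega_\epsilon$ is still a slit domain, hence not Lipschitz; Gauss--Green there should be justified by dissecting $\Omega_\epsilon$ along an auxiliary interior Lipschitz curve as in Section \ref{secdisec}, the interior-cut terms cancelling because the two one-sided traces of $v$ agree there, and the slit terms vanishing because $\partial_\nu f=0$ from both sides. (iii) The single sequence $\epsilon_n$ is obtained by applying your $\liminf$ argument to $\sum_{\bs c}h_{\bs c}$, with $\epsilon_n$ chosen among the a.e.\ radii for which the $W^{1,2}$ trace of $v$ on the circle coincides with its restriction, and for which the circles meet $\partial\Omega$ transversally (the latter holds for all small $\epsilon$ by non-degeneracy of the critical points).
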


In fact, we prove much more: in Proposition \ref{prop:domain_of_Laplacian} we completely characterize the domain of the Neumann Laplacian, and in Proposition \ref{prop:intlimit} and Corollary \ref{cor:intlimit} we give some easily verified sufficient conditions for a function to be in the domain of $\DN$. 

The importance of Theorem \ref{thm:discrete} is that it allows us to define the spectral position of a Morse eigenfunction, by which we mean an eigenfunction of the Laplace--Beltrami operator that is also a Morse function.

\begin{defn}
\label{def:Spectral-Position} Let $f$ be a Morse eigenfunction for
an eigenvalue $\lambda$, and let $\Omega$ be a Neumann domain of
$f$. We define the \emph{spectral position} of $\Omega$ as the position
of $\lambda$ in the Neumann spectrum of $\Omega$, i.e.
\begin{equation}
N_{\Omega}(\lambda):=\left|\left\{ \mu_{n}\in\sigma(\DN)~:~\mu_{n}<\lambda\right\} \right|,\label{eq:Spectral-Position-1}
\end{equation}
where $\sigma(\Omega):=\{\mu_{n}\}_{n=0}^{\infty}$ is the Neumann
spectrum of $\Omega$ (which is discrete by Theorem \ref{thm:discrete}),
containing multiple appearances of degenerate eigenvalues and including
$\mu_{0}=0$.
\end{defn}

From Theorem \ref{thm:restriction} we in fact have $\lambda = \mu_n$ for some $n$, and so we can equivalently write
\[
	N_\Omega(\lambda) = \min \{n : \mu_n = \lambda\}.
\]
In particular, if $\lambda \in \sigma(\DN)$ is simple, then $\lambda = \mu_n$ for a unique $n$, and hence $N_\Omega(\lambda) = n$. This equality explains the terminology ``spectral position" for  $N_\Omega(\lambda)$.

The spectral position is a key notion for Neumann domains. Finding
its value is a great challenge and is of major importance in studying
Neumann domains and their properties \cite{band:2016,band2017ground,abbe:2018}.
The corresponding notion for a \emph{nodal} domain is trivial:
if $D$ is a nodal domain of $f$, then $f|_D$ is always the first
eigenfunction of the Dirichlet Laplacian on $D$. This is a basic
observation which serves as an essential ingredient in many nodal
domain proofs. No such result holds for Neumann domains, and in fact the spectral position of an eigenfunction restricted to a Neumann domain can be arbitrarily high, by \cite[Thm.~1.4]{band2017ground}.

\subsection*{Structure of the paper}

In Section \ref{sec: Geometric-properties-of-Neumann-Domains} we
describe some essential geometric properties of Neumann domains, emphasizing
the potentially singular nature of their boundary. In Section \ref{sec:Sobolev}
we use this geometric structure to establish fundamental properties
of Sobolev spaces on Neumann domains, including non-standard density
and compactness results. Finally, in Section \ref{sec:NeumannLaplacian}
we use these properties to study the Neumann Laplacian, in particular 
proving Theorems \ref{thm:discrete} and \ref{thm:restriction}.

\section{Geometric properties of Neumann domains}
\label{sec: Geometric-properties-of-Neumann-Domains}

As above, we take $M$ to be a closed, connected,
orientable Riemannian surface with a smooth Riemannian metric $g$.
Note that all of the statements in this section hold for arbitrary
Morse functions, and not only for eigenfunctions.  For convenience we recall the following definitions.
\begin{defn}
Let $f:M\rightarrow\rz$ be a smooth function.
\begin{enumerate}
\item $f$ is said to be a \emph{Morse function} if the Hessian, $\Hess f(\boldsymbol{p})$, is non-degenerate at every critical point $\bs{p}$ of $f$.
\item A Morse function $f$ is said to be \emph{Morse--Smale} if for all critical points $\bs p$ and $\bs q$,
the stable and unstable manifolds $W^s(\bs p)$ and $W^u(\bs q)$ intersect transversely (cf. Lemma~\ref{prop:Morse-Smale-on-2d} for an equivalent definition in two dimensions).
\end{enumerate}
\end{defn}

We now recall some basic topological properties of Neumann domains.

\begin{thm}
\label{thm:topological-properties-manifolds}\cite[Thm.~1.4]{band:2016}
Let $f$ be a Morse function with a non-empty set of saddle points. Let $\bs p\in\Min(f),\,\bs q\in\Max(f)$ with
$W^{s}\left(\bs p\right)\cap W^{u}\left(\bs q\right)\neq\emptyset$,
 and let $\Omega$ be a connected component of $W^{s}\left(\bs p\right)\cap W^{u}\left(\bs q\right)$,
i.e. a Neumann domain. The following properties hold.
\begin{enumerate}
\item The Neumann domain $\Omega$ is a simply connected open set.\label{enu:thm-topological-properties-manifolds-1}
\item All critical points of $f$ belong to the Neumann line set.\label{enu:thm-topological-properties-manifolds-2}
\item The extremal points which belong to $\overline{\Omega}$ are exactly
$\bs p$ and $\bs q$.\label{enu:thm-topological-properties-manifolds-3}
\item If $f$ is a Morse--Smale function, then $\partial\Omega$ consists
of Neumann lines connecting saddle points with $\bs p$ or $\bs q$.
In particular, $\partial\Omega$ contains either one or two saddle
points. \label{enu:thm-topological-properties-manifolds-4}
\item If $c\in\rz$ is such that $f(\bs p)<c<f(\bs q)$, then $\overline{\Omega}\cap f^{-1}\left(c\right)$
is a smooth, non-self intersecting one-dimensional curve in $\overline{\Omega}$,
with its two boundary points lying on $\partial\Omega$.\label{enu:thm-topological-properties-manifolds-5}
\end{enumerate}
\end{thm}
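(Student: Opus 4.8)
The plan is to work directly with the gradient flow $\varphi$ and elementary Morse theory, exploiting that $f$ is strictly monotone along every non-constant flow line: $\frac{\ud}{\ud t}f(\varphi(t,x)) = -|\grad f(\varphi(t,x))|^2 \le 0$, with strict inequality away from the critical points. First I would show that $\Omega$ contains no critical point. Any $x\in\Omega\subset W^{s}(\bs p)\cap W^{u}(\bs q)$ satisfies $\varphi(t,x)\to\bs p$ as $t\to+\infty$ and $\varphi(t,x)\to\bs q$ as $t\to-\infty$; were $x$ critical, its flow line would be constant, forcing $\bs p = x = \bs q$, which is impossible. The identical argument applies to every Neumann domain, so no critical point lies in any Neumann domain. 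Since $M$ is the disjoint union of the Neumann domains and $\Nd$ (and $\Nd\neq\emptyset$ here, by hypothesis), every critical point lies in $\Nd$, which is the second claim. Monotonicity along flow lines also yields $f(\bs p)<f(x)<f(\bs q)$ for every $x\in\Omega$, so each $c\in(f(\bs p),f(\bs q))$ is a regular value of $f|_{\Omega}$.

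Next I would build product coordinates to obtain the first and fifth claims. Since $\Omega$ is flow-invariant and free of critical points, fix a regular value $c\in(f(\bs p),f(\bs q))$ and set $L:=\overline{\Omega}\cap f^{-1}(c)$. Each flow line in $\Omega$ crosses the level $c$ exactly once, so the map $x\mapsto(\pi(x),f(x))$, where $\pi(x)$ records the crossing point, should be a diffeomorphism of $\Omega$ onto $(L\cap\Omega)\times(f(\bs p),f(\bs q))$. This identifies $\Omega$ with a product, hence shows it is simply connected (indeed a topological disk), proving the first claim. Because $\Omega$ is connected, $L\cap\Omega$ is a connected one-dimensional manifold, and it cannot be a circle: such a loop would bound a disk in the simply connected $\Omega$, on which $f$ would attain an interior extremum, contradicting the absence of critical points. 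Therefore $L\cap\Omega$ is a single smooth, non-self-intersecting arc whose closure adds two endpoints on $\partial\Omega$, which is the fifth claim.

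For the third and fourth claims I would invoke the Morse--Smale transversality hypothesis, in the two-dimensional form of Lemma~\ref{prop:Morse-Smale-on-2d}. The boundary $\partial\Omega$ lies in $\Nd$ and is assembled from gradient flow lines; using the hyperbolicity of the saddles and the $\lambda$-lemma to control the flow near them, one shows that $\partial\Omega$ is a finite union of saddle separatrices together with the critical points they join, each separatrix running from a saddle down to a minimum or up to a maximum. Combining the disk structure from the previous step with a local count of the flow sectors at each boundary saddle should force exactly one or two saddles on $\partial\Omega$, giving the fourth claim. Finally, since every point of $\Omega$ flows forward to $\bs p$ and backward to $\bs q$, continuity of the flow up to the boundary shows that $\bs p$ and $\bs q$ are the only minimum and maximum in $\overline{\Omega}$, which is the third claim.

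The hard part will be the boundary analysis behind the third and fourth claims. One must rule out wild accumulation of interior flow lines onto $\partial\Omega$ and pin down the local picture at a boundary saddle, where two separatrices can meet tangentially and produce a cusp. Handling this rigorously rests on the transversality assumption and on the hyperbolic normal form at a saddle, and it is precisely the mechanism that renders $\partial\Omega$ possibly non-smooth -- the very feature that complicates the functional-analytic arguments in the rest of the paper.
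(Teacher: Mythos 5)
First, a point of comparison: this theorem is not proved in the paper at all --- it is quoted verbatim from \cite[Thm.~1.4]{band:2016} --- so your proposal can only be judged on its own terms, not against an in-paper argument.

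The most serious gap is in your treatment of claims (1) and (5), where the reasoning is circular. The flow/level-set coordinates do identify $\Omega$ diffeomorphically with $\big(\Omega\cap f^{-1}(c)\big)\times\big(f(\bs p),f(\bs q)\big)$, but this product is simply connected only if the fibre $\Omega\cap f^{-1}(c)$ is an arc rather than a circle --- and to exclude the circle you appeal to the simple connectivity of $\Omega$, which is exactly what the product structure was supposed to establish. The circle case is not vacuous: for a function with exactly two critical points on the sphere, $W^{s}(\bs p)\cap W^{u}(\bs q)$ is an open cylinder. What rules it out here is precisely the hypothesis $\Sd(f)\neq\emptyset$, which your argument for claims (1) and (5) never invokes. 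A correct route would be: if the fibre were a circle, then (using its compactness and the openness of the basins of $\bs p$ and $\bs q$) the level circles converge uniformly to $\bs p$ and $\bs q$, so $\overline{\Omega}=\Omega\cup\{\bs p,\bs q\}$ is a compact surface without boundary inside $M$; by invariance of domain it is open as well as closed in $M$, hence equals $M$, forcing $f$ to have no critical points other than $\bs p$ and $\bs q$ and contradicting the existence of a saddle.

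Second, claim (4) is described rather than proved. The assertions that $\partial\Omega$ is a \emph{finite} union of saddle separatrices joined to $\bs p$ or $\bs q$ (i.e.\ that no wild accumulation of flow lines occurs, so the closure in the definition of $\Nd$ contributes nothing extra to $\partial\Omega$), and that a ``local count of flow sectors should force exactly one or two saddles,'' are the entire content of claim (4); naming the $\lambda$-lemma and transversality does not carry out the argument. For instance, nothing you wrote excludes a boundary cycle of the form $\bs q,\bs r_{1},\bs p,\bs r_{2},\bs q,\bs r_{3},\bs p,\bs r_{4},\bs q$ with four distinct saddles, which is the kind of configuration the count must eliminate. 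Your claim (3) argument also needs sharpening: ``continuity of the flow up to the boundary'' is not the operative mechanism; rather, the basin $W^{s}(\bs p')$ of any minimum is \emph{open}, so a second minimum $\bs p'\in\overline{\Omega}$ would have its basin meet $\Omega\subset W^{s}(\bs p)$, contradicting the disjointness of basins --- but that is a fixable imprecision, whereas the two issues above are genuine gaps. Claims (2) and the positive parts of (1) and (5) are sound, modulo your (legitimate) use of the partition result \cite[Prop.~1.3]{band:2016}.
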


Parts \eqref{enu:thm-topological-properties-manifolds-2} and \eqref{enu:thm-topological-properties-manifolds-4} of this theorem motivate us to examine individual Neumann lines and their
connectivity to the critical points of $f$.
\begin{defn}
\label{def: Neumann_line_and_degree_of_critical_pt}~
\begin{enumerate}
\item A \emph{Neumann line} is the closure of a connected component
of $W^{s}(\bs r) \setminus\{\bs r\}$ or $W^{u}(\bs r)\setminus\{\bs r\}$ for some $\bs r\in\Sd(f)$.
\item For a critical point $\bs c$ of $f$, we define its degree, $\deg(\bs c)$,
to be the number of Neumann lines connected to $\bs c$.
\end{enumerate}
\end{defn}

Each Neumann line is thus the closure of a gradient flow line, connecting a saddle point to another critical point. In fact, the connectivity of Neumann lines is directly related to the Morse--Smale property of $f$.

\begin{lem}[\cite{abbe:2018}]
\label{prop:Morse-Smale-on-2d}
On a two-dimensional manifold, a Morse
function is Morse--Smale if and only if there is no Neumann line connecting
two saddle points.
\end{lem}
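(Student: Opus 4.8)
The plan is to prove the equivalence in Lemma~\ref{prop:Morse-Smale-on-2d} by analyzing the possible transverse and non-transverse intersections of stable and unstable manifolds on a surface, exploiting the fact that in dimension two the stable and unstable manifolds have very constrained dimensions. Recall that for a Morse function on a $2$-manifold, the index of a critical point is $0$ (minimum), $1$ (saddle), or $2$ (maximum), and the unstable manifold $W^u(\bs c)$ has dimension equal to the index while $W^s(\bs c)$ has dimension equal to $2-\ind(\bs c)$. Thus a saddle $\bs r$ has both $\dim W^s(\bs r) = 1$ and $\dim W^u(\bs r) = 1$, whereas minima and maxima have one of these manifolds being a single point and the other being two-dimensional. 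The Morse--Smale condition requires that all intersections $W^s(\bs p)\cap W^u(\bs q)$ be transverse; the key point is to determine for which pairs of critical points transversality can actually fail.

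First I would observe that the only way transversality can fail is when both participating manifolds have dimension $1$, since if one of the manifolds is $0$-dimensional (a point) or $2$-dimensional (an open set) the transversality condition is either automatic or vacuous in the relevant sense. More precisely, a non-transverse intersection in a $2$-manifold requires two one-dimensional submanifolds meeting with coincident tangent lines at a common point. The only critical points whose stable or unstable manifolds are one-dimensional are the saddles. I would therefore argue that a violation of the Morse--Smale condition must occur at an intersection of a one-dimensional manifold $W^u(\bs r_1)$ with a one-dimensional manifold $W^s(\bs r_2)$ for saddles $\bs r_1,\bs r_2$. Since a gradient flow line lying in such an intersection limits backward to $\bs r_1$ and forward to $\bs r_2$, this is precisely a Neumann line connecting two saddle points (using Definition~\ref{def: Neumann_line_and_degree_of_critical_pt}).

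For the converse direction, I would show that if there is a Neumann line joining two saddles $\bs r_1$ and $\bs r_2$, then along that connecting flow line the manifolds $W^u(\bs r_1)$ and $W^s(\bs r_2)$ coincide (both being one-dimensional and containing the same trajectory), so their tangent spaces at any point of the line are equal one-dimensional subspaces rather than spanning the whole tangent plane. Hence the intersection is non-transverse, and $f$ fails to be Morse--Smale. Conversely, if $f$ is Morse--Smale, no such shared one-dimensional trajectory can exist, ruling out saddle-to-saddle connections. The remaining task is to verify carefully that intersections involving extrema are always automatically transverse: when one manifold is open (dimension $2$) transversality is immediate, and when one manifold is a point the condition reduces to the point lying in the other manifold, with no tangency obstruction.

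The main obstacle I anticipate is making the dimension-counting argument fully rigorous at the level of tangent spaces, particularly ensuring that the transversality condition $T_x W^s(\bs p) + T_x W^u(\bs q) = T_x M$ is correctly interpreted in the degenerate cases where a manifold is zero-dimensional or two-dimensional. One must be careful that the stable and unstable manifolds are genuinely embedded submanifolds of the stated dimensions (this follows from the stable manifold theorem applied at hyperbolic critical points of the gradient flow, which holds because $f$ is Morse) and that a shared trajectory really forces tangent-line coincidence everywhere along it. I expect the subtle point to be confirming that saddle-to-saddle connections are the \emph{only} source of non-transversality, which requires checking that flow lines between an extremum and a saddle, or between two extrema, never produce a tangency; this is where the specific low-dimensional structure of the gradient flow on a surface does the essential work.
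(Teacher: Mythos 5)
The paper itself does not prove this lemma: it is quoted from \cite{abbe:2018}, so there is no internal proof to compare against. Your dimension-counting strategy is the standard argument and its overall structure is sound: on a surface the only stable/unstable manifolds of dimension one are those of saddles, a shared connecting orbit forces the two tangent lines to coincide (hence non-transversality), and conversely a saddle--saddle connecting orbit is exactly the interior of a Neumann line joining two saddles by Definition~\ref{def: Neumann_line_and_degree_of_critical_pt}, so Morse--Smale fails precisely when such a line exists.

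However, one step as you justify it is actually wrong, although easily repaired. You claim that when one of the manifolds is $0$-dimensional ``the condition reduces to the point lying in the other manifold, with no tangency obstruction.'' That is backwards: if, say, $W^s(\bs p)=\{\bs p\}$ (with $\bs p$ a maximum) were to meet a $1$-dimensional $W^u(\bs q)$ (with $\bs q$ a saddle) at $\bs p$, then $T_{\bs p}W^s(\bs p)+T_{\bs p}W^u(\bs q)$ would be $1$-dimensional and the intersection would be \emph{non}-transverse. The reason this configuration never arises is dynamical, not infinitesimal: critical points are fixed points of the flow, so a critical point $\bs p$ can belong to $W^s(\bs q)$ or $W^u(\bs q)$ only when $\bs p=\bs q$, and in that case the two tangent spaces at $\bs p$ are the stable and unstable eigenspaces of the Hessian, which are complementary by non-degeneracy, so transversality holds there. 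You should also dispose of the homoclinic case $\bs r_1=\bs r_2$ in your saddle--saddle analysis: a nonconstant orbit from a saddle back to itself is impossible because $f$ is strictly decreasing along nonconstant gradient flow lines. With these two observations inserted, your case analysis is complete and the proof goes through.
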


The following properties of Neumann lines will be used throughout the rest of the paper.

\begin{prop}
\label{prop:angles-at-critical-pts} Let $f$ be a Morse function
and $\Omega$ one of its Neumann domains.
\begin{enumerate}
\item \label{enu:prop-angles-at-critical-pts-1} If $\boldsymbol{c}$ is
a saddle point of $f$, then $\deg(\bs c)=4$ and the angle between
any two adjacent Neumann lines which meet at $\boldsymbol{c}$ is
$\tfrac{\pi}{2}$.
\item \label{enu:prop-angles-at-critical-pts-2} If $\boldsymbol{c}$ is
an extremal point of $f$ whose Hessian is not proportional to the metric $g$, then
 any two Neumann lines meet at $\boldsymbol{c}$ with
angle $0$, $\tfrac{\pi}{2}$ or $\pi$.
\item \label{enu:prop-angles-at-critical-pts-3} Let $\bs c$ be an intersection
point of a nodal line and a Neumann line. If $\bs c$ is a
saddle point, then the angle between those lines is $\tfrac{\pi}{4}$.
Otherwise, the angle is $\tfrac{\pi}{2}$.
\end{enumerate}
\end{prop}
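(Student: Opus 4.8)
The plan is to work in geodesic normal coordinates centered at the relevant critical point $\bs c$, so that $g(\bs c)$ is the Euclidean inner product and the Christoffel symbols vanish at $\bs c$. In these coordinates the linearization of the negative gradient flow \eqref{eq:flow-1} at $\bs c$ is $\dot\xi = -H\xi$, where $H := \Hess f(\bs c)$ coincides with the ordinary Hessian of $f$; it is a symmetric matrix with an orthonormal eigenbasis $v_1,v_2$ and eigenvalues $\lambda_1,\lambda_2$. Since angles at $\bs c$ are measured with respect to $g(\bs c)$, which is Euclidean here, it suffices to track the tangent direction at $\bs c$ of each curve and compute Euclidean angles. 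I will repeatedly use that a Neumann line is the closure of a component of $W^s(\bs r)\setminus\{\bs r\}$ or $W^u(\bs r)\setminus\{\bs r\}$, hence a gradient flow line, so its tangent direction at $\bs c$ is the limiting direction of approach of a trajectory of $-\grad f$.

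For part \eqref{enu:prop-angles-at-critical-pts-1}, the saddle is a hyperbolic fixed point with $\lambda_1>0>\lambda_2$. The stable manifold theorem gives that $W^s(\bs c)$ and $W^u(\bs c)$ are smooth one-dimensional curves through $\bs c$, tangent respectively to $v_1$ (the increasing direction, with eigenvalue $\lambda_1>0$) and to $v_2$. Deleting $\bs c$ splits each into two branches, so four Neumann lines meet at $\bs c$ with tangent directions $\pm v_1,\pm v_2$, whence $\deg(\bs c)=4$. Because $H$ is symmetric we have $v_1\perp v_2$, so the four directions, read cyclically, are mutually at right angles and adjacent Neumann lines meet at $\tfrac{\pi}{2}$.

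For part \eqref{enu:prop-angles-at-critical-pts-2}, an extremal point is a hyperbolic sink (minimum) or source (maximum); I treat the sink, the source following by time reversal. The hypothesis that $H$ is not proportional to $g(\bs c)$ means $\lambda_1\neq\lambda_2$, so $\bs c$ is a genuine node rather than a star node. The key step, which I expect to be the main obstacle, is the ODE fact that every trajectory limiting to such a node does so tangent to an eigendirection of $H$: all but an exceptional one-parameter family approach tangent to the slow eigendirection, while the exceptional trajectories, comprising the strong stable manifold, approach tangent to the fast one. Justifying this requires invariant-manifold/node normal-form arguments together with a check that the higher-order terms of the flow do not alter the leading tangent direction, and it is precisely here that the distinctness $\lambda_1\neq\lambda_2$ is needed (for $\lambda_1=\lambda_2$ every direction occurs). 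Granting this, each Neumann line at $\bs c$ is tangent to one of $\pm v_1,\pm v_2$, and since $v_1\perp v_2$ the angle between any two is $0$, $\tfrac{\pi}{2}$ or $\pi$.

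For part \eqref{enu:prop-angles-at-critical-pts-3}, $f$ is an eigenfunction, $\Delta f=\lambda f$. If $\bs c$ is a regular point the argument is immediate: a Neumann line is a gradient flow line, so its tangent is parallel to $\grad f(\bs c)\neq 0$, whereas a nodal line lies in the level set $f^{-1}(0)$ and so is orthogonal to $\grad f(\bs c)$; the two meet at $\tfrac{\pi}{2}$. A non-saddle critical point cannot lie on a nodal line, since there $f$ does not change sign, so the only remaining case is $\bs c$ a saddle. Then $f(\bs c)=0$ forces $\Delta f(\bs c)=\lambda f(\bs c)=0$, i.e. $\tr H=\lambda_1+\lambda_2=0$, so in the eigenbasis $H=\lambda_1\,\mathrm{diag}(1,-1)$. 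By part \eqref{enu:prop-angles-at-critical-pts-1} the Neumann lines are tangent to the axes $v_1,v_2$, while the nodal line, which near a critical zero of an eigenfunction is tangent to the zero cone $\{w:w^\top H w=0\}=\{w_1=\pm w_2\}$ of the leading quadratic, is tangent to the diagonals. Each axis makes angle $\tfrac{\pi}{4}$ with each diagonal, giving the claim.
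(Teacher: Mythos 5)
Your treatments of parts \eqref{enu:prop-angles-at-critical-pts-1} and \eqref{enu:prop-angles-at-critical-pts-3} are sound and essentially coincide with the arguments in the references that the paper cites for those parts (the stable manifold theorem at a hyperbolic saddle; gradient/level-set orthogonality at regular points; the description of the nodal set at a saddle zero via the null cone of the Hessian, which you assert without proof but which is standard). The paper itself only supplies a proof of part \eqref{enu:prop-angles-at-critical-pts-2}, and that is exactly where your proposal has a genuine gap.

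For part \eqref{enu:prop-angles-at-critical-pts-2} you reduce everything to the claim that every trajectory of $-\grad f$ limiting to a non-degenerate extremum with distinct Hessian eigenvalues arrives tangent to an eigendirection of $\Hess f(\bs c)$, and then you write ``granting this'' while conceding that the justification requires ``a check that the higher-order terms of the flow do not alter the leading tangent direction.'' That check is not a routine verification to be granted: it is the entire content of part \eqref{enu:prop-angles-at-critical-pts-2}, and it is precisely the point on which the previously published proofs are incomplete. As the paper explains in Remark~\ref{rem:proof}, substituting the Taylor expansion of $f$ into the flow equation gives $\dot\xi = -\Hess f(\bs c)\,\xi + \mathcal{O}(|\xi|^2)$, but the possible coupling of the higher-order terms prevents one from concluding that the flow is approximated by $\rme^{-t\Hess f(\bs c)}\xi_0$, so the tangency claim does not follow from the linearization alone. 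The paper closes this gap by invoking Hartman's two-dimensional $C^1$-linearization theorem (Proposition~\ref{prop: Hartman}): there is a $C^1$-diffeomorphism $\Phi$ with $D\Phi(\bs c)=\Id$ conjugating the gradient flow to its linearization, so tangent directions at $\bs c$ --- and hence angles, computed in normal coordinates where $g_{ij}(\bs c)=\delta_{ij}$ --- are preserved, which reduces the statement to the elementary computation for the linear flow carried out in the cited references. To complete your proof you would need either this theorem or an equivalent node-tangency theorem for $C^2$ planar vector fields, stated and cited (or proved); as written, the central step of the only nontrivial part is assumed rather than established.
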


\begin{rem}
More generally, if $\boldsymbol{c}$ is a saddle point and there exist coordinates $(x,y)$ near $\boldsymbol{c}$ in which $f$ is given by the homogeneous harmonic polynomial $\operatorname{Re} (x+iy)^k$, then $\deg(\bs c)=2k$. For a non-degenerate saddle the existence of such coordinates (with $k=2$) is an immediate consequence of the Morse lemma, so we obtain Proposition \ref{prop:angles-at-critical-pts}\eqref{enu:prop-angles-at-critical-pts-1} as a special case of this remark. Sufficient conditions for $f$ to be written in this form are given in \cite[Lem. 2.4]{C76}.
\end{rem}

The first and third parts of Proposition \ref{prop:angles-at-critical-pts}
were proved in \cite{fulling:14}, \cite[Thm.~4.2]{Ban:2004} and
\cite[Prop.~4.1]{abbe:2018}. The second part of the proposition
is proved below (see Remark~\ref{rem:proof} after the proof), using the following version of Hartman's theorem, which will also be used in the proofs of 
Lemma \ref{c1neumannline} and Proposition \ref{extens} to give a canonical description of the boundary 
of a Neumann domain near a cusp point.
\begin{prop}
\cite{Hartman:1960}
\label{prop: Hartman} Let $E$ be
an open neighbourhood of $\bs p\in\rz^{2}$. Suppose $F\in C^{2}(E,\rz^{2})$,
and let $\varphi$ be the flow of
the nonlinear system $\partial_{t}\varphi(t, x)=F(\varphi(t,x))$.
Assume that $F(\bs p)=\bs 0$ and the Jacobian $DF(\bs p)$ is
diagonalizable and its eigenvalues have non-zero real part. Then,
there exists a $C^{1}$-diffeomorphism $\Phi:U\rightarrow V$ of an open
neighbourhood $U$ of $\bs p$ onto an open neighbourhood $V$ of
the origin, such that $D\Phi(\bs p)=\Id$ and for each $ x\in U$ the flow line through
$\bs p$ is mapped by $\Phi$ to
\begin{equation}
\label{bettermap}
	\Phi(\varphi(t, x)) = \rme^{DF(\bs p)t}\Phi(x)
\end{equation}
for small enough $t$ values.
\end{prop}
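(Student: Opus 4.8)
The plan is to obtain the proposition as a short corollary of Hartman's $C^{1}$ linearization theorem for planar flows, supplying only an elementary normalization of the derivative at $\bs p$. Write $L := DF(\bs p)$. The hypotheses say precisely that $\bs p$ is a hyperbolic equilibrium of $\varphi$: indeed $F(\bs p)=\bs 0$ makes $\bs p$ a fixed point, and $L$ is diagonalizable with eigenvalues off the imaginary axis. Hartman's theorem, valid for $C^{2}$ vector fields in the plane, then produces a $C^{1}$-diffeomorphism $\Psi$ from a neighbourhood $U$ of $\bs p$ onto a neighbourhood of the origin that conjugates the flow to its linearization,
\[
	\Psi(\varphi(t,x)) = \rme^{Lt}\,\Psi(x),
\]
for $x\in U$ and $|t|$ small. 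This is already \eqref{bettermap} with $\Psi$ in place of $\Phi$; the only thing left to arrange is the normalization $D\Psi(\bs p)=\Id$.

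To normalize, I would first note that $\Psi(\bs p)=\bs 0$: since $\varphi(t,\bs p)=\bs p$ for all $t$, evaluating the conjugacy at $x=\bs p$ gives $\Psi(\bs p)=\rme^{Lt}\Psi(\bs p)$, and since $L$ is invertible the only vector fixed by every $\rme^{Lt}$ is the origin. Next set $A := D\Psi(\bs p)$, which is invertible. Differentiating the conjugacy in $x$ at $x=\bs p$ and using the variational identity $D_{x}\varphi(t,\bs p)=\rme^{Lt}$ (the first-variation equation along the equilibrium reduces to $\dot\Xi = L\Xi$, $\Xi(0)=\Id$), one obtains $A\,\rme^{Lt}=\rme^{Lt}A$ for all $t$; hence $A$ commutes with $L$ and with every $\rme^{Lt}$. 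Defining $\Phi := A^{-1}\circ\Psi$ then gives a $C^{1}$-diffeomorphism onto a neighbourhood $V := A^{-1}(\Psi(U))$ of the origin with $D\Phi(\bs p)=A^{-1}A=\Id$, and since $A^{-1}$ also commutes with $\rme^{Lt}$,
\[
	\Phi(\varphi(t,x)) = A^{-1}\rme^{Lt}\Psi(x) = \rme^{Lt}A^{-1}\Psi(x) = \rme^{Lt}\Phi(x),
\]
which is exactly \eqref{bettermap}.

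The only genuinely hard ingredient is the existence of a linearizing map that is $C^{1}$, rather than the mere homeomorphism supplied by the Hartman--Grobman theorem. This is where both the restriction to two dimensions and the $C^{2}$ regularity of $F$ are indispensable --- $C^{1}$ linearization can fail in $\rz^{3}$ even for analytic fields, owing to resonances --- so I would treat this step as the cited black box \cite{Hartman:1960} rather than reprove it. Everything else, as above, is a formal manipulation of the conjugacy relation. I note in passing that in the intended application $F=-\grad f$, so $L$ is self-adjoint with respect to $g$ and hence automatically diagonalizable, and hyperbolicity holds at every nondegenerate critical point; thus the hypotheses are met without further work.
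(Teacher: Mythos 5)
Your proposal is correct and takes essentially the same route as the paper: the paper gives no proof of this proposition at all, stating it as a quoted result of \cite{Hartman:1960} and relying on precisely the $C^{1}$ planar linearization theorem you invoke as a black box. Your supplementary normalization step --- showing $\Psi(\bs p)=\bs 0$, deriving the commutation $D\Psi(\bs p)\,\rme^{Lt}=\rme^{Lt}\,D\Psi(\bs p)$ from the variational equation, and setting $\Phi = D\Psi(\bs p)^{-1}\circ\Psi$ to achieve $D\Phi(\bs p)=\Id$ --- is sound and harmlessly strengthens the citation, since it shows the derivative normalization follows formally from any linearizing $C^{1}$-diffeomorphism.
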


\begin{rem}
The textbook version of Hartman's theorem in $n$ dimensions (see, for instance, \cite[p. 120]{Perko:2001}) only guarantees the existence of a homeomorphism $\Phi$. For $n=2$, the proposition above guarantees that $\Phi$ is a $C^1$-diffeomorphism, but for $n > 2$ further assumptions on the Jacobian are required to obtain this additional regularity. For instance, it suffices to assume that all of the eigenvalues of $DF(\bs p)$ are in the same (left or right) half plane; see \cite[p. 127]{Perko:2001}. That version of the theorem would be sufficient for our purposes, since we only apply Proposition \ref{prop: Hartman} at non-degenerate extrema, where all eigenvalues have the same sign. However, it is interesting to note that Proposition \ref{prop: Hartman} also applies at saddle points in two dimensions.
\end{rem}

\begin{proof}[Proof of Proposition \ref{prop:angles-at-critical-pts}\eqref{enu:prop-angles-at-critical-pts-2}]
Let $\boldsymbol{c}$ be an extremal point of $f$ whose Hessian
is not proportional to $g$. Since $\Hess f(\bs c)$ is non-degenerate, 
both eigenvalues
of $\Hess f(\bs c)$ are either strictly positive or strictly negative. We choose normal coordinates in an open neighbourhood $\widetilde{E}$ of $\bs c$,
with respect to which $\widetilde{E}$ is represented by an open subset $E\subset\R^{2}$, $\bs c$ corresponds to the origin $\bs{0} \in \R^2$, and $g_{ij}(\bs 0)=\delta_{ij}$.

We now apply Proposition \ref{prop: Hartman} to $F=-\grad f$. Since $DF(\bs 0)=-\Hess f(\bs 0)$ is diagonalizable
and has nonzero eigenvalues,  there exist $U\subset E$ and $V\subset\R^{2}$,
both containing the origin, and a $C^1$-diffeomorphism $\Phi:U\rightarrow V$, such that the
gradient flow lines are mapped by $\Phi$ to the flow lines $\rme^{-t\Hess f(\bs 0)}\Phi(x)$ of the linearized system. In \cite[Thm.~3.1]{fulling:14},
\cite[Prop.~4.1]{abbe:2018} it was shown that the angle between such 
flow lines at an extremal point is either $0$, $\tfrac{\pi}{2}$ or $\pi$, under the assumption 
that $\Hess f(\bs 0)$ is not a scalar matrix.
This assumption holds, as the Hessian is not proportional to the metric and we have chosen coordinates with respect to
which $g(\bs 0)$ is the identity.

It is left to relate the meeting angle between the gradient flow lines in $M$ and the corresponding  flow lines $\rme^{-t\Hess f(\bs 0)}\Phi(x)$ in $V$.  Since the tangent map $D\Phi(\bs 0) : T_{\bs 0} U \to T_{\bs 0} V$ is the identity, and $g_{ij}(\bs 0) = \delta_{ij}$, the meeting angle of any two curves at $\bs 0$ is preserved by
$\Phi$, hence this angle is either $0$, $\tfrac{\pi}{2}$ or $\pi$. This completes the proof.
\end{proof}

\begin{rem}
\label{rem:proof}
The argument for Proposition \ref{prop:angles-at-critical-pts}\eqref{enu:prop-angles-at-critical-pts-2} given in \cite[Prop.~4.1]{abbe:2018} is incomplete
and hence we have supplied a complete proof here.
In particular, the Taylor expansion argument used in the
proofs of \cite[Thm.~3.1]{fulling:14} and \cite[Prop.~4.1]{abbe:2018}
does not suffice here.
Substituting the Taylor expansion of $f$ in \eqref{eq:flow-1}
gives
\begin{equation}
\begin{pmatrix}x'(t)\\
y'(t)
\end{pmatrix}=-\Hess f(\bs c)\cdot\begin{pmatrix}x(t)\\
y(t)
\end{pmatrix}+\mathcal{O}\left(\left\Vert \left(x(t),y(t) \right)\right\Vert _{\R^{2}}^{2}\right), \label{eq: Taylor_of_flow_ODE}
\end{equation}
but this does not allow us to conclude that the flow may be approximated
by
\[
\begin{pmatrix}x(t)\\
y(t)
\end{pmatrix}\approx\rme^{-t\Hess f(\bs c)}\cdot\begin{pmatrix}x(0)\\
y(0)
\end{pmatrix},
\]
due to the possible coupling of higher order terms in (\ref{eq: Taylor_of_flow_ODE}).
\end{rem}

From Proposition \ref{prop:angles-at-critical-pts}\eqref{enu:prop-angles-at-critical-pts-2}
we see that the boundary of a Neumann domain may possess a cusp (when the meeting angle is $0$)
and so it can fail to be Lipschitz continuous. Furthermore, it may
even fail to be of class $C$. Recall that a boundary of a domain
is said to be of class $C$ if it can be locally represented as the
graph of a continuous function; alternatively, if the domain has the
segment property (see \cite{Evans:1987} 
or \cite{MP01} for details). To demonstrate that this is a subtle
property, we bring as an example the domains
\begin{equation}
\begin{split}
	\Omega_1 &= \big\{(x,y)\in\mathbb{R}^{2}: \tfrac12 x^2 < y<x^{2}, \ 0 < x < 1\big\},\label{nonCexample} \\
	\Omega_2 &= \big\{(x,y)\in\mathbb{R}^{2}:- x^2<y<x^{2}, \ 0 < x < 1 \big\},
\end{split}
\end{equation}
which are shown in Figure \ref{fig:nonC}. The domain $\Omega_1$ does not satisfy
the segment property at the origin, and hence is not of class $C$,
even though its boundary is the union of two smooth curves. On the other hand, $\Omega_2$ (which contains $\Omega_1$) \emph{is} of class $C$. This example will be important later, in the proof of Proposition~\ref{extens}.

\begin{figure}[ht]
\begin{center}
\begin{tikzpicture}
\begin{axis}[xmin=-0.05,ymin=-1.05,xmax=1.05,ymax=1.05, samples=50, yticklabels=\empty, xticklabels=\empty]
  \addplot[domain=0:1, very thick] (x,x*x);
  \addplot[domain=0:1, very thick] (x,0.5*x*x);
    \addplot[domain=0.5:1, very thick] (1,x);
\end{axis}
\end{tikzpicture}
\begin{tikzpicture}
\begin{axis}[xmin=-0.05, ymin=-1.05, xmax=1.05, ymax=1.05, samples=50, yticklabels=\empty, xticklabels=\empty]
  \addplot[domain=0:1, very thick] (x,x*x);
  \addplot[domain=0:1, very thick] (x,-x*x);
  \addplot[domain=-1:1, very thick] (1,x);
\end{axis}
\end{tikzpicture}

\caption{The regions $\Omega_1$ (left) and $\Omega_2$ (right) defined in \eqref{nonCexample} both have a cusp at the origin. However, $\Omega_1$ is not of class $C$, whereas $\Omega_2$ is.}
\label{fig:nonC}
\end{center}
\end{figure}
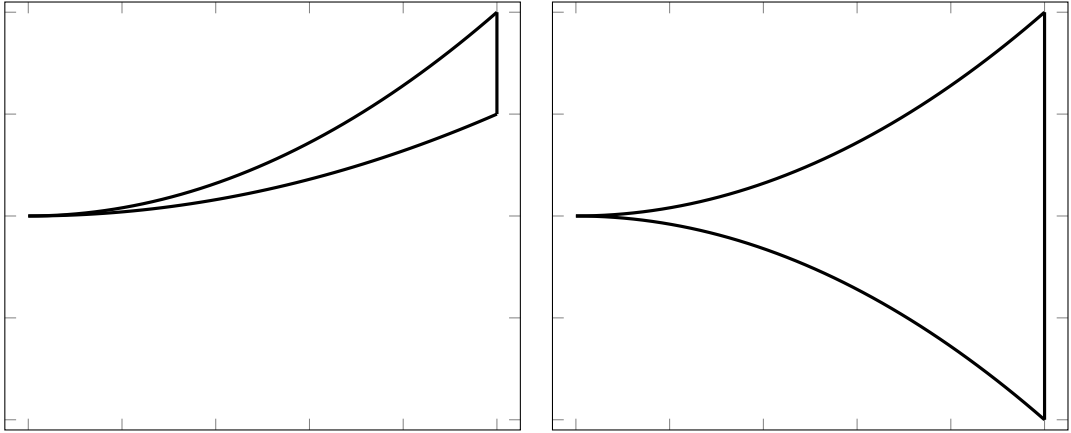

We add that there is very little known in general regarding the asymptotic
behavior of Neumann lines near cusps. In particular, methods to treat
cusps in a spectral theoretic context as in, e.g., \cite{simon:92,flamencour:20,band2017ground}
have to be generalized for our purpose.

We end this section by examining Theorem \ref{thm:topological-properties-manifolds} and its 
implications for the structure of Neumann domains. By the statement of the
theorem, the boundary of a Neumann domain always contains a maximum
and a minimum, but no other extrema.  It follows that each Neumann domain 
must belong to one of following two types (illustrated in Figure \ref{fig:Neumann-domains-schematic}):
\begin{itemize}
\item a \emph{regular Neumann domain} has on its boundary a maximum and a minimum,
each of degree at least two (see Definition \ref{def: Neumann_line_and_degree_of_critical_pt});
\item a \emph{cracked Neumann domain} has on its boundary an extremal point which
is of degree one.
\end{itemize}

Moreover, since the boundary is made up of Neumann 
lines, it must contain at least one saddle point. If $f$ is Morse--Smale the boundary contains at most two saddle points, by Theorem \ref{thm:topological-properties-manifolds}\eqref{enu:thm-topological-properties-manifolds-4}, but for a general Morse function it is possible to have more. The possible existence of additional saddle points is irrelevant for our analysis, however, since the boundary is Lipschitz near these points by Proposition \ref{prop:angles-at-critical-pts}\eqref{enu:prop-angles-at-critical-pts-1}.

Numerical observations suggest that generic Neumann domains are
regular. However, it is not hard to construct Morse functions having cracked Neumann domains; see Appendix~\ref{app:typeii}. Theorems \ref{thm:discrete} and \ref{thm:restriction} apply to
both types of domains, but in the proofs we need to pay careful attention
to cracked domains. In particular, a cracked Neumann domain is not of class $C$, as the domain lies on both sides of its boundary.

\begin{figure}[ht]
\centering{}\includegraphics[width=0.8\textwidth]{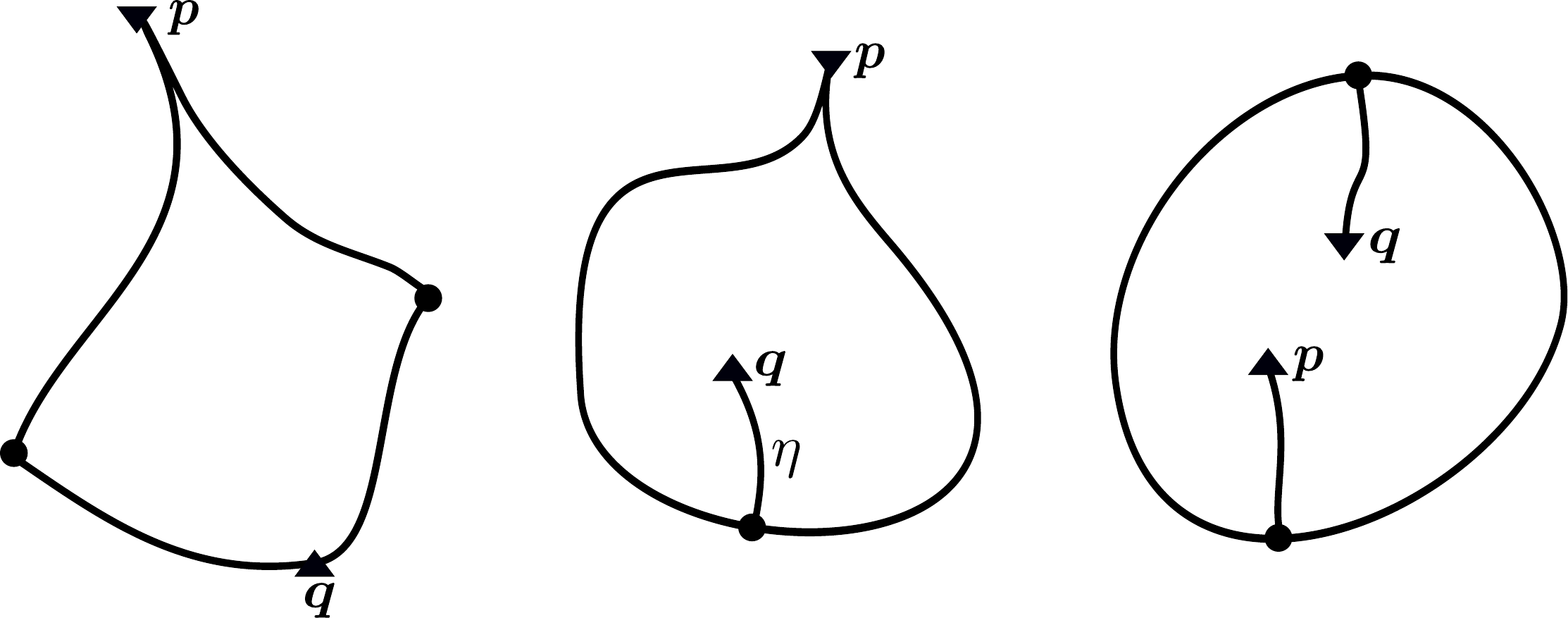} \caption{Possible types of Neumann domains for a Morse function: regular (left); cracked (center); and doubly-cracked (right). Saddle points are represented by balls, maxima by triangles pointing upwards and vice versa for minima. If $f$ is Morse--Smale, its Neumann domains must look like one of the first two examples, with either one or two saddle points on the boundary.
For the cracked domain shown in the center,
$\eta$ is the only Neumann line connected to $\protect\bs q$,
hence $\deg(\protect\bs q)=1$. If $f$ is not Morse--Smale, its Neumann domains can have additional saddle points on the boundary, and can have both extremal points of degree one, as shown on the right. (This last example has a Neumann line connecting two saddle points, which is not possible if $f$ is Morse--Smale, by Lemma \ref{prop:Morse-Smale-on-2d}.)
}
\label{fig:Neumann-domains-schematic}
\end{figure}

\begin{rem}
\label{rem:notC}
In summary, a Neumann domain may fail to be of class $C$ for two reasons: 1) a cusp on the boundary; or 2)  a crack in the domain, i.e. a Neumann line contained in the interior of $\overline\Omega$. These are the main technical obstacles to be  overcome in proving Theorems \ref{thm:discrete} and \ref{thm:restriction}.
\end{rem}

\section{Sobolev spaces on Neumann domains}
\label{sec:Sobolev}
We now discuss properties of Sobolev spaces on Neumann domains. As described in the introduction, and indicated in Proposition \ref{prop:angles-at-critical-pts}\eqref{enu:prop-angles-at-critical-pts-2} (see also Remark \ref{rem:notC}), the difficulty is that the boundary of a Neumann domain need not be of class $C$, so standard density and compactness results do not apply.

In Section \ref{sec:Sobolevdef} we define Sobolev spaces on a Neumann domain and various subsets of its boundary.
In Sections \ref{secdisec} and \ref{sec:truncate} we describe some technical constructions (dissection and truncation) that allow us to deal with cracks and cusps. Finally, in Section \ref{sec:dense} we prove the main result of this section, Proposition \ref{extens}, which establishes density and embedding properties for the space $W^{1,2}(\Omega)$ on a Neumann domain.

\subsection{Preliminaries}
\label{sec:Sobolevdef}
As above, we assume that $(M,g)$ is a smooth, closed, connected, oriented Riemannian surface. For an open submanifold $N\subset M$, the Sobolev space $W^{k,2}(N)$ is defined to be the completion of $C^{\infty}(N)$ with respect to the norm
\be
\label{snorm}
\|f\|^2_{W^{k,2}(N)}:=\sum\limits_{j=0}^k\int\limits_{N}|\nabla^j f|^2 ,
\ee
where $\nabla$ denotes the covariant derivative with respect to the metric $g$. The norm depends on $g$, but since $M$ is compact, different metrics will produce equivalent norms. We will sometimes take advantage of this fact and compute the Sobolev norm using a metric $\tilde{g}$ defined in a local coordinate chart to have components $\tilde{g}_{ij} = \delta_{ij}$ (so that covariant derivatives become partial derivatives, the Riemannian volume form reduces to the Euclidean one, etc.). This allows us to apply standard methods in the theory of Sobolev spaces on Lipschitz domains in $M$.

Now suppose that $N\subset M$ is an open submanifold with Lipschitz boundary. We will later choose $N$ to be a Neumann domain $\Omega$, or a proper subset thereof (see Section \ref{sec:truncate}) if $\partial\Omega$ has a crack or a cusp. We define the boundary Sobolev spaces $H^s(\partial N)$ for $|s| \leq 1$ via the Fourier transform and a suitable partition of unity, following \cite[p.~96]{McL}, so that the dual space is given by $H^s(\partial N)^* = H^{-s}(\partial N)$. Moreover, for any open subset $\Gamma \subset \partial N$ we let
\be
\label{sobolev}
\ba
	H^s(\Gamma) &:= \big\{ f\big|_\Gamma : f \in H^s(\partial N) \big\}, \\
	\widetilde H^s(\Gamma) &:= \text{closure of $C^\infty_0(\Gamma)$ in $H^s(\Gamma)$}.
\ea
\ee
The space $\widetilde H^s(\Gamma)$ has an equivalent description that is often useful in practice:
\be
\label{sobolev2}
		\widetilde H^s(\Gamma) = \big\{ f \in H^s(\partial N) : \supp\,f \subset \overline\Gamma \big\}.
\ee
This equivalence follows from \cite[Thm. 3.29]{McL}. Another convenient description, valid for $s \geq 0$, is
\be
\label{sobolev3}
	\widetilde H^s(\Gamma) = \big\{ f \in L^2(\Gamma) : \tilde f \in H^s(\partial N) \big\},
\ee
where $\tilde f$ denotes the extension of $f$ by zero to $\partial N \setminus \Gamma$; this is \cite[Thm. 3.33]{McL}.

It follows from the definitions that $\widetilde H^s(\Gamma) \subset H^s(\Gamma)$ for all $|s| \leq 1$, and it is well known that these spaces coincide for $|s| < 1/2$. However, for $|s| \geq 1/2$ we have $\widetilde H^s(\Gamma) \subsetneq H^s(\Gamma)$ whenever $\Gamma$ is a proper subset of $\partial N$. To see this, consider the constant function $f \equiv 1$ on $\Gamma$, which is clearly in $H^s(\Gamma)$ for any $s$. It is easily verified that its extension $\tilde f$, which is just the indicator function $\chi_{_\Gamma}$, is not in $H^s(\partial N)$ for $s \geq 1/2$, in which case we conclude from \eqref{sobolev3} that $f \notin \widetilde H^s(\Gamma)$. This distinction between the $H^s$ and $\widetilde H^s$ spaces will be important when we consider the normal derivative of a function restricted to a subset of the boundary; see in particular Lemma \ref{lem:split} and its application in the proof of Proposition \ref{prop:domain_of_Laplacian}.

The $\widetilde H^s$ spaces arise naturally as duals to the $H^s$ spaces. That is, for any $|s| \leq 1$ we have $H^s(\Gamma)^* = \widetilde H^{-s}(\Gamma)$, from \cite[Thm.~3.30]{McL}. In particular,
\be
\label{nest}
	\widetilde H^{-\frac12}(\Gamma) = H^{\frac12}(\Gamma)^* \subsetneq \widetilde H^{\frac12}(\Gamma)^* = H^{-\frac12}(\Gamma).
\ee
Using \eqref{sobolev} we obtain
\be
\label{equality}
\ba
	\ell=0 \quad \text{in} \quad H^{-s}(\Gamma) 
&\quad \Longleftrightarrow \quad \ell(f)=0 \quad \forall f \in C^{\infty}_0(\Gamma).
\ea
\ee
We thus define for $0 \leq s \leq 1$ the mapping 
\be
\label{wast}
\ba
&\cdot^{\dual}:L^2(\Gamma)\rightarrow H^{s}(\Gamma)^* , \\
&\quad g^{\dual}(f):=\langle f, g\rangle_{L^2(\Gamma)}, \quad f \in H^s(\Gamma),
\ea
\ee
observing that the $L^2$ inner product is well defined because $H^s(\Gamma) \subset L^2(\Gamma)$ for $0 \leq s \leq 1$. As a result, we will often abuse notation and use integral notation to denote the action of $\ell \in H^s(\Gamma)^*$ on $f \in H^s(\Gamma)$, i.e. we will write 
\[
	\ell(f) = \int_\Gamma \ell f
\]
even when $\ell$ is not in the range of the map $\cdot^{\dual}$; see in particular Green's identity \eqref{ggsmooth} below.

Given a decomposition $\partial N = \overline \Gamma_1 \cup \overline{\Gamma}_2$, where $\Gamma_1$ and $\Gamma_2$ are disjoint, open subsets of $\partial N$, and a distribution $\ell \in H^{-s}(\partial N)$ for some $s \geq 0$, we have $\ell|_{\Gamma_i} \in H^{-s}(\Gamma_i)$ for $i=1,2$. For $0 \leq s < 1/2$ we obtain the decomposition
\[
	\ell(\phi) = \ell|_{\Gamma_1}\big(\phi|_{\Gamma_1}\big) + \ell|_{\Gamma_2}\big(\phi|_{\Gamma_2}\big)
\]
for every $\phi \in H^s(\partial N)$. However, this is not true for $s \geq 1/2$, and in fact the right-hand side is not even defined in this case, since $\phi|_{\Gamma_i} \in H^s(\Gamma_i)$, whereas $\ell|_{\Gamma_i} \in H^{-s}(\Gamma_i)$ might not be contained in $H^s(\Gamma_i)^*$, as indicated in \eqref{nest}. However, such a splitting does hold for $\ell$ if we assume that $\Gamma_1$ and $\Gamma_2$ are separated by a third subset $\Gamma_0$ on which $\ell$ vanishes.

\begin{lem}
\label{lem:split}
Suppose $\partial N = \overline \Gamma_0 \cup \overline \Gamma_1 \cup \overline{\Gamma}_2$, where $\Gamma_0, \Gamma_1, \Gamma_2$ are disjoint, open subsets of $\partial N$ with $\overline \Gamma_1 \cap \overline{\Gamma}_2 = \emptyset$. If $\ell \in H^{-\frac12}(\partial N)$ vanishes on $\Gamma_0$, then $\ell|_{\Gamma_i} \in \widetilde H^{-\frac12}(\Gamma_i)$ for $i=1,2$, and
\be
	\ell(\phi) =  \ell|_{\Gamma_1}\big(\phi|_{\Gamma_1}\big) + \ell|_{\Gamma_2}\big(\phi|_{\Gamma_2}\big)
\ee
for every $\phi \in H^{\frac12}(\partial N)$.
\end{lem}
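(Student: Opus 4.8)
The plan is to localize $\ell$ near $\overline\Gamma_1$ and near $\overline\Gamma_2$ by means of a smooth cutoff adapted to the separation $\overline\Gamma_1\cap\overline\Gamma_2=\emptyset$, and to reduce both assertions to statements about the supports of the two resulting pieces. Since $\overline\Gamma_1$ and $\overline\Gamma_2$ are disjoint compact subsets of the compact manifold $\partial N$, I can choose $\chi\in C^\infty(\partial N)$ with $\chi\equiv 1$ on an open neighbourhood $O_1\supseteq\overline\Gamma_1$ and $\chi\equiv 0$ on an open neighbourhood $U_2\supseteq\overline\Gamma_2$. Setting $\ell_1:=\chi\ell$ and $\ell_2:=(1-\chi)\ell$, multiplication by a smooth function is bounded on $H^{\pm 1/2}(\partial N)$, so $\ell_1,\ell_2\in H^{-\frac12}(\partial N)$ and $\ell=\ell_1+\ell_2$. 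Everything will follow once I establish $\supp\ell_i\subseteq\overline\Gamma_i$.

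First I would prove $\supp\ell_1\subseteq\overline\Gamma_1$, the case of $\ell_2$ being symmetric upon replacing $\chi$ by $1-\chi$. By the definition of support it suffices to verify $\ell_1(\psi)=\ell(\chi\psi)=0$ for every $\psi\in C^\infty(\partial N)$ with $\supp\psi\cap\overline\Gamma_1=\emptyset$. For such $\psi$ the product $\chi\psi$ vanishes on the open set $\partial N\setminus\supp\psi$, which contains $\overline\Gamma_1$, and on $U_2\supseteq\overline\Gamma_2$ where $\chi=0$; thus $\chi\psi$ vanishes on an open neighbourhood of $\overline\Gamma_1\cup\overline\Gamma_2$, and since $\partial N=\overline\Gamma_0\cup\overline\Gamma_1\cup\overline\Gamma_2$ its support is contained in $\overline\Gamma_0$. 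By \eqref{sobolev2} with $s=\tfrac12$ this places $\chi\psi\in\widetilde H^{1/2}(\Gamma_0)$, and since $\ell$ vanishes on $\Gamma_0$ it kills $C^\infty_0(\Gamma_0)$ by \eqref{equality}, hence all of its closure $\widetilde H^{1/2}(\Gamma_0)$ by continuity; therefore $\ell(\chi\psi)=0$. This proves $\supp\ell_1\subseteq\overline\Gamma_1$, and \eqref{sobolev2}, now with $s=-\tfrac12$, gives $\ell_1\in\widetilde H^{-1/2}(\Gamma_1)$; symmetrically $\ell_2\in\widetilde H^{-1/2}(\Gamma_2)$.

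It then remains to identify $\ell_i$ with the restriction $\ell|_{\Gamma_i}$ and to assemble the splitting formula. Because $\chi\equiv 1$ on $O_1\supseteq\overline\Gamma_1$, for any $f\in\widetilde H^{1/2}(\Gamma_1)$, whose zero-extension $\tilde f$ is supported in $\overline\Gamma_1$ by \eqref{sobolev3}, we have $\chi\tilde f=\tilde f$ and hence $\ell_1(\tilde f)=\ell(\chi\tilde f)=\ell(\tilde f)=\ell|_{\Gamma_1}(f)$; that is, $\ell_1$ and $\ell|_{\Gamma_1}$ agree on $\widetilde H^{1/2}(\Gamma_1)$. Since $\ell_1\in\widetilde H^{-1/2}(\Gamma_1)=H^{1/2}(\Gamma_1)^*$ is already a bounded functional on the larger space $H^{1/2}(\Gamma_1)$, this shows that the a priori element $\ell|_{\Gamma_1}\in H^{-1/2}(\Gamma_1)$ in fact lies in $\widetilde H^{-1/2}(\Gamma_1)$ and coincides there with $\ell_1$, and likewise for $i=2$. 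Finally, for $\phi\in H^{1/2}(\partial N)$ the compatibility of the dual pairing with restriction --- a distribution supported in $\overline\Gamma_i$ pairs with $\phi$ only through $\phi|_{\Gamma_i}$, which is the content of the McLean duality underlying \eqref{nest} --- gives $\ell_i(\phi)=\ell|_{\Gamma_i}(\phi|_{\Gamma_i})$, so summing $\ell=\ell_1+\ell_2$ yields the stated identity.

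The main obstacle, and the reason the hypotheses are needed, lies entirely at the borderline regularity $s=\tfrac12$. For a general $\ell$ the naive restriction $\ell|_{\Gamma_i}$ lives in $H^{-1/2}(\Gamma_i)=\widetilde H^{1/2}(\Gamma_i)^*$, which pairs only with $\widetilde H^{1/2}(\Gamma_i)$, whereas $\phi|_{\Gamma_i}\in H^{1/2}(\Gamma_i)$ need not belong to that subspace; by \eqref{nest} the right-hand side of the splitting is then not even defined. The assumption that $\ell$ vanishes on the separating set $\Gamma_0$ is exactly what forces each localized piece to be supported in $\overline\Gamma_i$ and therefore, by \eqref{sobolev2}, to lie in the smaller dual space $\widetilde H^{-1/2}(\Gamma_i)=H^{1/2}(\Gamma_i)^*$, which does pair with $\phi|_{\Gamma_i}$. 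Care must accordingly be taken throughout to invoke the precise identifications --- \eqref{sobolev2} for supports and \eqref{nest} for the dual pairings --- rather than treating the $\widetilde H^s$ and $H^s$ spaces interchangeably.
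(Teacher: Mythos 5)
Your proof is correct, and while it follows the same basic strategy as the paper (localize $\ell$ by smooth cutoffs, then use the support characterization \eqref{sobolev2} to land in the $\widetilde H^{-\frac12}$ spaces), the decomposition itself is genuinely different. The paper takes two independent bump functions $\chi_1,\chi_2$, with $\chi_i$ equal to $1$ on $\Gamma_i$ and $0$ on the other piece, and sets $\ell_i(\phi) := \ell(\chi_i\phi)$; since $\chi_1+\chi_2\neq 1$, the sum $\ell_1+\ell_2$ need not equal $\ell$, and the paper must then show that the remainder $\hat\ell=\ell-\ell_1-\ell_2$ is supported in the finite set of interface points and invoke \cite[Lem.~3.39]{McL} (no nonzero $H^{-\frac12}$ distribution is supported on finitely many points) to conclude $\hat\ell=0$. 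Your single cutoff $\chi$, equal to $1$ on a neighbourhood of $\overline\Gamma_1$ and $0$ on a neighbourhood of $\overline\Gamma_2$, gives an exact partition of unity, so $\ell=\ell_1+\ell_2$ holds by construction and the finite-support lemma is never needed --- a modest but real simplification. The price you pay is that your support argument uses the hypothesis ``$\ell$ vanishes on $\Gamma_0$'' in the stronger form that $\ell$ annihilates all of $\widetilde H^{\frac12}(\Gamma_0)$, i.e.\ every $H^{\frac12}$ function supported in the closed set $\overline\Gamma_0$, which you correctly justify by combining \eqref{equality} with the density content of \eqref{sobolev2}. This is not an extra assumption: the paper's own proof tacitly needs the same step (its assertion that $\supp(\chi_1\phi)\subset\Gamma_0$ should really read $\supp(\chi_1\phi)\subset\overline\Gamma_0$, after which one still has to approximate by elements of $C^\infty_0(\Gamma_0)$), so your write-up in fact makes explicit a point the paper glosses over. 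Your identification $\ell_1|_{\Gamma_1}=\ell|_{\Gamma_1}$ and the final pairing-compatibility step via the duality behind \eqref{nest} match the paper's reasoning.
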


Such a partition of the boundary is illustrated in Figure \ref{fig:Neumann-domains-dissection-truncation}, where $N = \Omega_t \cap \Omega_{\rm r}$, $\overline\Gamma_0 = \gamma_{0,t}$, $\overline\Gamma_1 = \tilde\eta$ and $\overline\Gamma_2 = \gamma_{-,t}$.

\begin{proof}
We will use \eqref{sobolev2} to prove that $\ell|_{\Gamma_1} \in \widetilde H^{-\frac12}(\Gamma_1)$. This does not follow immediately, however, since $\ell$ is not necessarily supported in $\overline\Gamma_1$. Therefore, we will create a modified distribution, $\ell_1$, such that $\supp \ell_1 \subset \overline\Gamma_1$ and $\ell|_{\Gamma_1} = \ell_1|_{\Gamma_1}$.

Since $\overline \Gamma_1 \cap \overline{\Gamma}_2 = \emptyset$, we can find a smooth bump function $\chi_1$ that equals $1$ on $\Gamma_1$ and $0$ on $\Gamma_2$. Consider the distribution $\ell_1(\phi) := \ell(\chi_1 \phi)$, which is in $H^{-\frac12}(\partial N)$. If $\supp \phi \subset \overline \Gamma_0 \cup \Gamma_2$, then $\supp (\chi_1 \phi) \subset \Gamma_0$, and hence $\ell_1(\phi) = \ell(\chi_1 \phi) = 0$, because $\ell$ vanishes on $\Gamma_0$. This shows that $\supp \ell_1 \subset \partial N \setminus (\overline \Gamma_0 \cup \Gamma_2) = \overline \Gamma_1$. On the other hand, if $\supp\phi \subset \Gamma_1$, then $\chi_1 \phi = \phi$, and hence $\ell_1(\phi) = \ell(\phi)$. We have thus shown that $\ell|_{\Gamma_1} = \ell_1|_{\Gamma_1} \in \widetilde H^{-\frac12}(\Gamma_1)$.

Similarly, we obtain $\ell_2 \in H^{-\frac12}(\partial N)$ with $\supp \ell_2 \subset \overline\Gamma_2$ and $\ell|_{\Gamma_2} = \ell_2|_{\Gamma_2} \in \widetilde H^{-\frac12}(\Gamma_2)$. It follows that the distribution
\[
	\hat \ell := \ell - \ell_1 - \ell_2 \in H^{-\frac12}(\partial N)
\]
has support in $\partial N \setminus (\Gamma_0 \cup \Gamma_1 \cup \Gamma_2) = (\overline \Gamma_0 \cap \overline{\Gamma}_1) \cup (\overline \Gamma_0 \cap \overline{\Gamma}_2)$, which is a finite set. However, a distribution in $H^{-\frac12}$ cannot be supported on a finite set of points, by \cite[Lem.~3.39]{McL}, so we conclude that $\hat \ell$ is identically zero, which completes the proof.
\end{proof}

We next introduce the weak Laplace--Beltrami operator, $\Delta \colon W^{1,2}(\Omega) \to W^{1,2}_0(\Omega)^*$, where $\Omega \subset M$ is any open subset of $M$. By definition, $\Delta \psi = f$ means
\begin{equation}
\label{weakBT}
\int_\Omega \left<\grad \psi, \grad \phi\right> = \int_\Omega f \phi
\end{equation}
for all $\phi \in W^{1,2}_0(\Omega)$, where the integral on the right-hand side is shorthand for the action of $f \in W^{1,2}_0(\Omega)^*$ on $\phi \in W^{1,2}_0(\Omega)$. If $\Delta \psi = f \in L^2(\Omega)$, then this is a genuine $L^2$ inner product of $f$ and $\phi$.

For a Lipschitz domain $N$ the trace map $\cdot|_{\partial N}:W^{1,2}(N)\rightarrow H^{\frac{1}{2}}(\partial N)$ is continuous. The weak version of Green's identity says that for any $\psi\in W^{1,2}(N)$ with $\Delta \psi\in L^2(N)$, there exists a unique $\partial_{\nu}\psi\in H^{-\frac{1}{2}}(\partial{N})$ such that 
\begin{equation}
\label{ggsmooth}
	\int_N \left<\grad \psi, \grad \phi \right> = \int_N (\Delta \psi) \phi + \int_{\partial N} \phi (\partial_\nu \psi)
\end{equation}
for all $\phi\in W^{1,2}(N)$, \cite[Thm.~4.4]{McL}.  The boundary term has to be understood as the action of $\partial_{\nu}\psi\in H^{-\frac12}(\partial{N})$ on $\phi|_{\partial{N}}\in H^{\frac12}(\partial{N})$, i.e., $(\partial_{\nu}\psi)(\phi|_{\partial N})$, but to simplify the presentation we use the integral notation of \eqref{ggsmooth}. 

Finally, consider an open subset $\Gamma \subset \partial N$. For a function $\psi \in W^{1,2}(N)$ we define $\psi\big|_\Gamma$ to be the restriction of $\psi\big|_{\partial N} \in H^{1/2}(\partial N)$ to $\Gamma$, so that \eqref{sobolev} implies $\psi\big|_\Gamma \in H^{1/2}(\Gamma)$. Similarly, if $\psi \in W^{1,2}(N)$ and $\Delta \psi \in L^2(N)$, we have $\partial_\nu \psi \big|_\Gamma \in H^{-1/2}(\Gamma)$. It is not necessarily true that $\partial_\nu \psi \big|_\Gamma$ is contained in the smaller space $\widetilde H^{-1/2}(\Gamma) = H^{1/2}(\Gamma)^*$; c.f. \eqref{nest}. However, this will be the case if $\partial_\nu \psi$ vanishes on $\partial N \setminus \overline\Gamma$, by Lemma \ref{lem:split}. This fact will play a crucial role in the proof of Proposition \ref{prop:domain_of_Laplacian} below.

We conclude the section by explaining our decision to use $W^{k,2}$-Sobolev spaces on $N$ but $H^s$-Sobolev spaces on $\partial N$. Recall that $H^1(\Omega) \subset W^{1,2}(\Omega)$ holds for any open set $\Omega$, but the inclusion can be strict unless one has additional regularity of the boundary. In Definition \ref{def:NeuLap} we construct the Neumann Laplacian as the self-adjoint operator corresponding to a non-negative, symmetric bilinear form. For this we require the form to be closed, which is the case if the form domain is $W^{1,2}(\Omega)$, but need not be true if the form domain is $H^1(\Omega)$.

On the other hand, the $H^s$-Sobolev spaces, defined via the Fourier transform, provide a more natural setting for the discussion of traces: If $N$ is an open submanifold with Lipschitz boundary, the trace map $\cdot|_{\partial N} \colon C^1(\bar N) \to C^1(\partial N)$ has a bounded, surjective extension $\cdot|_{\partial N}:H^1(N)\rightarrow H^{\frac{1}{2}}(\partial N)$. For $N$ Lipschitz we have the equality $H^1(N) = W^{1,2}(N)$, and hence a well-defined trace map $\cdot|_{\partial N}:W^{1,2}(N)\rightarrow H^{\frac{1}{2}}(\partial N)$.

\subsection{Dissections of Neumann domains}
\label{secdisec}
The boundary of a cracked Neumann domain cannot be of class $C$, whether or not there is a cusp on the boundary, due to the Neumann line $\eta$ contained in the interior of $\overline\Omega$; see Figure \ref{fig:Neumann-domains-schematic}. We deal with this by dissecting such a Neumann domain into two pieces, as shown in Figure \ref{fig:Neumann-domains-dissection}, where one piece has Lipschitz boundary, and the other has boundary that is Lipschitz except possibly at a cusp point, i.e. it has the same regularity as a regular Neumann domain. For doubly-cracked domains as in Figure \ref{fig:Neumann-domains-schematic}, an analogous statement holds as the proof for that case is essentially the same. The dissection thus reduces many of the proofs for cracked domains to the corresponding results for regular domains.

This dissection is made possible by the following lemma.
\begin{lem}
\label{c1neumannline}
Assume $f$ is a Morse function, and let $\gamma$ be a Neumann line. Then $\gamma$ has finite length $L(\gamma)<\infty$, and admits an arc-length parametrization with $\gamma\in C^1\big([0,L(\gamma)]\big)$, i.e., boundary points are included. 
\end{lem}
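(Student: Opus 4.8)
The plan is to realise $\gamma$ as the closure of a single gradient flow line and to analyse it separately in its interior and near the two critical points that serve as its endpoints. By Definition \ref{def: Neumann_line_and_degree_of_critical_pt}, $\gamma = \overline{\{\varphi(t,x_0) : t \in \R\}}$ for a suitable $x_0$, where the trajectory converges to critical points $\bs c_-$ and $\bs c_+$ of $f$ as $t \to -\infty$ and $t \to +\infty$; taking the closure adds exactly these two limit points. Since $f$ is Morse, $\Hess f(\bs c_\pm)$ is non-degenerate, so $DF(\bs c_\pm) = -\Hess f(\bs c_\pm)$ is symmetric with non-zero eigenvalues, and Proposition \ref{prop: Hartman} applies at each endpoint (working in normal coordinates there, so that the Euclidean and Riemannian notions of length and angle agree at $\bs c_\pm$). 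Away from the endpoints the orbit meets no critical point, so $\grad f \neq 0$ and $t \mapsto \varphi(t,x_0)$ is a smooth immersed curve; the whole analysis thus reduces to the two tails $t \to \pm\infty$.

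For finite length I would write $L(\gamma) = \int_\R |\partial_t \varphi(t,x_0)|\,dt = \int_\R |\grad f(\varphi(t,x_0))|\,dt$ and split into a compact middle part $[-T,T]$ and the two tails. The middle part is the length of a compact piece of a smooth curve, hence finite. For the tail $t \ge T$, convergence $\varphi(t,x_0) \to \bs c_+$ ensures $\varphi(t,x_0) \in U$ for $t \ge T$, and \eqref{bettermap} together with the flow property (applied with base point $\varphi(T,x_0)$) gives $\Phi(\varphi(t,x_0)) = \rme^{DF(\bs c_+)(t-T)}\Phi(\varphi(T,x_0))$. Since $\Phi(\varphi(T,x_0))$ lies in the invariant subspace on which $\rme^{DF(\bs c_+)s}$ decays (all of $\R^2$ at a minimum, the stable eigenline at a saddle), its norm is $O(\rme^{-\delta(t-T)})$ for some $\delta>0$; as $\Phi$ is a $C^1$-diffeomorphism with $D\Phi(\bs c_+) = \Id$, this yields $|\varphi(t,x_0)-\bs c_+| = O(\rme^{-\delta(t-T)})$. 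Because $\grad f(\bs c_+)=0$ and $\grad f$ is smooth, $|\grad f(\varphi(t,x_0))| \le C|\varphi(t,x_0)-\bs c_+|$ decays exponentially, so the tail integral converges; the same argument at $\bs c_-$ handles $t\to-\infty$, giving $L(\gamma)<\infty$.

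For the $C^1$ arc-length parametrization I would reparametrize by arc length $s \in [0,L(\gamma)]$. Since $ds/dt = |\grad f| > 0$ in the interior, this is a smooth change of variables there, and it remains to show that the unit tangent $\gamma'(s)$ extends continuously to $s=0$ and $s=L(\gamma)$. Near $\bs c_+$, setting $\psi(t) := \rme^{DF(\bs c_+)(t-T)}\Phi(\varphi(T,x_0))$, the conjugacy \eqref{bettermap} gives $\varphi(t,x_0) = \Phi^{-1}(\psi(t))$, so the unit tangent equals $D\Phi^{-1}(\psi(t))\psi'(t)\big/\big|D\Phi^{-1}(\psi(t))\psi'(t)\big|$. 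The normalized linear velocity $\psi'(t)/|\psi'(t)|$ converges as $t\to\infty$ to a definite direction $w$ (the slowest-decaying eigendirection of $\rme^{DF(\bs c_+)s}$, or the radial direction when $DF(\bs c_+)$ is a scalar matrix). Since $\psi(t)\to\bs 0$ and $D\Phi^{-1}(\bs 0) = (D\Phi(\bs c_+))^{-1} = \Id$, the unit tangent converges to $w$ as well; the same holds at $\bs c_-$. As $\gamma(s)$ also converges to the endpoint critical point, we conclude $\gamma \in C^1\big([0,L(\gamma)]\big)$.

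The main obstacle is exactly this endpoint analysis: one must simultaneously show the speed decays fast enough for the length to converge and that the tangent direction has a genuine limit. As emphasized in Remark \ref{rem:proof}, neither can be read off from a naive Taylor expansion of the flow, because of the coupling of higher-order terms; the decisive tool is the $C^1$-linearization of Proposition \ref{prop: Hartman}, whose normalization $D\Phi(\bs c_\pm)=\Id$ is precisely what transfers the clean exponential behaviour and limiting direction of the linear flow back to $\gamma$. A mere topological conjugacy would not suffice for either conclusion.
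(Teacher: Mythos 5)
Your proof is correct and takes essentially the same route as the paper: decompose $\gamma$ into a compact middle arc and two end pieces near the critical endpoints, then use the $C^1$ linearization of Proposition \ref{prop: Hartman} (with $D\Phi = \Id$ at the critical point) to transfer the finite length and $C^1$-up-to-the-endpoint properties of the linear flow lines back to $\gamma$. The only difference is cosmetic: you treat saddle endpoints via Hartman's theorem as well (which the paper's remark confirms is legitimate in two dimensions), whereas the paper's proof handles that case by citing \cite[Thm.~4.2~(p.~94)]{Ban:2004}, and your explicit exponential-decay and tangent-convergence estimates spell out what the paper compresses into the statement that $\gamma_1 = \Phi^{-1}\circ(\Phi\circ\gamma_1)$ is a composition of $C^1$ maps.
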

\begin{proof}
We decompose $\gamma=\gamma_0\cup\gamma_1 \cup \gamma_2$, where $\gamma_1$ is defined in a small neighbourhood of the initial endpoint of $\gamma$ and $\gamma_2$ is defined in a small neighbourhood of the terminal endpoint. Then it is enough to prove the corresponding statement for $\gamma_0$, $\gamma_1$ and $\gamma_2$.

The result for $\gamma_0$ follows by standard results for flows of smooth vector fields. Definition~\ref{def: Neumann_line_and_degree_of_critical_pt} implies that the endpoints of $\gamma$ are critical points of $f$. If the initial endpoint (which we label $\bs c$) is a saddle, then the result for $\gamma_1$ follows, e.g., by \cite[Thm.~4.2~(p.~94)]{Ban:2004}. On the other hand, if $\bs c$ is an  extremum we use the map $\Phi$ from Proposition \ref{prop: Hartman}. Then $\Phi \circ\gamma_1$ is a flow line generated by $\rme^{-\Hess f(\bs{c})t}$, and hence satisfies the properties of the claim, i.e. it is $C^1$ up to the endpoint and has finite length. As $\Phi^{-1}$ is a $C^1$ map and $\gamma_1 = \Phi^{-1}\circ (\Phi\circ\gamma_1)$ is a composition of $C^1$ functions, the claim for $\gamma_1$ follows. The proof for $\gamma_2$ is identical.
\end{proof}

\begin{figure}[ht]
\centering{}\includegraphics[width=0.6\textwidth]{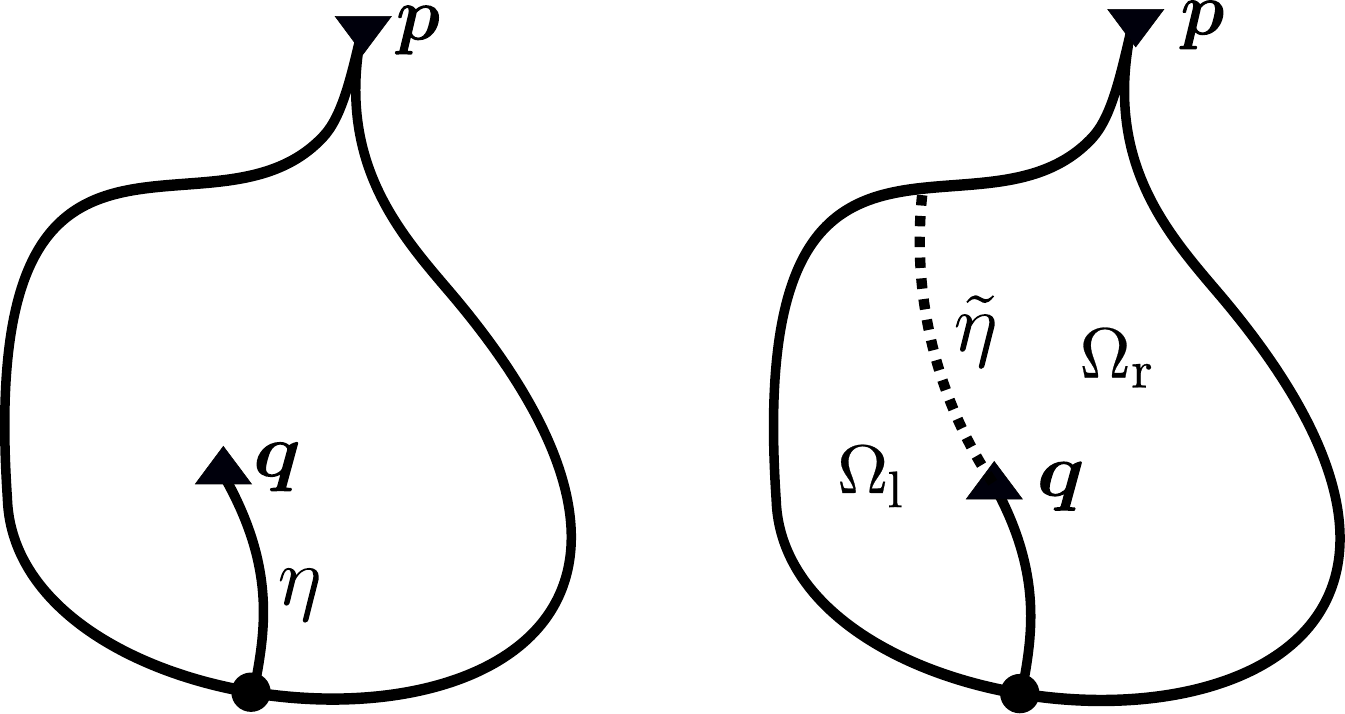} \caption{The dissection of a cracked Neumann domain, as given in \eqref{dissect}. The Neumann line $\eta$ 
is extended to a Lipschitz curve $\eta \cup \tilde\eta$, so that
$\Omega_{\mathrm{l}}$ is a Lipschitz domain and $\Omega_{\mathrm{r}}$
possesses a cusp at $\bs{p}$.
}
\label{fig:Neumann-domains-dissection}
\end{figure}

Now suppose that $\Omega$ is a cracked Neumann domain. The doubly-cracked case in Figure \ref{fig:Neumann-domains-schematic} can be treated analogously. Denote by $\bs{q}$ the extremum in the interior of $\overline{\Omega}$, and let $\eta$ be the Neumann line attached to $\bs q$;  see Figure \ref{fig:Neumann-domains-dissection}. Using Lemma~\ref{c1neumannline} we may dissect $\Omega$ into two parts ($\Omega_{\mathrm{l}}$ and $\Omega_{\mathrm{r}}$ in Figure \ref{fig:Neumann-domains-dissection}) so that
\be
\label{dissect}
	\Omega \setminus \tilde\eta =\Omega_{\mathrm{l}} \cupdot \Omega_{\mathrm{r}},
\ee
where $\tilde{\eta}$ is a Lipschitz curve in $\Omega$ joining $\bs{q}$ with a non-cusp point of $\partial\Omega$, hence $\eta \cup \tilde{\eta}$ is a Lipschitz curve by Lemma~\ref{c1neumannline}. This dissection induces an isometric embedding
\be
\label{isoemb}
\ba
	W^{1,2}(\Omega) &\rightarrow W^{1,2}(\Omega_{\mathrm{l}})\oplus W^{1,2}(\Omega_{\mathrm{r}}) \\
	\phi &\mapsto \big( \phi|_{\Omega_\mathrm{l}}, \phi|_{\Omega_\mathrm{r}} \big).
\ea
\ee

\subsection{Truncated Neumann domains}
\label{sec:truncate}
To deal with potential cusps at the maximum and minimum of $f$, we introduce truncated versions of $\Omega$. Denoting by $\bs{p}\in\Min(f)$ and $\bs{q}\in\Max(f)$ the minimum and maximum of $f$ in $\overline{\Omega}$, we observe that $f(\bs q) < f(\bs p)$, since otherwise $f$ would be constant on $\overline\Omega$, which is not possible as it is a Morse function. Adding a constant to $f$, which does not affect the gradient flow lines, we can thus assume that $f(\bs q) < 0 < f(\bs p)$. (In the special case that $f$ is an eigenfunction this condition holds automatically, so it is not necessary to shift $f$.)

We then define for each $0 < t < 1$ the truncated domains
\be
\label{omegat}
\ba
&\Omega_{t}:=\begin{cases}
\{x\in\Omega: \ f(x) <tf(\bs{q})\}, & \bs{q} \ \text{is a cusp}, \ \bs{p} \ \text{is not},\\
\{ x\in\Omega: \ tf(\bs{p}) < f(x) \}, & \bs{p} \ \text{is a cusp},\ \bs{q} \ \text{is not},\\
\{x\in\Omega: \ tf(\bs{p})<f(x) <tf(\bs{q})\}, & \bs{q} \ \text{and} \ \bs{p} \ \text{are cusps},\\
\Omega, & \text{otherwise}.
\end{cases}
\ea
\ee
Some examples of this construction are shown in Figure \ref{fig:Neumann-domains-schematicaaa}.

The boundary of $\Omega_t$ can be decomposed as $\partial\Omega_t=\gamma_{\pm,t}\cup\gamma_{0,t}$, where $\gamma_{\pm,t}$ are level lines defined by 
\be
\label{gammapmdef}
	\gamma_{+,t} = \big\{x : f(x) = t f(\bs q) \big\}, \qquad 	\gamma_{-,t} = \big\{x : f(x) = t f(\bs p) \big\},
\ee
and $\gamma_{0,t} = \pO_t  \cap \pO$ is the part of $\pO$ that remains after the truncation. Note that $\gamma_{0,t}\neq\emptyset$, and Proposition \ref{prop:angles-at-critical-pts}\eqref{enu:prop-angles-at-critical-pts-3} implies that $\gamma_{\pm,t}$ meets $\partial\Omega$ perpendicularly, except for a finite number of exceptional times where $\gamma_{\pm,t}$ meets $\partial\Omega$ at a saddle point, in which case the meeting angle is $\frac{\pi}{4}$; see Figure~\ref{fig:Neumann-domains-schematicaaa}.

For a truncated Neumann domain $\Omega_t$ we denote its complement in $\Omega$ by $\Omega_t^c := \Omega\setminus\Omega_t$. For any $0 < t < 1$ and sufficiently small $\epsilon>0$, we can find a smooth cutoff function $\chi$ on $M$ such that
\be
\label{chidef}
	\chi(x) =
\begin{cases}
0, & x\in \Omega_t,\\
1, & x\in\Omega_{t+\epsilon}^c.
\end{cases} 
\ee
If desired, we can assume that $\chi$ is of the form $\alpha \circ f$ for some $\alpha \in C^{\infty}(\R)$, in which case $\chi$ has the same level lines as $f$. For the arguments to follow, however, a generic smooth cutoff will suffice.
\begin{figure}[ht]
\centering{}\includegraphics[width=0.55\textwidth]{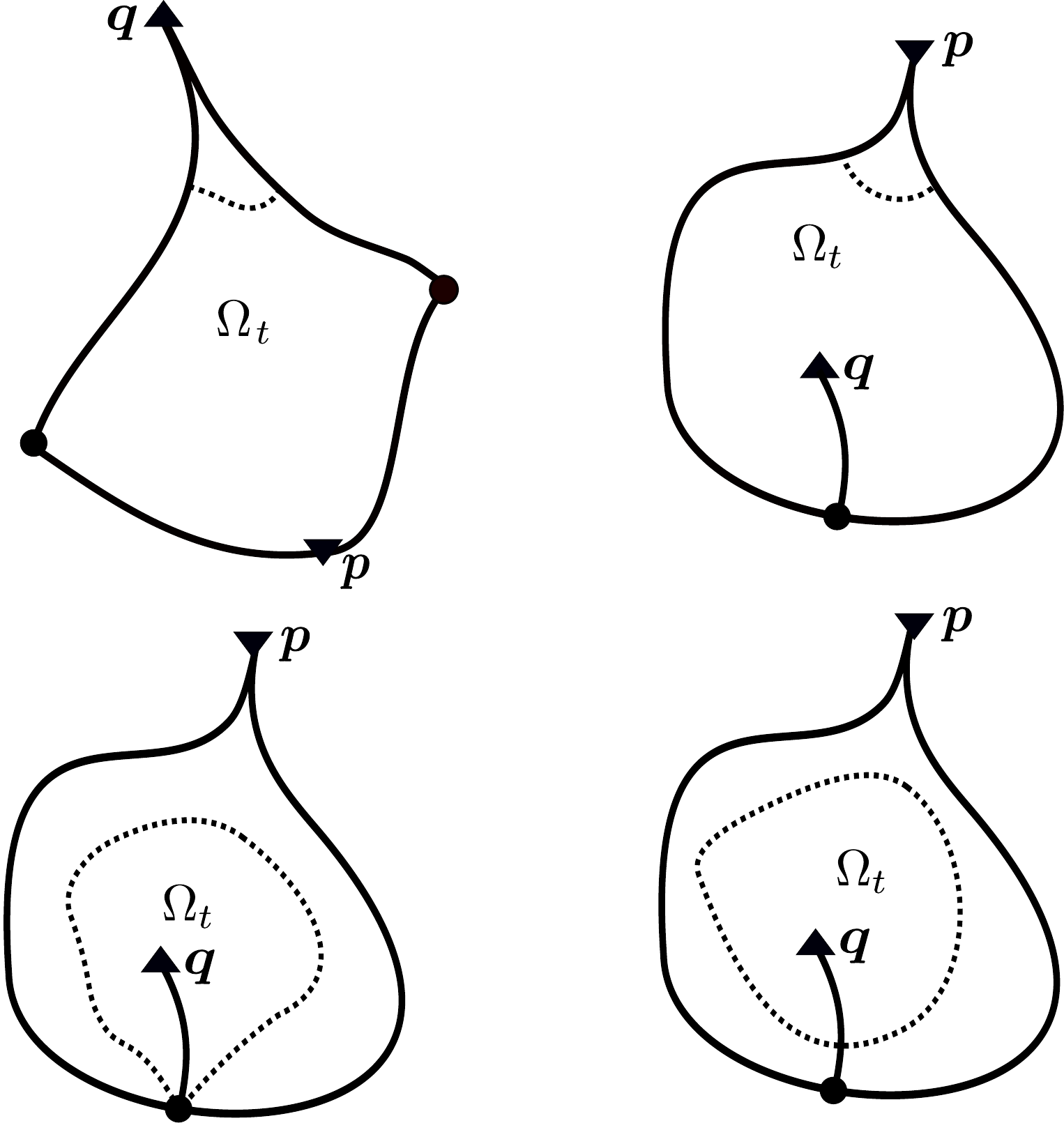} \caption{Neumann domains and their truncations, with the dotted line indicating the curve $\gamma_{\pm,t}$. The top two figures show regular and cracked domains for $t$ close to $1$. For the same cracked domain the bottom left figure shows an exceptional value of $t$, where $\gamma_{\pm,t}$ meets $\partial\Omega$ at angle $\frac{\pi}{4}$, and the bottom right figure shows a smaller value of $t$.}
\label{fig:Neumann-domains-schematicaaa}
\end{figure}
\subsection{Density and embedding results}
\label{sec:dense}
We now state and prove the main result of this section.
\begin{prop}
\label{extens}
Let $(M,g)$ be a closed, connected, oriented Riemannian surface. If $\Omega \subset M$ is a Neumann domain of a Morse function $f$, the following hold:
\begin{enumerate}
\item \label{compprime} the embedding $W^{1,2}(\Omega)\rightarrow L^2(\Omega)$ is compact;
\item \label{denseprime} if $\Omega$ is regular, then $C^1(\overline{\Omega})$ is dense in $W^{1,2}(\Omega)$;
\item \label{dense} if $\Omega$ is cracked, then there exists $t\in(0,1)$ such that the set of functions
\be
\label{tildec}
	\big\{ \phi\in W^{1,2}(\Omega): \ \phi|_{\overline{\Omega_t^c}} \in C^1(\overline{\Omega_t^c}) \big\}
\ee
is dense in $W^{1,2}(\Omega)$.
\end{enumerate}
\end{prop}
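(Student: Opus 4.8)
The plan is to isolate the two mechanisms by which $\partial\Omega$ fails to be of class $C$ (Remark~\ref{rem:notC}) and treat each with the tools already set up: cracks via the dissection \eqref{dissect}--\eqref{isoemb}, which splits a cracked domain into a Lipschitz piece $\Omega_{\mathrm l}$ and a piece $\Omega_{\mathrm r}$ having the regularity of a regular domain, and cusps via the truncation \eqref{omegat} together with a comparison to the class-$C$ domain $\Omega_2$ from \eqref{nonCexample}. Throughout I would work in a local chart with a metric $\tilde g_{ij}=\delta_{ij}$, so that Sobolev norms become Euclidean and standard Lipschitz-domain theory applies.

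I would prove the density statement \eqref{denseprime} first. Away from the (at most two) extremal points, $\partial\Omega$ is Lipschitz by Proposition~\ref{prop:angles-at-critical-pts}, so there a partition of unity plus the usual translate-and-mollify argument gives $C^1$-approximation; the only real work is near a cusp. Using Lemma~\ref{c1neumannline} (which rests on the Hartman normal form, Proposition~\ref{prop: Hartman}) I would pass to $C^1$ coordinates that flatten the lower of the two Neumann lines at the cusp onto a segment of the $x$-axis. Since the two lines meet at angle $0$, the domain then takes the model form $D^+=\{0<y<b(x)\}$ with $b\in C^1$, $b(0)=b'(0)=0$, exactly as for $\Omega_1$. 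The key observation is that the even reflection $y\mapsto -y$ extends $D^+$ to the symmetric domain $D=\{|y|<b(x)\}$ which, like $\Omega_2$, satisfies the segment property and so is of class $C$. Even reflection is an isometric (hence bounded) extension $W^{1,2}(D^+)\to W^{1,2}(D)$, and on the class-$C$ domain $D$ the space $C^1(\overline D)$ is dense in $W^{1,2}(D)$; restricting approximants to $D^+$ and pulling back through the $C^1$ chart gives $C^1$-approximation near the cusp. (Only $C^1$-regularity is available, because the Hartman chart is merely $C^1$; this is precisely why the statement uses $C^1$ and not $C^\infty$.) Gluing the local approximations by the partition of unity proves \eqref{denseprime}.

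For the compactness statement \eqref{compprime} I would reduce the cracked case to the regular one: \eqref{isoemb} realizes $W^{1,2}(\Omega)$ as a closed subspace of $W^{1,2}(\Omega_{\mathrm l})\oplus W^{1,2}(\Omega_{\mathrm r})$, and since $\tilde\eta$ is null we have $L^2(\Omega)=L^2(\Omega_{\mathrm l})\oplus L^2(\Omega_{\mathrm r})$, so compactness for the Lipschitz piece $\Omega_{\mathrm l}$ (Rellich) and for $\Omega_{\mathrm r}$ yields it for $\Omega$. For a regular $\Omega$ the truncated domain $\Omega_t$ is Lipschitz, so $W^{1,2}(\Omega_t)\hookrightarrow L^2(\Omega_t)$ is compact, and it remains to prove uniform smallness of the cusp tail,
\[
	\sup_{\|u\|_{W^{1,2}(\Omega)}\le 1}\ \|u\|_{L^2(\Omega\setminus\Omega_t)}\longrightarrow 0\qquad(t\to 1),
\]
after which a diagonal-subsequence argument gives compactness on $\Omega$. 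I would obtain this by writing $u=(u-\bar u_T)+\bar u_T$ on the tail $T=\Omega\setminus\Omega_t$: the oscillation term is bounded by a Poincaré inequality on $T$ with constant comparable to $\diam T\to 0$ (the shrinking cusp region being a John domain, which the reflection to $D$ makes transparent), while the mean term is controlled through a fixed reference region $B$ near the cusp, using Poincaré on $B$ and the ratio $|T|/|B|\to 0$. Choosing $B$ small first, then $t$ close to $1$, makes the total bound uniformly small.

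Finally, for the cracked density statement \eqref{dense} I would fix $t$ so that the truncation removes the cusp (which, having degree $\ge 2$, sits at the extremum opposite the degree-one crack point), pick a cutoff $\chi$ as in \eqref{chidef} with $\chi\equiv 1$ on $\overline{\Omega_t^c}$ and supported near the cusp, and decompose $\phi=\chi\phi+(1-\chi)\phi$. The second term vanishes on $\overline{\Omega_t^c}$ and so already lies in the set \eqref{tildec}; the first is supported near the cusp, away from the crack, where the only singularity is the regular-type cusp, so it is $C^1$-approximated by the reflection argument of \eqref{denseprime}. Summing gives functions in \eqref{tildec} converging to $\phi$, and the crack is never crossed, which is exactly why one settles for $C^1$-regularity on $\overline{\Omega_t^c}$ rather than on all of $\overline\Omega$. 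The main obstacle in all three parts is the cusp: verifying that the reflected model $D$ is genuinely of class $C$ (the segment property for $\{|y|<b(x)\}$ with a $C^1$, tangentially vanishing $b$) and that the shrinking cusp tail supports a Poincaré inequality with vanishing constant. This is where the comparison with $\Omega_2$ does the real work.
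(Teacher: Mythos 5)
Your overall architecture matches the paper's (localize at the cusp, handle the Lipschitz part by standard theory, dissect cracked domains via \eqref{dissect}, glue with cutoffs), and your two substitutions --- even reflection in place of the Maz'ya--Poborchi extension operator, and truncation plus a uniform tail estimate plus a diagonal argument in place of extension-then-Rellich for compactness --- would be legitimate alternative routes. However, both of them rest on a claim that is false at the level of generality you invoke, and this is a genuine gap. You extract from Hartman's theorem (via Lemma \ref{c1neumannline}) only that the two boundary curves at the cusp are $C^1$ and tangent, so your model domain is $D^+=\{0<y<b(x)\}$ with $b\in C^1$, $b(0)=b'(0)=0$. But for such $b$ the reflected domain $D=\{|y|<b(x)\}$ need \emph{not} be of class $C$: take $b(x)=x^3\big(2+\sin(1/x)\big)$, which is $C^1$ with $b(0)=b'(0)=0$ but non-monotone on every interval $(0,\epsilon)$. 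The segment property at the origin then fails for every direction --- any admissible segment vector must point into the cusp along the $x$-axis (any transverse component pushes boundary points of height $o(t)$ to height $\sim t$, outside $D$), and with $e=(1,0)$ a point $(x_k,b(x_k))$ at a local maximum of $b$ is pushed out of $\overline{D}$. So the class-$C$ property of $D$, which your density proof and (via Poincar\'e on $D$) your compactness proof both need, is exactly the step you cannot get from $C^1$ tangency alone. The paper closes this hole by using the \emph{full} strength of Hartman's theorem: in the Hartman chart the boundary curves are genuine flow lines of the linearized system, hence exact power curves $y=c_i x^{\alpha}$; monotonicity of $x^\alpha$ is what makes the comparison domain (the paper's $G$, your $D$) of class $C$, and it is precisely the hypothesis ``$\vartheta$ increasing'' in Lemma \ref{lem:MP}. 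Your proof becomes correct once you flatten the \emph{Hartman-chart} curves rather than merely $C^1$ curves, so that $b(x)=(c_2-c_1)x^\alpha$.

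A second, related error: your tail estimate justifies the Poincar\'e inequality on the shrinking cusp region $T$ by asserting that $T$ (or its reflection $D$) is a John domain. Outward power cusps are never John domains --- for a point $(x_0,0)$ deep in the cusp $\{|y|<x^\alpha\}$, $\alpha>1$, any curve to a fixed center has distance-to-boundary $\sim x^\alpha \ll$ arc length, violating the carrot condition --- and class $C$ does not imply John. The inequality you want, $C_P(T)\lesssim \operatorname{diam} T$, is in fact true, but again only because of the power-law structure: the exact cusps $\{c_1x^\alpha<y<c_2x^\alpha,\ 0<x<s\}$ are self-similar under the anisotropic scaling $(x,y)\mapsto(sx,s^\alpha y)$, under which one computes $C_P(T_s)\le s\,C_P(T_1)$, and the Poincar\'e inequality on the unit model cusp $T_1$ itself requires a cusp-specific input (e.g.\ your reflection to a class-$C$ domain, which once more needs monotone $b$). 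So both gaps trace back to the same missing idea: you must carry the exact normal form $y=c_ix^\alpha$ out of Hartman's theorem, as the paper does, rather than only its $C^1$ corollary. With that repair, your reflection-based density argument and your exhaustion-based compactness argument both go through and constitute a genuinely different (and arguably more self-contained) proof than the paper's appeal to the extension operator of Lemma \ref{lem:MP}.
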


The result is known if $\partial\Omega$ is of class $C$ (see \cite{MP01}) but, as noted above, the boundary of a Neumann domain does not need to have this property. If $\Omega$ is cracked and $\phi \in W^{1,2}(\Omega)$, its values on opposite sides of the crack $\eta$ need not be close, so we cannot hope to approximate it by a function in $C^1(\overline{\Omega})$. However, by choosing $t$ sufficiently large, we can ensure that $\overline{\Omega_t^c}$ is disjoint from $\eta$, and hence \eqref{tildec} holds.

The idea of the proof is to use Hartman's theorem (Proposition \ref{prop: Hartman}) to find a canonical description of the boundary near a cusp, and then apply the following lemma, which allows us to extend functions to a larger domain which still has a cusp but is of class $C$; c.f. the domains $\Omega_1$ and $\Omega_2$ in Figure \ref{fig:nonC}.

\begin{lem}\cite[Lem.~5.4.1/1~(p.~285)]{MP01}
\label{lem:MP}
Consider the domain
\[
	\tilde\Omega = \big\{(x,y) \in \R^2 : c_1 \vartheta(x) < y < c_2 \vartheta(x), \ 0 < x < 1 \big\}
\]
for some $c_1 < c_2$, where $\vartheta \in C^{0,1}([0,1])$ is an increasing function with $\vartheta(0) = 0$ and $\vartheta'(t) \to 0$ as $t \to 0$, and define
\begin{equation}
	G = \big\{ (x,y) \in \R^2 : |y| < M \vartheta(x), \ 0 < x < 1 \big\}
\end{equation}
for $M \geq \max\{ |c_1|, |c_2|\}$.
Then there exists a continuous extension operator $\mc{E} \colon W^{1,2}(\tilde\Omega) \to W^{1,2}(G)$.
\end{lem}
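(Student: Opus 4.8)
The plan is to construct $\mc E$ by reflecting $u$ across the bounding curves of $\tilde\Omega$ finitely many times, filling up $G$ strip by strip. First note that since $\vartheta$ is increasing with $\vartheta(0)=0$ we have $\vartheta \geq 0$ on $[0,1]$, and from $-M \leq c_1 < c_2 \leq M$ it follows that $\tilde\Omega \subset G$. The two boundary curves $y = c_i\vartheta(x)$ are Lipschitz graphs over $(0,1)$, since $\vartheta \in C^{0,1}([0,1])$. The essential idea is to reflect in the $y$-variable about these graphs. It is tempting instead to straighten the cusp by the change of variables $(x,y) \mapsto (x, y/\vartheta(x))$, turning $\tilde\Omega$ and $G$ into rectangles; however this map has slope $1/\vartheta(x) \to \infty$ at the cusp, and the resulting pullback norm carries a degenerate weight proportional to $\vartheta'/\vartheta$, which is unbounded (e.g. $\vartheta'/\vartheta = \alpha/x$ for $\vartheta = x^\alpha$). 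Reflection in $y$ avoids this entirely, as it has slope $\pm 1$ and is bi-Lipschitz.

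For $c \in \R$ define the reflection $R_c(x,y) = (x, 2c\vartheta(x) - y)$, which fixes the graph $\{y = c\vartheta(x)\}$ pointwise. Since $\vartheta$ is Lipschitz, $R_c$ is a bi-Lipschitz involution of $(0,1)\times\R$ with Jacobian determinant $-1$, so it preserves Lebesgue measure. For $w := u \circ R_c$ the chain rule gives $\partial_y w = -(\partial_y u)\circ R_c$ and $\partial_x w = (\partial_x u)\circ R_c + 2c\vartheta'(x)(\partial_y u)\circ R_c$, whence, using $|\vartheta'| \leq L := \|\vartheta'\|_{L^\infty}$ and the change of variables,
\[
	\int |\nabla w|^2 \leq C(L,c) \int |\nabla u|^2 , \qquad \int |w|^2 = \int |u|^2 ,
\]
the integrals being taken over the reflected region and over the source region respectively. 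This is the only place the hypothesis $\vartheta \in C^{0,1}$ enters; notably the stronger assumption $\vartheta'(t)\to 0$ is not needed for the $W^{1,2}$ estimate. Moreover, because $R_c$ fixes the interface pointwise, the trace of $w$ on $\{y = c\vartheta\}$ agrees with that of $u$, so $u$ and $w$ glue to a single $W^{1,2}$ function across this Lipschitz interface.

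To build $\mc E$, start with $u$ on $\tilde\Omega = \{c_1\vartheta < y < c_2\vartheta\}$ and reflect across the top curve $y = c_2\vartheta$, defining the extension to equal $u\circ R_{c_2}$ on $\{c_2\vartheta < y < (2c_2-c_1)\vartheta\}$ and to remain $u$ on $\tilde\Omega$. Reflecting the resulting strip about its new top curve doubles its height, and iterating finitely many times produces a $W^{1,2}$ function on $\{c_1\vartheta < y < M'\vartheta\}$ with $M' \geq M$; reflecting this strip downward about $y = c_1\vartheta$ finitely many more times reaches $\{-M''\vartheta < y < M'\vartheta\}$ with $M'' \geq M$. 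Since each reflection acts only on the newly created strip and leaves the previously defined values untouched, the final function $w$ satisfies $w|_{\tilde\Omega} = u$; and since each step is bounded by the estimate above, $\|w\|_{W^{1,2}} \leq C\|u\|_{W^{1,2}(\tilde\Omega)}$ with $C$ depending only on $L$, $M$, $c_1$, $c_2$. As $G \subset \{-M''\vartheta < y < M'\vartheta\}$, we set $\mc Eu := w|_G$, which is linear, bounded, and restricts to $u$ on $\tilde\Omega$.

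The one point requiring care is the gluing across interfaces that all emanate from the cusp at the origin. Over each region $\{x > \delta\}$ the interfaces are genuine Lipschitz graphs and the gluing is classical, with the $W^{1,2}$ bounds above uniform in $\delta$. To conclude that $\mc Eu \in W^{1,2}(G)$, rather than merely $W^{1,2}(G \cap \{x>\delta\})$ for every $\delta$, one uses that the origin is a single point, which has zero $H^1$-capacity in two dimensions and is therefore removable: a function lying in $L^2(G)$ with gradient in $L^2(G)$ and belonging to $W^{1,2}(G\setminus\{\bs 0\})$ lies in $W^{1,2}(G)$. This removability argument, together with the uniform reflection estimates, is the main technical obstacle; everything else is an elementary and explicit reflection construction.
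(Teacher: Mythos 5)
Your construction is correct, and it is worth noting that the paper does not actually prove this lemma at all --- it is quoted verbatim from Maz'ya--Poborchi \cite[Lem.~5.4.1/1]{MP01}. Your proof is therefore a genuine addition: a self-contained, elementary argument by iterated vertical reflection $R_c(x,y) = (x, 2c\vartheta(x)-y)$ across the Lipschitz graphs bounding the cusp, which is essentially the same mechanism as the quasi-reflection used in the cited source. Two features of your write-up deserve comment. First, your observation that only $\vartheta \in C^{0,1}$ is needed, and not $\vartheta'(t) \to 0$, is accurate and clarifying: the cusp condition is what makes the enlarged domain $G$ of class $C$ (which is where the paper subsequently applies the density and compactness results of \cite{Evans:1987, MP01}), but it plays no role in the extension estimate itself. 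Your rejection of the cusp-straightening map $(x,y)\mapsto(x,y/\vartheta(x))$ for the reason given (degenerate weight $\vartheta'/\vartheta$) is also sound.

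Second, the one blemish is your final ``removability'' step, which is both unnecessary and slightly misstated: the origin lies on $\partial G$, not in $G$, so $G \setminus \{\bs 0\} = G$ and the zero-capacity argument is vacuous as written. Fortunately you do not need it. Any $\phi \in C^\infty_0(G)$ has support at positive distance from $\partial G$, and since the sections of $G$ shrink as $x \to 0$ (because $|y| < M\vartheta(x)$ with $\vartheta(x) \to 0$), such a support is necessarily contained in $\{x \geq \delta\}$ for some $\delta > 0$. On $G \cap \{x > \delta\}$ the finitely many interfaces are genuine Lipschitz graphs with matching traces (your reflections fix each interface pointwise), so the classical gluing applies there, and together with your uniform-in-$\delta$ reflection estimates this exhaustion argument already identifies the piecewise-defined a.e.\ gradient as the distributional gradient of $\mc E u$ on all of $G$. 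In other words, the ``main technical obstacle'' you flag dissolves into the support observation; with that substitution your proof is complete as stated.
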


We will apply this lemma with $\vartheta(x) = x^\alpha$ for some $\alpha > 1$.

\begin{proof}[Proof of Proposition \ref{extens}]
We first prove \eqref{compprime} and \eqref{denseprime} for regular Neumann domains. Only the behavior near the cusps has to be investigated, as they are the only possible non-Lipschitz points on $\partial\Omega$. A cusp is either a maxima or a minima by Proposition~\ref{prop:angles-at-critical-pts}. Without loss of generality, let $\bs{c}\in\Max(f)$ be the only cusp on $\partial{\Omega}$.

We localize at $\bs{c}$ by taking a smooth cutoff function $\chi$, as in \eqref{chidef}, that equals $1$ in $\Omega_{t+\epsilon}^c$ and vanishes in $\Omega_t$, and hence is supported in $\Omega_{\bs c} := \Omega_{t}^c$. 
Now take $\phi\in W^{1,2}(\Omega)$. We write $\phi=\chi\phi+(1-\chi)\phi$ and observe that $\chi\phi,\,(1-\chi)\phi\in W^{1,2}(\Omega)$. Thus, it is sufficient to prove the statements for both functions separately. For the latter function the observation that it is supported in a Lipschitz domain implies both \eqref{compprime} and \eqref{denseprime} in Proposition \ref{extens}. 

For the former space we choose $t$ close to $1$ and employ Proposition \ref{prop: Hartman}. Let $\Phi$ be the resulting $C^1$-diffeomorphism and define $\tilde{\Omega}_{\bs{c}} = \Phi(\Omega_{\bs c})$. Owing to \eqref{bettermap}, the image in $\tilde{\Omega}_{\bs{c}}$ of the two boundary curves meeting at $\bs{c}$ are flow lines obeying $\partial_t \gamma =-\Hess f(\bs{c})\gamma$. These are generated by $\rme^{-t\Hess f(\bs{c})}x_0$ where $x_0$ is a suitable point on $\gamma$. An easy calculation as in \cite[Section~3]{fulling:14} and \cite[Proof~of~Prop.~4.1]{abbe:2018} shows that the flow lines near the origin may be parametrized in suitable coordinates by $\gamma(x)=(x,c x^{\alpha})$, where $\alpha>0$ depends only on the eigenvalues of $\Hess f(\bs{c})$ (in fact only on their ratio). This implies that near the origin, the domain $\tilde{\Omega}_{\bs{c}}$  
is described by
\be
(x,y)\in \tilde{\Omega}_{\bs{c}} \quad \Longleftrightarrow \quad c_1x^{\alpha} < y < c_2x^{\alpha} \ \text{ and } \ x > 0.
\ee

We can assume that $\alpha > 1$. (If $\alpha=1$, then $\tilde\Omega_{\bs c}$ is in fact Lipschitz near $\bs c$, so there is nothing to prove; if $\alpha<1$ we exchange $x$ and $y$ to obtain a similar description of the boundary with $\alpha$ replaced by $1/\alpha$.) Now Lemma \ref{lem:MP} says that there exists a continuous extension operator $\mc{E} \colon W^{1,2}(\tilde{\Omega}_{\bs{c}})\rightarrow W^{1,2}(G)$ where $\tilde{\Omega}_{\bs{c}}\subset G$ and near $\bs{0}$ the domain ${G}$ is characterized by
\be
(x,y)\in G \quad \Longleftrightarrow \quad |y|<M x^{\alpha}  \ \text{ and } \ x > 0,
\ee 
with $M$ large enough. Since the boundary $\partial G$ is of class $C$, we can now infer by \cite[Theorem~4.17,~p.~267]{Evans:1987} and \cite[Theorem~1.4.2/1,~p.28]{MP01} that $W^{1,2}(G)$ satisfies statements (\ref{compprime}) and (\ref{denseprime}) of the proposition. In particular, $W^{1,2}(G)\rightarrow L^2(G)$ is compact and $C^1(\overline{G})$ is dense in $W^{1,2}(G)$. 

Using the fact that $\Phi$ is a $C^1$-diffeomorphism, it is easily shown that the pull-back map
\be
\label{map}
\Phi^* \colon W^{1,2}(\tilde{\Omega}_{\bs{c}})\rightarrow W^{1,2}({\Omega}_{\bs{c}}), \quad \phi\mapsto \phi\circ \Phi,
\ee
is well defined and bijective, with
\[
	\frac{1}{C'}\|\phi\|^2_{W^{1,2}(\tilde{\Omega}_{\bs{c}})}<\|\phi\circ \Phi\|^2_{W^{1,2}({\Omega}_{\bs{c}})}<C'\|\phi\|^2_{W^{1,2}(\tilde{\Omega}_{\bs{c}})}
\]
for some $C'>0$. Therefore, the inclusion $W^{1,2}(\Omega_{\bs c}) \to L^2(\Omega_{\bs c})$ can be written as the composition of a compact operator
\[
	W^{1,2}(\Omega_{\bs c}) \xrightarrow{(\Phi^{-1})^*} W^{1,2}(\tilde \Omega_{\bs c}) \xrightarrow{\ \mc{E}\ } W^{1,2}(G) \longrightarrow L^2(G)
\]
and a bounded operator
\[
	L^2(G) \longrightarrow L^2(\tilde \Omega_{\bs c}) \xrightarrow{\ \Phi^*\ } L^2(\Omega_{\bs c})
\]
(where the first map is restriction), and hence is compact. This completes the proof of \eqref{compprime} for regular Neumann domains.

To prove \eqref{denseprime}, let $\phi \in W^{1,2}(\Omega_{\bs c})$, so that $\phi \circ \Phi^{-1} \in W^{1,2}(\tilde \Omega_{\bs c})$ and $\mc{E}(\phi \circ \Phi^{-1}) \in W^{1,2}(G)$. For any $\delta > 0$, there exists $\tilde\phi \in C^1(\overline{G})$ with $\|\tilde\phi - \mc{E}(\phi \circ \Phi^{-1})\|_{W^{1,2}(G)} < \delta$, and hence
\begin{align*}
	\big\| \tilde\phi\big|_{\tilde\Omega_{\bs c}} \circ \Phi - \phi \big\|_{W^{1,2}(\Omega_{\bs c})} 
	&\leq C' \big\| \tilde\phi\big|_{\tilde\Omega_{\bs c}}  - \phi \circ \Phi^{-1} \big\|_{W^{1,2}(\tilde\Omega_{\bs c})} \\
	&\leq C' \big\| \tilde\phi  - \mc{E}(\phi \circ \Phi^{-1}) \big\|_{W^{1,2}(G)} \\
	& < C'\delta.
\end{align*}
Since $\tilde\phi\big|_{\tilde\Omega_{\bs c}} \circ \Phi \in C^1(\overline{\Omega_{\bs c}})$, this completes the proof of \eqref{denseprime}.

We next prove \eqref{compprime} for cracked Neumann domains, using the decomposition \eqref{isoemb}. More precisely, using Lemma \ref{c1neumannline} we may dissect $\Omega$ as in \eqref{dissect} and, without loss of generality, assume that the cusp is located on the boundary of $\Omega_{\mathrm{r}}$, as in Figure \ref{fig:Neumann-domains-schematic}. Note that
\[
	W^{1,2}(\Omega)\rightarrow W^{1,2}(\Omega_{\mathrm{l}})\oplus W^{1,2}(\Omega_{\mathrm{r}})
	\rightarrow L^2(\Omega_{\mathrm{l}})\oplus L^2(\Omega_{\mathrm{r}}) = L^2(\Omega)
\]
and so it is enough to prove compactness of the embedding $W^{1,2}(\Omega_\bullet)\rightarrow L^2(\Omega_\bullet)$ for $\bullet = \mathrm{l},\mathrm{r}$. For $\bullet = \mathrm{l}$ this follows from the Lipschitz property of $\partial\Omega_{\mathrm{l}}$. For $\bullet = \mathrm{r}$ we observe that $\partial\Omega_{\mathrm{r}}$ is Lipschitz except at the cusp, and so the proof given above for regular domains applies.

Finally, we prove \eqref{dense}. For $0< t <1$ sufficiently close to $1$ we have $\Omega_t^c\subset\Omega_\bullet$ for either $\bullet = \mathrm{l}$ or $\mathrm{r}$ (the case $\bullet = \mathrm{r}$ is shown in Figure \ref{fig:Neumann-domains-dissection}, so we choose $t$ sufficiently close to $1$ and $\epsilon>0$ small enough that $\Omega_{\bs c} = \Omega_{t}^c \subset \Omega_\bullet$. Now let $\phi\in W^{1,2}(\Omega)$. Given $\delta>0$, there exists by \eqref{denseprime} a function $\phi_\delta \in C^1(\overline{\Omega_{\bs c}})$ such that $\|\phi - \phi_\delta\|_{W^{1,2}(\Omega_{\bs c})}<\delta$. Choosing a smooth cutoff function $\chi$ that equals $1$ in $\Omega_{t+\epsilon}^c$ and vanishes in $\Omega_t$, we define $\tilde \phi_\delta = \chi \phi_\delta + (1-\chi) \phi \in W^{1,2}(\Omega)$ and compute
\be
\ba
\|\phi-\tilde{\phi}_\delta\|_{W^{1,2}(\Omega)}
&=\| \chi (\phi - \phi_\delta) \|_{W^{1,2}(\Omega_{\bs c})} \\
&\leq K \|\phi-\phi_\delta\|_{W^{1,2}({\Omega_{\bs c}})} \\ &< K\delta,
\ea
\ee  
where $K$ is a constant depending only on $\chi$.
%
Finally, since $\supp\chi \subset \overline{\Omega_t^c}$, we have
\[
	\tilde\phi_\delta \big|_{\overline{\Omega_t^c}} = \chi \phi_\delta\big|_{\overline{\Omega_t^c}} \in C^1(\overline{\Omega_t^c}).
\]
This completes the proof.
\end{proof}
\section{The Neumann Laplacian on a Neumann domain} 
\label{sec:NeumannLaplacian}

In this section we define the Neumann Laplacian on a Neumann domain $\Omega$, and establish some of its fundamental properties, in particular proving Theorems \ref{thm:discrete} and \ref{thm:restriction}. This relies on the technical results of the previous section, namely Proposition \ref{extens}.

\subsection{Definition and proof of Theorem \ref{thm:discrete}}
\label{sec:Neumanndef}
We define the Neumann Laplacian in the usual way, via a symmetric bilinear form.
\begin{defn}
\label{def:NeuLap}
The Neumann Laplacian on an open set $\Omega \subset M$, denoted $\DN$, is the unique self-adjoint operator corresponding to the bilinear form 
\begin{equation}
\label{form}
	a(\psi,\phi) := \int\limits_{\Omega}\langle\grad\psi, \grad\phi\rangle, \quad\mathcal{D}(a):=W^{1,2}(\Omega).
\end{equation}
\end{defn}
More precisely, $\DN$ is an unbounded operator on $L^2(\Omega)$, with domain
\begin{align}
\label{delta:domain}
	\mathcal{D}(\DN) = \big\{ \psi \in W^{1,2}(\Omega) : \exists f_\psi \in L^2(\Omega) \text{ with }   a(\psi,\phi) = \left<f_\psi, \phi\right>_{L^2(\Omega)} \forall \phi \in W^{1,2}(\Omega) \big\},
\end{align}
and for any $\psi \in \mathcal{D}(\DN)$ we have $\DN\psi = f_\psi$.
The existence and uniqueness of such an operator follows immediately from the completeness of the form domain $\mc{D}(a) = W^{1,2}(\Omega)$ and standard theory of self-adjoint operators, for instance \cite[Thm.~VIII.15]{ReeSim72}. If $\psi \in \mc{D}(\DN)$, then \eqref{delta:domain} implies
\[
	\int\limits_\Omega \left<\grad \psi, \grad \phi \right> = \int\limits_\Omega (\DN \psi) \phi
\]
for all $\phi \in W^{1,2}_0(\Omega)$, and hence $\Delta \psi = \DN \psi \in L^2(\Omega)$. That is, $\DN$ acts as the weak Laplace--Beltrami operator $\Delta$ defined in \eqref{weakBT}.

The next result is nontrivial, and relies on the special geometric structure of Neumann domains.

\begin{prop}
\label{prop:spectral-properties-of-domain}
If $\Omega \subset M$ is a Neumann domain for a Morse function, then $\DN$ has compact resolvent, and hence has purely discrete spectrum 
$\sigma(\DN)\subset [0,\infty)$. 
\end{prop}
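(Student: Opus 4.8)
The plan is to deduce discreteness of the spectrum from the compactness of the embedding $W^{1,2}(\Omega) \to L^2(\Omega)$, which is exactly Proposition \ref{extens}\eqref{compprime}. This is the standard route for form-defined self-adjoint operators: once one knows the form domain embeds compactly into the ambient Hilbert space, the associated operator automatically has compact resolvent, and a self-adjoint operator with compact resolvent has purely discrete spectrum consisting of isolated eigenvalues of finite multiplicity. Non-negativity is immediate from the form $a(\psi,\psi) = \int_\Omega |\grad\psi|^2 \geq 0$, so the spectrum lies in $[0,\infty)$.

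First I would recall how $\DN$ is built from the form $a$ in Definition \ref{def:NeuLap}, and note that $a$ is closed precisely because its domain $W^{1,2}(\Omega)$ is complete. Then I would establish the compactness of the resolvent. The cleanest way is to show that the resolvent $(\DN + I)^{-1} \colon L^2(\Omega) \to L^2(\Omega)$ factors as a bounded map into the form domain followed by the compact inclusion. Concretely, given $g \in L^2(\Omega)$, set $\psi = (\DN + I)^{-1} g \in \mc{D}(\DN) \subset W^{1,2}(\Omega)$. From the definition of the operator one has
\begin{equation*}
	a(\psi,\phi) + \langle \psi, \phi\rangle_{L^2(\Omega)} = \langle g, \phi\rangle_{L^2(\Omega)} \quad \text{for all } \phi \in W^{1,2}(\Omega).
\end{equation*}
Taking $\phi = \psi$ gives $\|\psi\|_{W^{1,2}(\Omega)}^2 = a(\psi,\psi) + \|\psi\|_{L^2(\Omega)}^2 = \langle g,\psi\rangle_{L^2(\Omega)} \leq \|g\|_{L^2(\Omega)} \|\psi\|_{L^2(\Omega)} \leq \|g\|_{L^2(\Omega)} \|\psi\|_{W^{1,2}(\Omega)}$, so $\|\psi\|_{W^{1,2}(\Omega)} \leq \|g\|_{L^2(\Omega)}$. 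Thus $(\DN + I)^{-1}$ is bounded as a map $L^2(\Omega) \to W^{1,2}(\Omega)$. Composing with the compact inclusion $W^{1,2}(\Omega) \hookrightarrow L^2(\Omega)$ from Proposition \ref{extens}\eqref{compprime} shows that the resolvent is compact as an operator on $L^2(\Omega)$.

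Having compact resolvent, I would invoke the standard spectral theorem for self-adjoint operators with compact resolvent (e.g. \cite[Thm.~XIII.64]{ReeSim72} or the analogous statement), which yields a purely discrete spectrum of isolated eigenvalues of finite multiplicity, accumulating only at $+\infty$. Combined with $\sigma(\DN) \subset [0,\infty)$ from non-negativity of $a$, this is the desired conclusion.

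The main obstacle is not in this proof itself, which is entirely routine once the compact embedding is in hand; rather, all the genuine difficulty has been pushed into Proposition \ref{extens}\eqref{compprime}, whose proof must cope with the possible cusps and cracks on $\partial\Omega$ that prevent the use of off-the-shelf Rellich--Kondrachov-type compactness results. In other words, the role of this proposition is simply to package the hard geometric and Sobolev-theoretic work of the previous section into the clean operator-theoretic statement, and I would emphasize in the writeup that the only nontrivial input is the compactness established in Proposition \ref{extens}.
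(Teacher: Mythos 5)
Your proposal is correct and takes essentially the same approach as the paper: the paper's proof is a one-line appeal to Proposition \ref{extens}\eqref{compprime} together with \cite[Thm.~XIII.64]{reed1978methods}, and your explicit factorization of the resolvent through the compact embedding is simply the proof of that cited theorem written out in detail. Nothing further is needed.
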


\begin{proof}
Proposition~\ref{extens}\eqref{compprime} says that the form domain $W^{1,2}(\Omega)$ is compactly embedded in $L^2(\Omega)$, so the result follows from \cite[Thm.~XIII.64]{reed1978methods}.
\end{proof}

\subsection{Domain of the Neumann Laplacian}
\label{sec:domain}

We now describe the domain of the Neumann Laplacian, working towards the proof of Theorem \ref{thm:restriction}. Recalling the truncated domain $\Omega_t$ introducted in Section \ref{sec:truncate}, and the decomposition $\partial\Omega_t=\gamma_{\pm,t}\cup\gamma_{0,t}$ of its boundary in \eqref{gammapmdef}, we have the following.
\begin{prop}
\label{prop:domain_of_Laplacian} 
Let $\Omega$ be a Neumann domain of a Morse function $f$. The domain of the Neumann Laplacian is given by
\begin{equation}
\label{domain-1}
\ba
	\mathcal{D}(\DN) = \Big\{ \psi \in W^{1,2}(\Omega)  : \Delta& \psi \in L^2(\Omega),  \partial_{\nu}(\psi|_{\Omega_t})\big|_{{\gamma_{0,t}}^\mathrm{o}} = 0  \text{ for all } 0<t<1 \\
 & \text{and } \lim_{t \to 1} \int_{\gamma_{\pm,t}} \partial_{\nu}(\psi|_{\Omega_t}) \phi = 0 \text{ for all } \phi \in W^{1,2}(\Omega)  \Big\}
\ea
\end{equation}	
if $\Omega$ is regular, and
\be
\ba
\label{domain-1b}
\mathcal{D}(\DN) = \Big\{\psi &\in W^{1,2}(\Omega):  \Delta \psi \in L^2(\Omega), \ \partial_{\nu}(\psi|_{\Omega_t \cap \Omega_{\mathrm{l}}})\big|_{{\gamma_{0,t}}^\mathrm{o}} = \partial_{\nu}(\psi|_{\Omega_t \cap \Omega_{\mathrm{r}}})\big|_{{\gamma_{0,t}}^\mathrm{o}} = 0 \\
  & \text{ for all } 0<t<1, \text{ and }  \lim_{t \to 1} \int_{\gamma_{\pm,t}} \partial_{\nu}(\psi|_{\Omega_t}) \phi = 0 \text{ for all } \phi \in W^{1,2}(\Omega) \Big\}
\ea
\ee
if $\Omega$ is cracked.
\end{prop}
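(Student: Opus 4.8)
The plan is to read membership in $\mathcal D(\DN)$ straight off its definition \eqref{delta:domain}: a function $\psi \in W^{1,2}(\Omega)$ lies in $\mathcal D(\DN)$ precisely when there is $f_\psi \in L^2(\Omega)$ with $a(\psi,\phi) = \langle f_\psi,\phi\rangle_{L^2(\Omega)}$ for all $\phi \in W^{1,2}(\Omega)$. Testing against $\phi \in C^\infty_0(\Omega)$ forces $f_\psi = \Delta\psi$ in the weak sense \eqref{weakBT}, so $\Delta\psi \in L^2(\Omega)$ is common to both sides of the asserted identity and may be assumed throughout; the remaining content is purely a statement about boundary behavior. Since $\partial\Omega$ is not Lipschitz, Green's identity \eqref{ggsmooth} cannot be applied on $\Omega$ directly, so I would work on the truncated domains $\Omega_t$ of \eqref{omegat}. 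These are Lipschitz: by Proposition \ref{prop:angles-at-critical-pts}\eqref{enu:prop-angles-at-critical-pts-3} the level curves $\gamma_{\pm,t}$ meet $\pO$ at angle $\tfrac\pi2$ (or $\tfrac\pi4$ at the finitely many exceptional times, which are still Lipschitz), while any saddle on $\gamma_{0,t}$ has angle $\tfrac\pi2$ by Proposition \ref{prop:angles-at-critical-pts}\eqref{enu:prop-angles-at-critical-pts-1}. Thus $\partial_\nu(\psi|_{\Omega_t}) \in H^{-1/2}(\partial\Omega_t)$ is well defined and \eqref{ggsmooth} holds on each $\Omega_t$.

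Then I would establish both inclusions by comparing $a(\psi,\phi)$ with $\int_\Omega(\Delta\psi)\phi$ through Green's identity on $\Omega_t$,
\[
\int_{\Omega_t}\langle\grad\psi,\grad\phi\rangle - \int_{\Omega_t}(\Delta\psi)\phi = \int_{\partial\Omega_t}\phi\,\partial_\nu(\psi|_{\Omega_t}).
\]
Because $\grad\psi,\grad\phi,\Delta\psi \in L^2(\Omega)$ and $|\Omega_t^c| \to 0$ as $t \to 1$, absolute continuity of the integral gives $\int_{\Omega_t}\langle\grad\psi,\grad\phi\rangle \to a(\psi,\phi)$ and $\int_{\Omega_t}(\Delta\psi)\phi \to \int_\Omega(\Delta\psi)\phi$. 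For the forward inclusion, suppose $\psi \in \mathcal D(\DN)$, so $a(\psi,\phi) = \int_\Omega(\Delta\psi)\phi$ for all $\phi$. Fixing $t$ and choosing $\phi$ supported away from $\overline{\Omega_t^c}$ — hence vanishing on $\gamma_{\pm,t} \subset \overline{\Omega_t^c}$ while realizing an arbitrary trace in $C^\infty_0(\gamma_{0,t}^{\mathrm o})$ — localizes both volume integrals to $\Omega_t$, so the displayed left-hand side vanishes and the boundary term collapses to $\int_{\gamma_{0,t}}\phi\,\partial_\nu(\psi|_{\Omega_t}) = 0$; by \eqref{equality} this yields $\partial_\nu(\psi|_{\Omega_t})|_{\gamma_{0,t}^{\mathrm o}} = 0$ for every $t$. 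Feeding this condition back for a general $\phi$ reduces the boundary term to the level-curve integral over $\gamma_{\pm,t}$, whose limit as $t \to 1$ equals $a(\psi,\phi) - \int_\Omega(\Delta\psi)\phi = 0$, which is the limit condition. The converse reverses the chain: the two boundary conditions make the right-hand side of the display vanish in the limit, forcing $a(\psi,\phi) = \int_\Omega(\Delta\psi)\phi$ for all $\phi$, i.e. $\psi \in \mathcal D(\DN)$.

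The step I expect to be the main obstacle is splitting the boundary integral $\int_{\partial\Omega_t} = \int_{\gamma_{0,t}} + \int_{\gamma_{\pm,t}}$, which is illegitimate at the critical regularity $s = \tfrac12$: a priori $\partial_\nu(\psi|_{\Omega_t})|_{\gamma_{\pm,t}}$ lies only in $H^{-1/2}(\gamma_{\pm,t})$, not in the dual $\widetilde H^{-1/2}(\gamma_{\pm,t}) = H^{1/2}(\gamma_{\pm,t})^*$ needed to pair with $\phi|_{\gamma_{\pm,t}} \in H^{1/2}$; c.f. \eqref{nest}. This is resolved by Lemma \ref{lem:split} applied with $\Gamma_0 = \gamma_{0,t}^{\mathrm o}$ and $\{\Gamma_1,\Gamma_2\} = \{\gamma_{+,t},\gamma_{-,t}\}$, which are separated by $\gamma_{0,t}^{\mathrm o}$ and have disjoint closures: once $\partial_\nu(\psi|_{\Omega_t})$ is known to vanish on $\gamma_{0,t}^{\mathrm o}$, the lemma guarantees both that the restrictions to $\gamma_{\pm,t}$ land in $\widetilde H^{-1/2}$ and that the splitting holds with no leftover $\gamma_{0,t}$ term (the finitely many corner and saddle points being negligible in $H^{-1/2}$, exactly as in the proof of Lemma \ref{lem:split}). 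Thus the $\gamma_{0,t}^{\mathrm o}$-condition is not merely one of the two boundary conditions but the structural hypothesis that makes the level-curve pairing well defined in the first place.

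Finally, for a cracked $\Omega$ I would run the same argument after dissecting along the Lipschitz curve $\tilde\eta$ of \eqref{dissect}, using the isometric embedding \eqref{isoemb} to reduce to the two pieces $\Omega_t \cap \Omega_{\mathrm l}$ and $\Omega_t \cap \Omega_{\mathrm r}$, each of which is Lipschitz away from a possible cusp that is removed by the truncation. Green's identity is applied on each piece separately, which is precisely why the normal derivatives appear separately on $\Omega_t \cap \Omega_{\mathrm l}$ and $\Omega_t \cap \Omega_{\mathrm r}$ in \eqref{domain-1b}; the contributions along the interior dissecting curve $\tilde\eta$ cancel, since $\psi \in W^{1,2}(\Omega)$ has matching traces from the two sides and $\Delta\psi \in L^2(\Omega)$ forces the two normal derivatives to be equal and opposite there. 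The extraction of the $\gamma_{0,t}^{\mathrm o}$ conditions and of the limit condition is then identical to the regular case.
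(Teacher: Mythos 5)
Your proposal is correct and follows essentially the same route as the paper's proof: truncation to Lipschitz domains, Green's identity on $\Omega_t$ with dominated convergence handling the volume terms, derivation of the $\gamma_{0,t}$ Neumann condition from test functions vanishing near the level curves (the content of the paper's Lemma \ref{auxlem2}), the crucial application of Lemma \ref{lem:split} --- enabled precisely by that vanishing --- to justify splitting the $H^{-\frac12}$ boundary pairing, and dissection along $\tilde\eta$ with cancellation of the opposing normal derivatives (the paper's Lemma \ref{auxlem1}) in the cracked case. The only cosmetic difference is that you write out the regular case in detail and sketch the cracked case, whereas the paper does the reverse.
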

That is, to be in the domain of $\DN$, a function must satisfy Neumann boundary conditions on the Lipschitz part of the boundary, as well as a limiting boundary condition at each cusp. While this completely characterizes the domain of $\DN$, the limiting boundary conditions on $\gamma_{\pm,t}$ may be difficult to check in practice. Therefore, in the following section we will give simple criteria (Proposition \ref{prop:intlimit} and Corollary \ref{cor:intlimit}) which guarantee these limiting conditions are satisfied.

\begin{rem}
Our techniques actually give a more general result, not just valid for Neumann domains. The key points are that $\pO$ is Lipschitz except for a finite number of cusps and cracks, and the cracks admit a Lipschitz continuation. A stronger result will be given below (in Remark \ref{rem:approx}) that relies on the detailed structure of the cusps, which for Neumann domains is a consequence of Hartman's theorem.
\end{rem}

In proving the proposition, we must take into account the fact that $\Omega_\mathrm{l}$ and $\Omega_\mathrm{r}$ need not be Lipschitz; see 
Figure \ref{fig:Neumann-domains-dissection}, where $\Omega_\mathrm{r}$ has a cusp on its boundary. We therefore combine the dissection and truncation of Sections \ref{secdisec} and \ref{sec:truncate}, respectively. The resulting domains are shown in Figure \ref{fig:Neumann-domains-dissection-truncation}. Note that the boundaries of $\Omega_t \cap \Omega_\mathrm{l}$ and $\Omega_t \cap \Omega_\mathrm{r}$ can be partitioned into three parts: $\gamma_{\pm,t}$ coming from the truncation; $\tilde\eta$ coming from the dissection; and $\gamma_{0,t}$, coming from the original domain $\Omega$. We emphasize that the dissection \eqref{dissect} is an auxiliary construction, and our analysis does not depend on the specific choice of $\tilde{\eta}$.

Since $\eta\cup{\tilde{\eta}}$ has a Lipschitz neighbourhood in both $\Omega_t \cap \Omega_\mathrm{l}$ and $\Omega_t \cap \Omega_{\mathrm{r}}$, cf. Figure \ref{fig:Neumann-domains-dissection-truncation}, we have
\be
\label{eqint2}
	(\phi|_{\Omega_{\mathrm{l}}})\big|_{\tilde{\eta}^{\mathrm o}} = (\phi|_{\Omega_{\mathrm{r}}})\big|_{\tilde{\eta}^{\mathrm o}} \in H^{\frac12}(\tilde\eta^{\mathrm o}) \text{ for } \phi\in W^{1,2}(\Omega),
\ee
with ${\cdot}^{\mathrm{o}}$ denoting the interior in $\gamma_{0,t}\cup\tilde{\eta}$. Therefore, the map
\be
\label{modnormal}
\ba
W^{1,2}(\Omega) &\rightarrow H^{\frac12}({\eta}^{\mathrm{o}})\oplus H^{\frac12}({\eta}^{\mathrm{o}})\oplus H^{\frac12}(\tilde{\eta}^{\mathrm o}) \\
\phi &\mapsto \big( (\phi|_{\Omega_{\mathrm{l}}})|_{{\eta}^{\mathrm{o}}}, (\phi|_{\Omega_{\mathrm{r}}})|_{{\eta}^{\mathrm{o}}}, \phi|_{\tilde{\eta}^{\mathrm o}} \big)
\ea
\ee
is well defined, where $\phi|_{\tilde{\eta}^{\mathrm o}}$ denotes the common value in \eqref{eqint2}. We first analyze the normal derivatives on $\tilde{\eta}$.

\begin{lem}
\label{auxlem1}
Let $\Omega$ be a cracked Neumann domain. If $\psi\in W^{1,2}(\Omega)$ and $\Delta \psi \in L^2(\Omega)$, then
\be
\label{eqint3}
	\partial_{\nu}(\psi|_{\Omega_{\mathrm{l}}})\big|_{\tilde{\eta}^{\rm o}} + \partial_{\nu}(\psi|_{\Omega_{\mathrm{r}}})\big|_{\tilde{\eta}^{\rm o}} = 0 \in H^{-\frac12}(\tilde{\eta}^{\rm o}).
\ee

\end{lem}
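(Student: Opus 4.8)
The plan is to prove \eqref{eqint3} by testing the distributional identity against smooth functions supported on $\tilde\eta^{\rm o}$ and invoking the characterization \eqref{equality}. The key geometric observation is that $\tilde\eta$ is a purely auxiliary cut, lying in the interior of $\Omega$ away from the cusp, the crack $\eta$, and the rest of $\partial\Omega$, so that $\psi$ is genuinely regular across it. The left-hand side of \eqref{eqint3} is then the \emph{jump} of the normal derivative across $\tilde\eta$; since the outward normals of $\Omega_{\mathrm l}$ and $\Omega_{\mathrm r}$ along $\tilde\eta$ point in opposite directions, this jump is expressed as a sum rather than a difference, and it must vanish.

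First I would fix an arbitrary $\phi \in C^\infty_0(\tilde\eta^{\rm o})$ and extend it to a function $\Phi \in C^\infty_0(\Omega)$ whose support is a thin tubular neighbourhood of $\supp \phi$, chosen small enough to avoid $\eta$, $\partial\Omega$, and the cusp. Because $\eta \cup \tilde\eta$ admits a Lipschitz neighbourhood in each of $\Omega_{\mathrm l}$ and $\Omega_{\mathrm r}$ (Lemma~\ref{c1neumannline}), I may take $t$ close enough to $1$ that $\supp\Phi \cap \Omega \subset \Omega_t$, so that $\supp\Phi$ meets the Lipschitz domains $\Omega_t \cap \Omega_{\mathrm l}$ and $\Omega_t \cap \Omega_{\mathrm r}$ only along the interior part $\tilde\eta^{\rm o}$ of their common boundary.

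Next I would apply the weak Green identity \eqref{ggsmooth} on $\Omega_t \cap \Omega_{\mathrm l}$ and on $\Omega_t \cap \Omega_{\mathrm r}$, with $\psi$ and the test function $\Phi$. Here one notes that $\Delta(\psi|_{\Omega_\bullet}) = (\Delta\psi)|_{\Omega_\bullet} \in L^2$ by locality of the weak Laplacian, so the hypotheses of \eqref{ggsmooth} are met. Since $\Phi$ vanishes near $\gamma_{\pm,t}$ and $\gamma_{0,t}$, the only surviving boundary contributions are on $\tilde\eta^{\rm o}$, giving
\[
	\int_{\Omega_t \cap \Omega_\bullet} \langle \grad \psi, \grad \Phi \rangle = \int_{\Omega_t \cap \Omega_\bullet} (\Delta \psi)\, \Phi + \partial_\nu(\psi|_{\Omega_\bullet})\big|_{\tilde\eta^{\rm o}}(\phi)
\]
for $\bullet = \mathrm l, \mathrm r$, where I used \eqref{eqint2} to identify both one-sided traces of $\Phi$ on $\tilde\eta^{\rm o}$ with $\phi$. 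Adding the two identities, and using that $\tilde\eta$ has measure zero together with $\supp\Phi \cap \Omega \subset \Omega_t$, the bulk terms combine into $\int_\Omega \langle \grad\psi, \grad\Phi\rangle$ and $\int_\Omega (\Delta\psi)\Phi$.

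Finally, since $\Phi \in C^\infty_0(\Omega) \subset W^{1,2}_0(\Omega)$, the definition \eqref{weakBT} of the weak Laplacian gives $\int_\Omega \langle\grad\psi,\grad\Phi\rangle = \int_\Omega (\Delta\psi)\Phi$, so the two bulk terms cancel and I am left with
\[
	\partial_\nu(\psi|_{\Omega_{\mathrm l}})\big|_{\tilde\eta^{\rm o}}(\phi) + \partial_\nu(\psi|_{\Omega_{\mathrm r}})\big|_{\tilde\eta^{\rm o}}(\phi) = 0.
\]
As $\phi \in C^\infty_0(\tilde\eta^{\rm o})$ is arbitrary, \eqref{equality} yields \eqref{eqint3}. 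I expect the main obstacle to be the bookkeeping around the non-Lipschitz cusp: one must truncate to the Lipschitz domains $\Omega_t \cap \Omega_\bullet$ where \eqref{ggsmooth} is valid, and verify that the cutoff $\Phi$ localizes all boundary terms onto $\tilde\eta^{\rm o}$; the remaining steps are routine once the geometric set-up and the sign convention for the two opposing outward normals are in place.
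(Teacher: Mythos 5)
Your proof is correct and follows essentially the same route as the paper's: test the weak-Laplacian identity against functions in $C^\infty_0(\Omega)$, apply Green's identity \eqref{ggsmooth} on the two pieces of the dissection so that only the $\tilde\eta$ boundary terms survive, and conclude via \eqref{equality} since every element of $C^\infty_0(\tilde\eta^{\rm o})$ arises as the trace of such a test function. The only difference is one of bookkeeping: you first truncate to the Lipschitz domains $\Omega_t \cap \Omega_{\mathrm l}$ and $\Omega_t \cap \Omega_{\mathrm r}$ before invoking \eqref{ggsmooth}, which is a slightly more careful treatment of the cusp than the paper's direct application on $\Omega_{\mathrm l}$ and $\Omega_{\mathrm r}$ (where the test function's vanishing near the cusp is what implicitly justifies the step).
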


\begin{proof}
The hypothesis $\Delta \psi \in L^2(\Omega)$ means
\be
	\int\limits_{\Omega}\langle\grad \psi,\grad\phi\rangle = \langle\Delta \psi,\phi\rangle_{L^2(\Omega)}
\ee
for all $\phi \in C^\infty_0(\Omega)$. Together with Green's formula \eqref{ggsmooth}, this implies
\begin{equation}
\label{oneta}
	\int_{\tilde{\eta}}\big(\partial_{\nu}(\psi|_{\Omega_{\mathrm{l}}}) + \partial_{\nu}(\psi|_{\Omega_{\mathrm{r}}}) \big) \phi = 0
\end{equation}
for all $\phi \in C^\infty_0(\Omega)$. Since any function in $C^\infty_0(\tilde\eta)$ can be realized as $\phi|_{\tilde\eta}$ for some $\phi \in C^\infty_0(\Omega)$, the result follows from \eqref{equality}.
\end{proof}

\begin{figure}[ht]
\centering{}\includegraphics[width=0.6\textwidth]{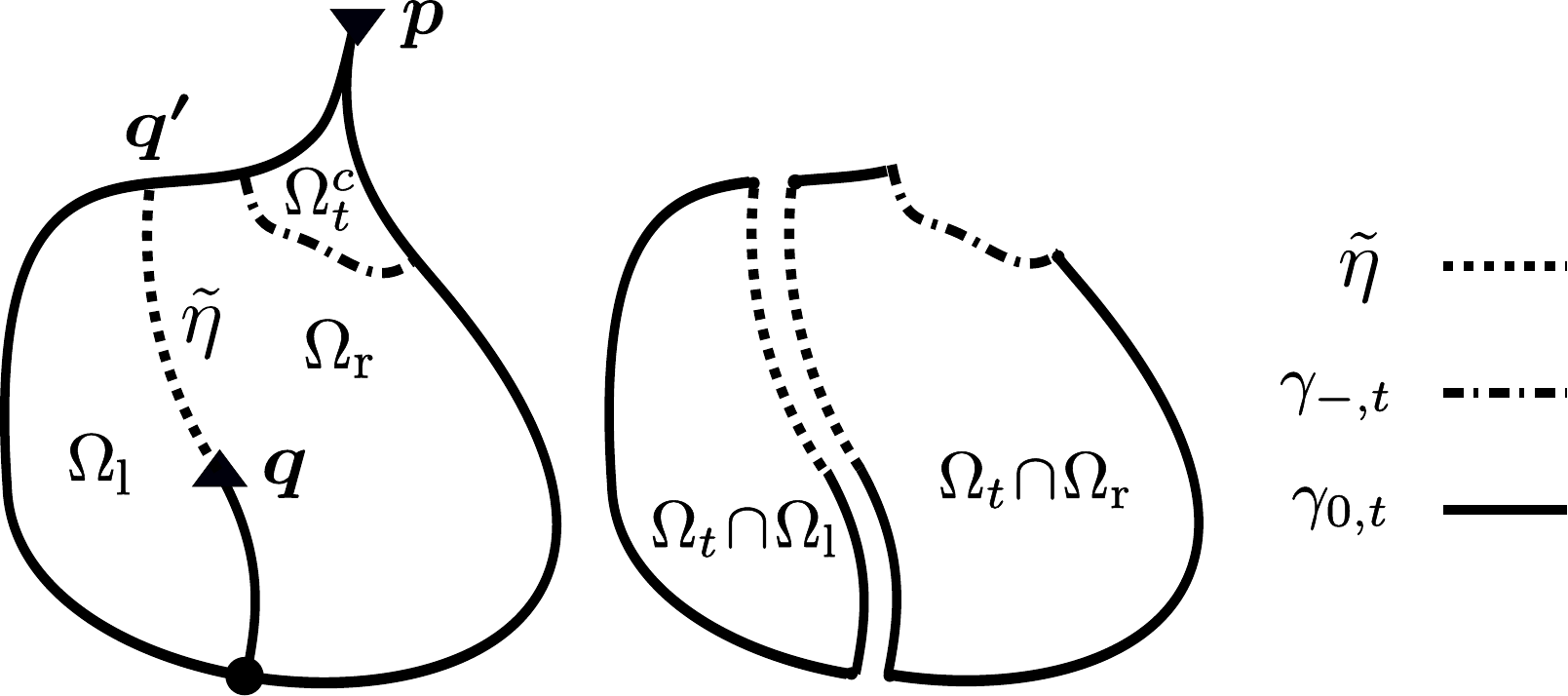}
\caption{The dissected and truncated domains appearing in the proof of Proposition \ref{prop:domain_of_Laplacian}. Here $\gamma_{\pm,t}$ is a result of the truncation, $\tilde\eta$ is from the dissection, and $\gamma_{0,t}$ is the part of the original boundary, $\partial\Omega$, that remains after the truncation. }
\label{fig:Neumann-domains-dissection-truncation}
\end{figure}

We next analyze the normal derivative on the Lipschitz part of the boundary, $\gamma_{0,t}$.

\begin{lem}
\label{auxlem2}
If $\psi \in \mathcal{D}(\Delta^N_{\Omega})$, then
\be
\label{neubound}
	\partial_{\nu}(\psi|_{\Omega_t \cap \Omega_\bullet})\big|_{\gamma_{0,t}^\mathrm{o}} = 0 \in H^{-\frac12}\big((\partial(\Omega_t\cap\Omega_\bullet) \cap \gamma_{0,t})^\mathrm{o}\big)
\ee
for any $0 < t < 1$, where $\bullet = \mathrm{l,r}$.
\end{lem}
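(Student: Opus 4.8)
The plan is to prove the Neumann condition on $\gamma_{0,t}$ by localizing near this Lipschitz portion of the boundary and exploiting the fact that the full boundary form vanishes for test functions supported away from the cusps and the crack. First I would fix $0 < t < 1$ and $\bullet \in \{\mathrm{l},\mathrm{r}\}$, and set $N = \Omega_t \cap \Omega_\bullet$. By construction (see Figure \ref{fig:Neumann-domains-dissection-truncation}), $\partial N$ decomposes as $\overline{\gamma_{0,t}} \cup \overline{\gamma_{\pm,t}} \cup \overline{\tilde\eta}$ (dropping any pieces not present for the regular case), and $N$ has Lipschitz boundary except possibly at a cusp lying on $\gamma_{\pm,t}$. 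The key point is that $\gamma_{0,t}^{\mathrm o}$ is separated from both the cusp and from $\tilde\eta$, so I can work with test functions whose support meets $\partial N$ only in the interior of $\gamma_{0,t}$.

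Since $\psi \in \mathcal{D}(\DN)$, the defining relation \eqref{delta:domain} gives $a(\psi,\phi) = \langle \DN\psi, \phi\rangle_{L^2(\Omega)}$ for all $\phi \in W^{1,2}(\Omega)$, and as noted after Definition \ref{def:NeuLap} we have $\Delta\psi = \DN\psi \in L^2(\Omega)$. The strategy is to take an arbitrary $g \in C^\infty_0(\gamma_{0,t}^{\mathrm o})$ and extend it to a test function on the full Neumann domain $\Omega$ that is supported in a Lipschitz neighbourhood of $\gamma_{0,t}^{\mathrm o}$ and vanishes near the cusps, the curves $\gamma_{\pm,t}$, and the crack $\tilde\eta$. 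Concretely, I would first extend $g$ to a function $\phi \in W^{1,2}(N)$ whose trace on $\partial N$ is supported in $\gamma_{0,t}^{\mathrm o}$, then extend by zero across $\tilde\eta$ (legitimate because $\phi$ vanishes near $\tilde\eta$) to obtain an element $\Phi \in W^{1,2}(\Omega)$ via the isometric embedding \eqref{isoemb}. Applying Green's identity \eqref{ggsmooth} on the Lipschitz-away-from-cusp domain $N$ and subtracting the weak Laplacian identity yields
\[
	\int_{\partial N} \phi\, \partial_\nu(\psi|_N) = a_N(\psi,\phi) - \langle \Delta\psi, \phi\rangle_{L^2(N)} = 0,
\]
where the right-hand side vanishes because $\Phi \in W^{1,2}(\Omega)$ feeds into the form identity for $\psi \in \mathcal{D}(\DN)$ and the contributions from $\Omega \setminus N$ drop out by the support properties of $\Phi$. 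Since the trace of $\phi$ is supported in $\gamma_{0,t}^{\mathrm o}$ and equals $g$ there, the boundary integral reduces to $\partial_\nu(\psi|_N)\big|_{\gamma_{0,t}^{\mathrm o}}(g) = 0$. As $g \in C^\infty_0(\gamma_{0,t}^{\mathrm o})$ was arbitrary, \eqref{equality} gives $\partial_\nu(\psi|_N)\big|_{\gamma_{0,t}^{\mathrm o}} = 0$ in $H^{-\frac12}(\gamma_{0,t}^{\mathrm o})$, which is \eqref{neubound}.

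The main obstacle will be justifying that the boundary integral over $\partial N$ genuinely localizes to $\gamma_{0,t}^{\mathrm o}$ and that the cusp and crack contributions do not interfere. The cusp lies on $\gamma_{\pm,t}$, away from $\gamma_{0,t}^{\mathrm o}$, so by choosing $\phi$ (and hence $\Phi$) to vanish in a neighbourhood of the cusp I sidestep the non-Lipschitz point entirely and apply \eqref{ggsmooth} only on a region where $\partial N$ is Lipschitz; this is exactly why the localized test function must have compact support in $\gamma_{0,t}^{\mathrm o}$ rather than on all of $\gamma_{0,t}$. The crack is handled by the dissection: because $\phi$ vanishes near $\tilde\eta$, extending by zero produces a legitimate $W^{1,2}(\Omega)$ function, and no normal-derivative contribution from $\tilde\eta$ appears in this localized computation (the full crack analysis is the content of Lemma \ref{auxlem1}, which is separate). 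The care needed here is precisely the distinction between $H^s$ and $\widetilde H^s$ spaces emphasized after \eqref{nest}: restricting attention to test functions in $C^\infty_0(\gamma_{0,t}^{\mathrm o})$ means I only ever test $\partial_\nu(\psi|_N)$ against functions supported in the open arc, so the pairing is unambiguous and \eqref{equality} applies directly.
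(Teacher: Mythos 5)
Your proposal is correct and follows essentially the same route as the paper's proof: test functions on $N=\Omega_t\cap\Omega_\bullet$ whose traces vanish on $\tilde\eta$ and $\gamma_{\pm,t}$, extension by zero to an element of $W^{1,2}(\Omega)$, the defining identity for $\psi\in\mathcal{D}(\DN)$ combined with Green's formula \eqref{ggsmooth} on $N$, and then a duality argument on $\gamma_{0,t}^{\mathrm o}$ (you test against $C^\infty_0(\gamma_{0,t}^{\mathrm o})$ and invoke \eqref{equality}, while the paper identifies the image of the trace map as $\widetilde H^{\frac12}\big((\partial N\cap\gamma_{0,t})^{\mathrm o}\big)$ -- a cosmetic difference). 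One minor correction: the truncation was designed precisely so that $N$ is Lipschitz -- the cusp at the extremum is cut away and $\gamma_{\pm,t}$ meets $\partial\Omega$ at angle $\tfrac{\pi}{2}$ (or $\tfrac{\pi}{4}$ at exceptional $t$) -- so your precaution of keeping test functions away from a supposed cusp on $\gamma_{\pm,t}$ is unnecessary, though harmless.
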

\begin{proof} 
We prove the result for $\Omega_{\mathrm l}$, the argument for $\Omega_{\mathrm r}$ is identical. For any test function $\phi\in W^{1,2}(\Omega_t \cap \Omega_{\mathrm l})$ with $\phi|_{\tilde\eta} = 0$ and $\phi|_{\gamma_{+,t}} = 0$, we get from Green's formula \eqref{ggsmooth} that
\[
	\int\limits_{\partial(\Omega_t \cap \Omega_{\mathrm l}) \cap \gamma_{0,t}} \partial_{\nu}(\psi|_{\Omega_t \cap \Omega_{\mathrm{l}}})\phi = 0.
\]
The image of the trace map restricted to $\{\phi\in W^{1,2}(\Omega_t \cap \Omega_{\mathrm l}) : \phi|_{\tilde\eta} = \phi|_{\gamma_{+,t}} = 0 \}$ is precisely $\widetilde{H}^{\frac12}\big((\partial(\Omega_t\cap\Omega_{\mathrm l}) \cap \gamma_{0,t})^\mathrm{o}\big)$, by \eqref{sobolev2} and \cite[Thm.~3.37]{McL}, so the result follows.
\end{proof}
Now, equipped with our preliminary analysis of normal derivatives, we prove Proposition~\ref{prop:domain_of_Laplacian}.
\begin{proof}[Proof of Proposition \ref{prop:domain_of_Laplacian}]
We only prove \eqref{domain-1b}; the proof of \eqref{domain-1} for regular domains is similar but less involved, so we omit it. Let $\psi \in W^{1,2}(\Omega)$. From \eqref{delta:domain} we have that $\psi \in \mc{D}(\DN)$ if and only if $\Delta \psi \in L^2(\Omega)$ and
\be
\label{Ndomain1}
	\int\limits_\Omega (\Delta \psi)\phi = \int\limits_{\Omega}\langle\grad \psi,\grad\phi\rangle
\ee
for all $\phi \in W^{1,2}(\Omega)$. Thus, we fix $\psi,\phi \in W^{1,2}(\Omega)$ with $\Delta \psi \in L^2(\Omega)$. Since $\langle\grad \psi,\grad\phi\rangle$ and $(\Delta\psi)\phi$ are in $L^1(\Omega)$, their integrals over $\Omega_t$ converge to their integrals over $\Omega$ as $t \to 1$ by the dominated convergence theorem, hence \eqref{Ndomain1} is equivalent to
\be
\label{Ndomain2}
	\lim_{t \to 1} \int\limits_{\Omega_t} (\Delta \psi) \phi = \lim_{t \to 1} \int\limits_{\Omega_t}\langle\grad \psi,\grad\phi\rangle.
\ee
We now use the dissection \eqref{dissect}, applying Green's formula on the truncated and dissected domains to obtain
\begin{align*}
	\int\limits_{\Omega_t} \langle\grad \psi,\grad\phi\rangle
	&= \int\limits_{\Omega_t} (\Delta \psi) \phi + 
	\int\limits_{\partial(\Omega_t\cap\Omega_{\mathrm l})} \partial_{\nu}(\psi|_{\Omega_t \cap \Omega_{\mathrm l}}) \phi +  
	\int\limits_{\partial(\Omega_t\cap\Omega_{\mathrm r})} \partial_{\nu}(\psi|_{\Omega_t \cap \Omega_{\mathrm r}}) \phi.
\end{align*}
Comparing with \eqref{Ndomain2}, we see that $\psi \in \mc{D}(\DN)$ if and only if
\be
\label{Ndomain3}
	\lim_{t \to 1} \left\{  \int\limits_{\partial(\Omega_t\cap\Omega_{\mathrm l})} \partial_{\nu}(\psi|_{\Omega_t \cap \Omega_{\mathrm l}}) \phi +  
	\int\limits_{\partial(\Omega_t\cap\Omega_{\mathrm r})} \partial_{\nu}(\psi|_{\Omega_t \cap \Omega_{\mathrm r}}) \phi \right\} = 0
\ee
for each $\phi \in W^{1,2}(\Omega)$. Therefore, it suffices to show that \eqref{Ndomain3} is equivalent to the conditions in \eqref{domain-1b}.

We claim that if $\psi\in W^{1,2}(\Omega)$ satisfies $\Delta \psi \in L^2(\Omega)$ and $\partial_{\nu}(\psi|_{\Omega_t \cap \Omega_{\mathrm{l}}})\big|_{{\gamma_{0,t}}^\mathrm{o}} = \partial_{\nu}(\psi|_{\Omega_t \cap \Omega_{\mathrm{r}}})\big|_{{\gamma_{0,t}}^\mathrm{o}} = 0$, then
\be
\label{Ndomain4}
\int\limits_{\partial(\Omega_t\cap\Omega_{\mathrm l})} \partial_{\nu}(\psi|_{\Omega_t \cap \Omega_{\mathrm l}}) \phi +  
	\int\limits_{\partial(\Omega_t\cap\Omega_{\mathrm r})} \partial_{\nu}(\psi|_{\Omega_t \cap \Omega_{\mathrm r}}) \phi
	= \int_{\gamma_{\pm,t}} \partial_{\nu}(\psi|_{\Omega_t \cap \Omega_{\rm r}}) \phi
\ee
for any $0 < t < 1$ and $\phi \in W^{1,2}(\Omega)$. To prove this, we decompose the integrals over 
$\partial(\Omega_t\cap\Omega_{\mathrm l})$ and $\partial(\Omega_t\cap\Omega_{\mathrm r})$ into a sum of integrals over the different parts of the boundary. This is nontrivial, since this integral notation actually represents the action of the normal derivative distribution on a test function in $H^{\frac12}$, and a distribution in $H^{-\frac12}$ does not necessarily split into the sum of its restriction to different parts of the boundary, as discussed in Section \ref{sec:Sobolevdef}.

Here we make use of Lemma \ref{lem:split}, as well as the assumption that $\partial_{\nu}(\psi|_{\Omega_t \cap \Omega_{\mathrm{l}}})\big|_{{\gamma_{0,t}}^\mathrm{o}} = \partial_{\nu}(\psi|_{\Omega_t \cap \Omega_{\mathrm{r}}})\big|_{{\gamma_{0,t}}^\mathrm{o}} = 0$. Applying the lemma to $N = \Omega_t \cap \Omega_{\rm l}$, with the boundary decomposed into $\Gamma_1 = \tilde\eta$, $\Gamma_2 = \gamma_{+,t}$ and $\Gamma_0 = \partial(\Omega_t \cap \Omega_{\rm l}) \cap \gamma_{0,t}$ we obtain
\[
	\int\limits_{\partial(\Omega_t\cap\Omega_{\mathrm l})} \partial_{\nu}(\psi|_{\Omega_t \cap \Omega_{\mathrm l}}) \phi = \int\limits_{\tilde\eta} \partial_{\nu}(\psi|_{\Omega_t \cap \Omega_{\mathrm l}}) \phi + \int\limits_{\gamma_{+,t}} \partial_{\nu}(\psi|_{\Omega_t \cap \Omega_{\mathrm l}}) \phi.
\]
Similarly, for $\Omega_t \cap \Omega_{\rm r}$ we get
\[
	\int\limits_{\partial(\Omega_t\cap\Omega_{\mathrm r})} \partial_{\nu}(\psi|_{\Omega_t \cap \Omega_{\mathrm r}}) \phi = \int\limits_{\tilde\eta} \partial_{\nu}(\psi|_{\Omega_t \cap \Omega_{\mathrm r}}) \phi + \int\limits_{\gamma_{-,t}} \partial_{\nu}(\psi|_{\Omega_t \cap \Omega_{\mathrm r}}) \phi.
\]

Adding these together and using Lemma \ref{auxlem1} to cancel the $\tilde\eta$ terms completes the proof of \eqref{Ndomain4}.

To finish the proof of the proposition, suppose that $\psi \in \mc{D}(\DN)$, so \eqref{Ndomain3} holds. Lemma~\ref{auxlem2} implies that $\partial_{\nu}(\psi|_{\Omega_t \cap \Omega_{\mathrm{l}}})\big|_{{\gamma_{0,t}}^\mathrm{o}} = \partial_{\nu}(\psi|_{\Omega_t \cap \Omega_{\mathrm{r}}})\big|_{{\gamma_{0,t}}^\mathrm{o}} = 0$, so we can use \eqref{Ndomain4} to conclude that
\[
	\lim_{t \to 1} \int_{\gamma_{-,t}} \partial_{\nu}(\psi|_{\Omega_t \cap \Omega_{\rm r}}) \phi = 0
\]
for any $\phi\in W^{1,2}(\Omega)$. Therefore, the boundary conditions given in \eqref{domain-1b} are satisfied. Conversely, if $\psi$ satisfies the boundary conditions in \eqref{domain-1b}, we take the limit of \eqref{Ndomain4} to find that \eqref{Ndomain3} holds and so $\psi \in \mc{D}(\DN)$.
\end{proof}

\begin{rem}
\label{rem:approx}
For the $\gamma_{\pm,t}$ boundary condition in \eqref{domain-1} or \eqref{domain-1b}, it is enough to check that
\begin{equation}
\label{intlimit}
	\lim_{t \to 1} \int_{\gamma_{\pm,t}} \partial_{\nu}(\psi|_{\Omega_t}) \phi = 0 
\end{equation}
for test functions $\phi \in W^{1,2}(\Omega)$ that are $C^1$ in a neighbourhood of the cusp. If $\psi$ satisfies this, and the other conditions in \eqref{domain-1} or \eqref{domain-1b}, the proof of Proposition \ref{prop:domain_of_Laplacian} shows that \eqref{Ndomain1} holds for all such $\phi$. It then follows from Proposition \ref{extens}(\ref{dense}) that \eqref{Ndomain1}  in fact holds for all $\phi \in W^{1,2}(\Omega)$, and so $\psi \in \mc D(\DN)$.
\end{rem}

\subsection{Proof of Theorem \ref{thm:restriction}}
If $\Omega$ has no cusps or cracks, then Proposition \ref{prop:domain_of_Laplacian} says that $\mc{D}(\DN)$ simply consists of functions that are sufficiently regular and satisfy Neumann boundary conditions everywhere on $\partial\Omega$. On the other hand, when a cusp is present we must also impose the condition \eqref{intlimit}, which says the normal derivative of $\psi$ on $\gamma_{\pm,t}$ does not blow up as the cusp is approached. We now give a simple condition that guarantees this is the case. 

For simplicity we only state the result for a cusp at $\bs q$; the corresponding statement for a cusp at $\bs p$ is analogous. We define the ``doubly-truncated domain"
\be
	\Omega'_t  =  \{x \in \Omega : t_0 f(\bs q) < f(x) < t f(\bs q)  \}
\ee
for a fixed $0 < t_0 < 1$.

\begin{prop}
\label{prop:intlimit}
If $\psi \in W^{1,2}(\Omega)$, and there exists $t_0$ such that $\psi \in W^{2,2}(\Omega_t')$ for all $t_0 < t < 1$ and $(1-t)^{1/2} \| \psi \|^2_{W^{2,2}(\Omega_t')}$ is bounded near $t=1$, then \eqref{intlimit} holds.
\end{prop}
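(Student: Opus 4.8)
The plan is to reduce the claim to an elementary Cauchy--Schwarz estimate on the level curve $\gamma_{+,t}$, and then to control the two resulting factors---the length of $\gamma_{+,t}$ and the $L^2$-norm of $\grad\psi$ on it---using, respectively, the cusp geometry furnished by Hartman's theorem and the weighted $W^{2,2}$-bound in the hypothesis. By Remark~\ref{rem:approx} it suffices to verify \eqref{intlimit} for test functions $\phi$ that are $C^1$ near the cusp $\bs q$; in particular $\phi$ is bounded there, say $|\phi|\le M$. Since $\psi\in W^{2,2}(\Omega'_t)$ and $\Omega'_t$ is a one-sided neighbourhood of $\gamma_{+,t}$ inside $\Omega_t$, the trace of $\grad\psi$ on $\gamma_{+,t}$ lies in $H^{1/2}(\gamma_{+,t})\subset L^2(\gamma_{+,t})$, so $\partial_\nu(\psi|_{\Omega_t})\big|_{\gamma_{+,t}}$ is a genuine $L^2$-function and the pairing in \eqref{intlimit} is an ordinary integral. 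Cauchy--Schwarz then gives
\[
	\Big| \int_{\gamma_{+,t}} \partial_{\nu}(\psi|_{\Omega_t})\, \phi \Big| \le M\, \|\grad\psi\|_{L^2(\gamma_{+,t})}\, |\gamma_{+,t}|^{1/2},
\]
so it is enough to prove that $\|\grad\psi\|^2_{L^2(\gamma_{+,t})}\,|\gamma_{+,t}| \to 0$ as $t\to 1$.

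For the length factor I would argue exactly as in the proof of Proposition~\ref{extens}: applying Proposition~\ref{prop: Hartman} at the extremum $\bs q$, the two boundary curves meeting at the cusp are described in suitable coordinates by $y=c_i x^{\alpha}$ with $\alpha>1$, while $f(\bs q)-f\asymp x^2$ along the cusp. Hence the level curve $\gamma_{+,t}=\{f=tf(\bs q)\}$ sits at $x\asymp(1-t)^{1/2}$ and spans the gap $c_1 x^\alpha<y<c_2 x^\alpha$, giving $|\gamma_{+,t}|\lesssim (1-t)^{\alpha/2}$. For the trace factor I would foliate the cusp region by the level curves $\gamma_{+,s}$ (equivalently, flow by $\grad f$) and integrate $|\grad\psi|^2$ from the fixed curve $\gamma_{+,t_0}$ up to $\gamma_{+,t}$ by the fundamental theorem of calculus. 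The derivative of $|\grad\psi|^2$ along the flow involves $\nabla^2\psi$ and $\grad\psi$ together with a factor $|\grad f|^{-2}$, but after converting the resulting area integral back via the coarea Jacobian one power of $|\grad f|$ cancels, and a second is absorbed by the inner product $\langle \nabla^2\psi\,\grad\psi,\grad f\rangle$; since the flow contracts level curves as one moves toward the maximum, the Jacobian ratio is bounded by $1$. This yields
\[
	\|\grad\psi\|^2_{L^2(\gamma_{+,t})} \lesssim \|\grad\psi\|^2_{L^2(\gamma_{+,t_0})} + \|\nabla^2\psi\|_{L^2(\Omega'_t)}\|\grad\psi\|_{L^2(\Omega'_t)} \lesssim \|\psi\|^2_{W^{2,2}(\Omega'_t)} \lesssim (1-t)^{-1/2},
\]
where the last inequality is the hypothesis.

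Combining the two estimates gives $\|\grad\psi\|^2_{L^2(\gamma_{+,t})}\,|\gamma_{+,t}| \lesssim (1-t)^{(\alpha-1)/2}\to 0$, since $\alpha>1$, which proves \eqref{intlimit}. I expect the trace inequality on the degenerating slice $\Omega'_t$ to be the main obstacle: as $t\to1$ the domain $\Omega'_t$ itself develops the cusp, so the standard trace constant blows up, and the estimate must instead be obtained directly from the level-set foliation. The delicate point is the $|\grad f|^{-1}$ singularity coming from the coarea change of variables near the critical point $\bs q$; it is exactly this singularity that the weight $(1-t)^{1/2}$ in the hypothesis is designed to compensate, and the argument closes only because the cusp exponent satisfies $\alpha>1$, so that the length of $\gamma_{+,t}$ decays faster than $(1-t)^{1/2}$.
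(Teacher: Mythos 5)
Your strategy is structurally the same as the paper's: reduce to test functions that are $C^1$ near the cusp via Remark \ref{rem:approx}, apply Cauchy--Schwarz on $\gamma_{+,t}$, control $\|\grad\psi\|^2_{L^2(\gamma_{+,t})}$ by a trace inequality on the degenerating slab $\Omega_t'$, and beat the resulting $(1-t)^{-1/2}$ blow-up with a length estimate for $\gamma_{+,t}$. Your ``level-set foliation plus fundamental theorem of calculus'' step is precisely the paper's Lemma \ref{lem:tracet}: it is the divergence theorem applied to the field $|\grad\psi|^2\,\grad f/|\grad f|$ on $\Omega_t'$, with no flux through the lateral boundary because $\grad f$ is tangent to the Neumann lines (a fact you use implicitly and should state). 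Your observation that the term involving $\dv(\grad f/|\grad f|)$ has a favorable sign (level curves contract toward the maximum) is a genuine refinement --- the paper instead bounds $|\dv(\grad f/|\grad f|)| \lesssim (1-t)^{-1/2}$ and keeps the term --- but in a general metric the contraction holds only up to $O(1)$ corrections (the mean curvature is $-1/r + O(1)$), so ``Jacobian ratio bounded by $1$'' should read ``bounded''; this is harmless, since the correction contributes at most $C\|\grad\psi\|^2_{L^2(\Omega)}$.

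The genuine gap is your length bound $|\gamma_{+,t}|\lesssim(1-t)^{\alpha/2}$. You take the polynomial profile $y=c_i x^{\alpha}$ from Hartman's theorem (Proposition \ref{prop: Hartman}) and the relation $f(\bs q)-f\asymp x^2$ from the Morse lemma, but these hold in \emph{different} coordinate systems and cannot be combined this way. Hartman's map $\Phi$ is only a $C^1$ diffeomorphism with $D\Phi(\bs q)=\Id$: it moves points by $o(|z|)$, which for $\alpha>1$ swamps the cusp width $|z|^{\alpha}$, so the polynomial profile is destroyed when you pass to coordinates in which the level sets of $f$ are round; conversely, in Hartman coordinates $f\circ\Phi^{-1}$ is only $C^1$, its level curves need not be graphs or arcs, and containment of $\Phi(\gamma_{+,t})$ in a region of small diameter does not bound its arc length, since a $C^1$ image can a priori oscillate. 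Fortunately, your final computation only needs $|\gamma_{+,t}|=o\big((1-t)^{1/2}\big)$, and this does follow rigorously: in Morse-lemma coordinates the level set is an exact circular arc of radius $\asymp(1-t)^{1/2}$, and the tangential meeting of the two boundary Neumann lines at the cusp forces its angular extent to vanish as $t\to1$. This is exactly the paper's Lemma \ref{lem:length}. With that substitution --- so the conclusion reads $(1-t)^{-1/2}\cdot o\big((1-t)^{1/2}\big)\to 0$ rather than $(1-t)^{(\alpha-1)/2}\to 0$ --- your proof closes and coincides with the paper's.
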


The proposition does not assume $\psi$ is in $W^{2,2}(\Omega)$, but only that its $W^{2,2}(\Omega_t')$ norm does not blow up too quickly near the cusp. Of course this condition is automatically satisfied if $\psi \in W^{2,2}(\Omega)$.

\begin{cor}
\label{cor:intlimit}
If $\psi \in W^{2,2}(\Omega)$, then \eqref{intlimit} holds.
\end{cor}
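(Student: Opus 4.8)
The plan is to obtain the corollary as an immediate specialization of Proposition \ref{prop:intlimit}: I would simply verify that both hypotheses of that proposition are automatically satisfied once we assume $\psi \in W^{2,2}(\Omega)$, and then invoke it. First I would fix any $t_0 \in (0,1)$ and observe that, since the doubly-truncated domain satisfies $\Omega'_t \subset \Omega$ for every $t \in (t_0,1)$, the restriction $\psi|_{\Omega'_t}$ inherits $W^{2,2}$ regularity from $\psi$, with the monotonicity bound
\[
	\|\psi\|_{W^{2,2}(\Omega'_t)} \leq \|\psi\|_{W^{2,2}(\Omega)}
\]
coming directly from the fact that the integrals defining the norm are taken over a smaller region. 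This establishes the first hypothesis, namely that $\psi \in W^{2,2}(\Omega'_t)$ for all $t_0 < t < 1$.

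For the quantitative hypothesis, I would then estimate
\[
	(1-t)^{1/2}\,\|\psi\|^2_{W^{2,2}(\Omega'_t)} \leq (1-t)^{1/2}\,\|\psi\|^2_{W^{2,2}(\Omega)}.
\]
Since $\|\psi\|_{W^{2,2}(\Omega)}$ is a fixed finite constant independent of $t$, the right-hand side is bounded near $t=1$ (indeed it tends to $0$), so the boundedness condition required by Proposition \ref{prop:intlimit} holds. With both hypotheses verified, Proposition \ref{prop:intlimit} yields \eqref{intlimit} directly, completing the proof.

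There is essentially no obstacle here: all of the analytic work—expressing the boundary integral over $\gamma_{\pm,t}$ in terms of the second-order norm on the doubly-truncated collar and controlling its blow-up rate against the shrinking length of the level line—has already been carried out inside Proposition \ref{prop:intlimit}. The only thing to check in the corollary is the elementary monotonicity of the Sobolev norm under restriction to the nested domains $\Omega'_t \subset \Omega$, which requires no new ideas. This is precisely the remark recorded after Proposition \ref{prop:intlimit}, that the growth condition is automatic when $\psi \in W^{2,2}(\Omega)$.
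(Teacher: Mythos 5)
Your proof is correct and matches the paper's approach exactly: the paper treats this corollary as an immediate consequence of Proposition \ref{prop:intlimit}, noting only that the growth condition is ``automatically satisfied if $\psi \in W^{2,2}(\Omega)$,'' which is precisely the monotonicity argument you spell out. Your write-up simply makes explicit the restriction bound $\|\psi\|_{W^{2,2}(\Omega'_t)} \leq \|\psi\|_{W^{2,2}(\Omega)}$ that the paper leaves implicit.
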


Since the Morse function $f$ that generated the Neumann domain $\Omega$ was assumed to be smooth, $f\big|_\Omega$ satisfies the hypotheses of Corollary \ref{cor:intlimit}, and Theorem \ref{thm:restriction} follows immediately.


The main ingredient in the proof is a trace estimate for the doubly-truncated domain $\Omega_t'$, with controlled dependence on $t$.

\begin{lem}
\label{lem:tracet}
There exist constants $A,B>0$ such that
\begin{equation}
\label{quant:trace}
	\int_{\gamma_{\pm,t}} u^2 \leq \frac{A}{\sqrt{1-t}} \int_{\Omega_t'} u^2 + B \int_{\Omega_t'} |\nabla u|^2 
\end{equation}
for all $u \in W^{1,2}(\Omega_t')$ and $t$ sufficiently close to $1$.
\end{lem}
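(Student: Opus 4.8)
The plan is to prove \eqref{quant:trace} by applying the divergence theorem to the vector field $u^2 X$, where $X := \grad f / |\grad f|$ is the unit normal field to the level lines of $f$. It suffices to treat a single cusp, located at $\bs q \in \Max(f)$, the case of a cusp at $\bs p$ being analogous. First I would fix $t_0$ close enough to $1$ that $\overline{\Omega_{t_0}'}$, and hence $\overline{\Omega_t'}$ for all $t_0 < t < 1$, lies in a fixed coordinate neighbourhood of $\bs q$ containing no other critical point. Since $\overline{\Omega_t'}$ excludes the critical value $f(\bs q)$, and hence the point $\bs q$ itself, the field $X$ is smooth on $\overline{\Omega_t'}$. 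Moreover $\Omega_t'$ is a Lipschitz domain: its boundary consists of the level lines $\gamma_{\pm,t}$ and $\gamma_{\pm,t_0}$ together with pieces of $\partial\Omega$, which are gradient flow lines, and by Proposition \ref{prop:angles-at-critical-pts}\eqref{enu:prop-angles-at-critical-pts-3} the level lines meet $\partial\Omega$ at right angles (after shrinking $t_0$ to avoid the finitely many exceptional saddle crossings, which are themselves Lipschitz). Hence Green's formula applies to $u \in W^{1,2}(\Omega_t')$ and the smooth field $X$.

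On $\partial\Omega_t'$ the field $X$ is tangent to the flow-line portions, so $X \cdot \nu = 0$ there, while $X \cdot \nu = +1$ on $\gamma_{\pm,t}$ and $X\cdot\nu = -1$ on $\gamma_{\pm,t_0}$, since $\nu$ points in the direction of increasing (resp. decreasing) $f$. The divergence theorem therefore gives
\[
	\int_{\gamma_{\pm,t}} u^2 = \int_{\gamma_{\pm,t_0}} u^2 + \int_{\Omega_t'} u^2 \dv X + \int_{\Omega_t'} 2 u \left<\grad u, X\right>.
\]
The last integral is bounded by $\int_{\Omega_t'}(u^2 + |\grad u|^2)$ using $|X| = 1$ and Young's inequality, contributing $O(1)$ coefficients to both terms. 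The boundary integral over the \emph{fixed} level line $\gamma_{\pm,t_0}$ is controlled by a standard trace inequality on a fixed Lipschitz neighbourhood of $\gamma_{\pm,t_0}$, with a constant independent of $t$, and is likewise bounded by $C(\int_{\Omega_t'} u^2 + \int_{\Omega_t'}|\grad u|^2)$. The crux is therefore the term $\int_{\Omega_t'} u^2 \dv X$.

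The key estimate is a bound on $\dv X$, which is the geodesic curvature of the level lines. Using the identity $\dv X = \tfrac{\dv \grad f}{|\grad f|} - \tfrac{\left<\grad f, \Hess f\,\grad f\right>}{|\grad f|^3}$ together with the fact that $\bs q$ is a non-degenerate critical point, so that $\grad f = \Hess f(\bs q)(\cdot - \bs q) + O(\operatorname{dist}^2)$ with $|\grad f| \geq c\,\operatorname{dist}(\cdot,\bs q)$, and that $\dv\grad f$ and $\Hess f$ are bounded, one finds
\[
	|\dv X(x)| \leq \frac{C}{\operatorname{dist}(x,\bs q)}
\]
for $x$ near $\bs q$. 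On the other hand, Taylor expansion of $f$ at $\bs q$ gives $|f(\bs q) - f(x)| \leq C\operatorname{dist}(x,\bs q)^2$; since every $x \in \Omega_t'$ is separated in $f$-value from the critical value by $|f(\bs q) - f(x)| \geq (1-t)|f(\bs q)|$, this yields the lower bound $\operatorname{dist}(x, \bs q) \geq c\sqrt{1-t}$ throughout $\Omega_t'$. Consequently $|\dv X| \leq C/\sqrt{1-t}$ on $\Omega_t'$, so that $\int_{\Omega_t'} u^2 |\dv X| \leq \tfrac{C}{\sqrt{1-t}}\int_{\Omega_t'} u^2$. Assembling the three bounds and absorbing the $O(1)$ coefficients of $\int_{\Omega_t'} u^2$ into the $\tfrac{A}{\sqrt{1-t}}$ term (valid since $\sqrt{1-t} \leq 1$ for $t$ near $1$) yields \eqref{quant:trace}.

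I expect the main obstacle to be the curvature estimate and its interaction with the distance bound. The whole point of using $X$ rather than $\grad f$ is that it places the singular factor $(1-t)^{-1/2}$ \emph{solely} on the $u^2$ term, through the blow-up of the level-line curvature as the cusp is approached, while keeping the gradient term uniformly bounded. Verifying that the curvature blows up no faster than $\operatorname{dist}^{-1}$, and that $\operatorname{dist} \gtrsim \sqrt{1-t}$ on $\Omega_t'$, is precisely what pins the rate down to $(1-t)^{-1/2}$; a cruder argument applying the divergence theorem with $\grad f$ (whose normal component vanishes like $\operatorname{dist}$ on $\gamma_{\pm,t}$) loses a full power and only gives $(1-t)^{-1}$.
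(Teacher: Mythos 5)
Your proof is correct and takes essentially the same approach as the paper: apply the divergence theorem to $u^2\,\grad f/|\grad f|$ on $\Omega_t'$, use that this field is tangent to the flow-line portions of the boundary and normal on the level lines, and control the resulting bulk term via the curvature bound $\big|\dv\big(\grad f/|\grad f|\big)\big| \leq C/\sqrt{1-t}$ on $\Omega_t'$. The only cosmetic differences are that the paper inserts a cutoff $\chi$ vanishing on $\gamma_{+,t_0}$ into the vector field to eliminate the inner boundary term (where you instead use a fixed trace inequality near $\gamma_{\pm,t_0}$), and it derives the curvature estimate in Morse-lemma coordinates rather than directly from the non-degeneracy of $\Hess f(\bs q)$ as you do.
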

\begin{proof}
Increasing $t_0$ if necessary, we can assume that $\bs q$ is the only critical point of $f$ in $\overline{\Omega_{t_0}^c}$.
Consider the vector field
\[
	X := \chi u^2 \frac{\nabla f}{|\nabla f|},
\]
where $\chi$ is a smooth cutoff function that vanishes on $\gamma_{+,t_0}$ and equals $1$ in a neighbourhood of $\bs q$. Since $f$ is smooth and has no critical points in $\overline{\Omega'_t}$, we have $X \in W^{1,1}(\Omega'_t)$. Observe that $\nabla f / |\nabla f|$ is tangent to $\gamma_{0,t}$, whereas on $\gamma_{+,t}$ it coincides with the outward unit normal. This implies
\[
	\int_{\partial \Omega'_t} X \cdot \nu = \int_{\gamma_{+,t}} u^2
\]
for any $t$ large enough that $\chi|_{\gamma_{+,t}} \equiv 1$. On the other hand, the divergence theorem implies
\[
	\int_{\partial \Omega'_t} X \cdot \nu = \int_{\Omega'_t} \dv X = \int_{\Omega'_t} \left( \nabla(\chi  u^2) \cdot \frac{\nabla f}{|\nabla f|} + \chi u^2 \dv \frac{\nabla f}{|\nabla f|} \right),
\]
so we obtain
\begin{equation}
\label{b:est2}
	\int_{\gamma_{+,t}} u^2 \leq  B \| u\|^2_{H^1(\Omega'_t)} + \int_{\Omega'_t} u^2 \left|\dv \frac{\nabla f}{|\nabla f|} \right|
\end{equation}
for some constant $B$ depending only on $\chi$.

To estimate the integral on the right-hand side, we observe that the level sets of $f$ have mean curvature $\dv (\nabla f / |\nabla f|)$. Using the Morse lemma, we can find coordinates $(x,y)$ in a neighbourhood of $\bs q$ such that $f(x,y) = f(\bs q) - x^2 - y^2$. A straightforward computation (c.f. \cite[Lem. 4.7]{BCJLS21}) gives
\[
	\left|\dv \frac{\nabla f}{|\nabla f|} (x,y) \right| \leq \frac{C}{\sqrt{x^2 + y^2}} = \frac{C}{\sqrt{f(\bs q) - f(x,y)}}
\]
and so we have the uniform estimate
\[
	\left|\dv \frac{\nabla f}{|\nabla f|} \right| \leq \frac{C}{\sqrt{f(\bs q)(t-1)}}
\]
on $\overline{\Omega_t'}$. Substituting this into \eqref{b:est2} completes the proof.
\end{proof}
The other ingredient in the proof of Proposition \ref{prop:intlimit} is the following geometric estimate.
\begin{lem}
\label{lem:length}
The length of $\gamma_{\pm,t}$ is $o\big((1-t)^{1/2}\big)$ near $t=1$.
\end{lem}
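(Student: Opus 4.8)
The plan is to reduce everything to a local analysis at the cusp. The level lines $\gamma_{\pm,t}$ arise only from the truncation \eqref{omegat}, which is performed precisely at the cusps of $\partial\Omega$; it therefore suffices to treat $\gamma_{+,t}$ near a cusp at the maximum $\bs q$, the argument for $\gamma_{-,t}$ near a cusp at $\bs p$ being identical. I would fix geodesic normal coordinates centered at $\bs q$, so that $g_{ij}(\bs 0)=\delta_{ij}$ and the metric length of any short curve near $\bs q$ agrees with its Euclidean length up to a factor tending to $1$; it then suffices to estimate the Euclidean length of $\gamma_{+,t}$ in these coordinates.

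The estimate rests on two independent facts. First, since $f$ is Morse the maximum $\bs q$ is non-degenerate, so $f(\bs q)-f(x)\asymp|x|^2$ near $\bs q$; hence the level set $\{f=tf(\bs q)\}$, on which $f(\bs q)-f=(1-t)f(\bs q)\asymp(1-t)$, is for $t$ close to $1$ a smooth closed curve encircling $\bs q$ and contained in an annulus of radius $r\asymp\sqrt{1-t}$, whose total length is therefore also $\asymp\sqrt{1-t}$. Second, the two Neumann lines bounding $\Omega$ at the cusp are $C^1$ up to $\bs q$ by Lemma~\ref{c1neumannline}, and they meet at angle $0$ by Proposition~\ref{prop:angles-at-critical-pts}\eqref{enu:prop-angles-at-critical-pts-2}; consequently they share a common unit tangent $\tau$ at $\bs q$. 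Writing each boundary curve as $\ell\mapsto \ell\tau+o(\ell)$ near $\bs q$, the polar angle of a boundary point at distance $r$ tends to $\angle\tau$ as $r\to0$, so the angular sector $\beta(r)$ that $\Omega$ occupies at radius $r$ satisfies $\beta(r)\to0$.

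Combining the two facts finishes the proof. The curve $\gamma_{+,t}$ is the single sub-arc of $\{f=tf(\bs q)\}$ lying in $\overline\Omega$, by Theorem~\ref{thm:topological-properties-manifolds}\eqref{enu:thm-topological-properties-manifolds-5}; its endpoints lie on the two bounding Neumann lines at distance $r\asymp\sqrt{1-t}$ from $\bs q$, hence at polar angles differing by at most $\beta(r)$, and the whole arc stays inside the angular sector of width $O(\beta(r))$ occupied by $\Omega$. For $t$ near $1$ the level curve is a small perturbation of an ellipse and may be parametrized by polar angle with $|d(\text{curve})/d\theta|\lesssim r$, so
\[
	\operatorname{length}(\gamma_{+,t})\ \lesssim\ r\,\beta(r)\ \asymp\ \sqrt{1-t}\,\beta\big(\sqrt{1-t}\big)\ =\ o\big((1-t)^{1/2}\big),
\]
since $\beta(r)\to0$. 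Transferring back to the metric $g$ only changes the constant, giving the claim.

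The main obstacle is that the level sets of $f$ and the metric $g$ are not simultaneously in standard form: the Morse normal form that makes the level sets round ignores $g$, while geodesic normal coordinates control $g$ but leave the level sets only approximately elliptical. Working in geodesic normal coordinates and using non-degeneracy of $\Hess f(\bs q)$ to pin the level-set radius at $\asymp\sqrt{1-t}$, while extracting the vanishing angular fraction $\beta(r)\to0$ from the $C^1$ common-tangent property of the cusp, is the delicate point; one must also check that the level curve is star-shaped about $\bs q$ for $t$ near $1$, to justify the polar parametrization. I note that only the qualitative fact $\beta(r)\to0$ is needed here, but the explicit cusp description $\gamma(x)=(x,cx^\alpha)$ with $\alpha>1$ obtained in the proof of Proposition~\ref{extens} in fact yields the quantitative rate $\operatorname{length}(\gamma_{\pm,t})=O\big((1-t)^{\alpha/2}\big)$.
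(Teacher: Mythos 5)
Your proof is correct and takes essentially the same route as the paper --- both bound the length as (radius of the level curve) $\times$ (its angular width), with radius $\asymp (1-t)^{1/2}$ coming from non-degeneracy of $\Hess f(\bs q)$ and angular width tending to $0$ because the two Neumann lines meet tangentially at the cusp --- the only difference being that the paper works in Morse-lemma coordinates, where $\gamma_{\pm,t}$ is an exact circular arc and the metric enters only through a bounded factor $C$, which sidesteps your star-shapedness/polar-parametrization verification. One caveat on your closing side remark: the description $\gamma(x)=(x,cx^{\alpha})$ holds in the Hartman coordinates of Proposition \ref{prop: Hartman}, which are only $C^1$, and a $C^1$ coordinate change preserves the vanishing of the angular width but not its polynomial rate, so the claimed quantitative bound $O\big((1-t)^{\alpha/2}\big)$ does not follow as stated (though, as you note, it is not needed for the lemma).
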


\begin{proof}
We prove the result for $\gamma_{+,t}$, assuming there is a cusp at the maximum $\bs{q}$; the proof for $\gamma_{-,t}$ is identical. Using the Morse lemma, we can find coordinates $(x,y)$ near $\bs q$ such that $f(x,y) = f(\bs q) - x^2 - y^2$, and so $\gamma_{+,t}$ is contained in the circle of radius $\rho = \sqrt{(1-t)f(\bs q)}$. More precisely, it is the arc bounded by the angles $\theta_1(t)$ and $\theta_2(t)$. Parameterizing this as $\gamma(\theta) = (\rho \cos\theta, \rho \sin\theta)$, we have $|\gamma'(\theta)|_g \leq C \sqrt{1-t}$, where $|\cdot|_g$ denotes the length computed using the metric $g$, and $C$ is some constant depending on $f(\bs q)$ and the components of $g$ in this coordinate chart. This implies
\[
	L(\gamma_{+,t}) = \int_{\theta_1(t)}^{\theta_2(t)} |\gamma'(\theta)|_g \,d\theta \leq C \sqrt{1-t} \big|\theta_2(t) - \theta_1(t)\big|.
\]
Near $\bs q$, the boundary $\pO$ consists of two Neumann lines meeting tangentially at $\bs q$ (since there is a cusp). This implies $|\theta_2(t) - \theta_1(t)| \to 0$ as $t \to 1$ and completes the proof.
\end{proof}

We are now ready to prove Proposition \ref{prop:intlimit}.

\begin{proof}
Since $|\partial_\nu \psi| \leq |\nabla \psi|$, it is enough to show that
\begin{equation}
\label{intlimit2}
	\lim_{t \to 1} \int_{\gamma_{\pm,t}} |\nabla \psi| \phi = 0
\end{equation}
for all $\phi \in W^{1,2}(\Omega)$ that are $C^1$ in a neighbourhood of $\bs q$; see Remark \ref{rem:approx}. Fix such a $\phi$ and define $u = \chi |\nabla \psi| \phi$, where $\chi$ is a smooth cutoff function that equals $1$ near $\bs q$ and is supported in the region where $\phi$ is smooth. The hypotheses on $\psi$ imply $u \in L^2(\Omega)$ and $u \in W^{1,2}(\Omega_t')$ for all $t_0 < t<1$, with
\[
	\|u\|_{L^2(\Omega)} \leq C \| \psi \|_{W^{1,2}(\Omega)}, \qquad \|u\|_{W^{1,2}(\Omega_t')} \leq C \| \psi \|_{W^{2,2}(\Omega_t')},
\]
for some constant $C$ depending only on $\phi$ and $\chi$.

Using H\"older's inequality and Lemma~\ref{lem:tracet}, we obtain
\begin{align*}
	\left( \int_{\gamma_{\pm,t}} u \right)^2 &\leq \left(\int_{\gamma_{\pm,t}} u^2 \right) L(\gamma_{\pm,t}) \\
	& \leq \left(\frac{A}{\sqrt{1-t}} \|u\|_{L^2(\Omega_t)}^2 + B \|u\|_{W^{1,2}(\Omega_t')}^2 \right) L(\gamma_{+,t}) \\
	& \leq \left(A \|u\|_{L^2(\Omega)}^2 + B \sqrt{1-t} \|u\|_{W^{1,2}(\Omega_t')}^2 \right) \frac{L(\gamma_{+,t})}{\sqrt{1-t}}.
\end{align*}
By Lemma \ref{lem:length} this tends to zero as $t \to 1$.
\end{proof}

\subsection*{Acknowledgments}{}
G.C. acknowledges the support of NSERC grant RGPIN-2017-04259. R.B. and S.K.E. were supported by ISF (grant No. 844/19).

\appendix

\section{Morse--Smale functions with cracked Neumann domains}
\label{app:typeii}

In this appendix we construct Morse--Smale functions having cracked Neumann domains.
As in the rest of the paper, we assume $M$ is a smooth, closed, connected orientable surface.

\begin{thm}
\label{thm:typeii}
Let $f$ be a Morse--Smale function on $M$ and $\Omega$ a Neumann domain of $f$. Then there exists a Morse--Smale function $\tilde f$ that has a cracked Neumann domain $\widetilde\Omega \subset \Omega$.
\end{thm}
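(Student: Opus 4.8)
The plan is to construct $\tilde f$ by a local modification of $f$ inside $\Omega$, introducing a new saddle--extremum pair so that one of the resulting Neumann domains is cracked. The guiding picture is the center example in Figure~\ref{fig:Neumann-domains-schematic}: a cracked domain arises when an extremal point has degree one, i.e. a single Neumann line emanates from it into the interior of $\overline{\widetilde\Omega}$. So the goal is to produce, inside $\Omega$, an extremum $\tilde{\bs q}$ of degree one.

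First I would fix a local coordinate disk $B \subset \Omega$ on which $f$ has no critical points (possible by Theorem~\ref{thm:topological-properties-manifolds}\eqref{enu:thm-topological-properties-manifolds-3}, since the only extrema in $\overline\Omega$ are $\bs p,\bs q \in \partial\Omega$, and we may shrink $B$ to avoid any saddles on $\partial\Omega$). On $B$ the gradient flow of $f$ is a smooth non-vanishing field, so after a coordinate change we may assume $f(x,y) = y$ on $B$, with flow lines the vertical segments. I would then define $\tilde f = f + \beta \cdot h$, where $\beta$ is a smooth bump supported in a smaller disk $B' \Subset B$ and equal to $1$ near the center, and $h$ is a carefully chosen local perturbation. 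The standard way to birth a canceling saddle--extremum pair is to perturb a monotone slice $y \mapsto y$ into a function with a local max and an adjacent saddle (a ``wiggle''): choose $h$ so that $\tilde f$ restricted to the relevant slice develops exactly one local maximum and one saddle, whose unstable/stable manifolds create a new Neumann line. The perturbation must be small in $C^2$ so that (i) no critical points are created or destroyed outside $B'$, and (ii) the two new critical points are non-degenerate, keeping $\tilde f$ Morse.

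Next I would verify the three required properties. \textbf{Morse--Smale:} by Lemma~\ref{prop:Morse-Smale-on-2d}, it suffices to check that no Neumann line of $\tilde f$ connects two saddles. The only new saddle is the one created inside $B'$; I would arrange the geometry of the wiggle so that both separatrices of this new saddle flow to extrema (the new local maximum on one side, and down into the old flow toward $\bs p$ on the other), and so that the new maximum has degree one — its sole Neumann line being the separatrix running to the new saddle. Generic choice of $h$, or an explicit normal form such as $h(x,y) = -\psi(x)(y-y_0)$ composed with a cubic-type profile in $y$, makes this transverse and hence Morse--Smale; a small further perturbation removes any accidental saddle--saddle connection. \textbf{Cracked domain:} the new maximum $\tilde{\bs q}$ has degree one by construction, so by the dichotomy preceding Remark~\ref{rem:notC}, the Neumann domain $\widetilde\Omega$ having $\tilde{\bs q}$ on its boundary is cracked. \textbf{Containment:} since $\widetilde\Omega$ is built entirely from flow lines inside $B \subset \Omega$ and $\tilde f = f$ outside $B'$, one checks $\widetilde\Omega \subset \Omega$ by confining all the relevant stable/unstable manifolds to $B$.

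The main obstacle I expect is \emph{global control of the gradient flow after the perturbation}: a local $C^2$-small change to $f$ can, in principle, alter the connectivity of distant separatrices and thereby spoil either the Morse--Smale property or the containment $\widetilde\Omega\subset\Omega$. The clean way around this is to keep the perturbation supported in $B'$ and small enough that, by structural stability of Morse--Smale gradient flows (away from the perturbation region all stable/unstable manifolds vary continuously and transversality is an open condition), the global combinatorics of $f$ outside $B$ is unchanged, while inside we have exact control via the chosen normal form. Concretely, I would first establish the containment by showing the new extremum's single Neumann line, together with the local separatrices of the new saddle, stays inside $B$, and only then invoke genericity/transversality to guarantee the Morse--Smale condition, breaking any residual saddle--saddle connection by an arbitrarily small additional perturbation supported in $B'$.
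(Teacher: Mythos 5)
Your local construction (normal form for $f$ on a disk with no critical points, plus a bump supported in $B' \Subset B$ that births a nondegenerate maximum--saddle pair) is essentially the paper's Lemma \ref{lem:fperturb}, but the mechanism you propose for controlling the global outcome is internally inconsistent. A perturbation supported in $B'$ that creates critical points cannot be $C^1$-small, let alone $C^2$-small: at each new critical point one has $\nabla(\tilde f - f) = -\nabla f$, so the perturbation's gradient norm is at least $\min_{\overline{B'}}|\nabla f| > 0$. Worse, structural stability of Morse--Smale gradient flows asserts precisely that sufficiently small perturbations yield topologically equivalent flows, hence \emph{no} new critical points; you cannot invoke it and simultaneously create a saddle--maximum pair. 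What actually does the work in the paper is only the support property: since $\tilde f = f$ on a neighbourhood of $\partial\Omega$ and outside $\Omega$, the invariant manifolds $W^s(\bs r_i)$, $W^u(\bs r_i)$ of the original boundary saddles are \emph{literally unchanged}, so no separatrix of the new saddle $\bs r_*$ can terminate at an old saddle, and the Morse--Smale property follows at once from Lemma \ref{prop:Morse-Smale-on-2d} --- no genericity, and no ``additional small perturbation to break saddle--saddle connections,'' is needed (such a fix would anyway face the same objection, since a connection may run outside $B'$).

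The more serious gap is the degree-one claim, which is the crux of the theorem and cannot be ``arranged by the geometry of the wiggle'': one ascending separatrix of $\bs r_*$ ends at the new maximum $\bs q_*$ by the local picture, but whether the \emph{other} ascending separatrix ends at the old maximum $\bs q$ or re-enters the perturbed region and also ends at $\bs q_*$ (giving $\deg(\bs q_*)=2$, i.e.\ no crack) is a global question about the flow far from $B'$, where you have no control. The paper settles it topologically: both descending separatrices of $\bs r_*$ must end at $\bs p$, the only minimum in $\overline\Omega$, so their union is a closed loop; if both ascending separatrices ended at the same maximum they would form a second closed loop meeting the first transversally at the single point $\bs r_*$, which is impossible in the simply connected $\Omega$ because the loops would have to cross again and gradient flow lines cannot intersect. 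Hence exactly one ascending separatrix reaches each maximum and $\deg(\bs q_*)=1$. Finally, your containment argument fails as stated: the cracked domain has both $\bs q_*$ and the minimum $\bs p \in \partial\Omega$ in its closure, so it cannot be ``built entirely from flow lines inside $B$.'' Containment follows instead from invariance: $\partial\Omega$ consists of flow lines of $\tilde f$ (as $\tilde f = f$ there), so trajectories starting in $\Omega$ stay in $\Omega$, and the Neumann domain of $\tilde f$ with $\bs q_*$ on its boundary therefore lies inside $\Omega$.
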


We will see in the proof that $\tilde f$ can be chosen to agree with $f$ outside an arbitrary open set $U \subset \Omega$. However, the difference $\tilde f - f$ may be large inside $U$. The existence of $\tilde f$ is given by the following general lemma.

\begin{lem}
\label{lem:fperturb}
Let $U \subset M$ be an open subset, and $f : U \to \R$ a smooth function having no critical points. There exists a smooth function $\tilde f : U \to \R$, with $\supp\,(\tilde f - f) \subset U$, whose only critical points are a non-degenerate maximum and a non-degenerate saddle.
\end{lem}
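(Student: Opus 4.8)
The goal is to construct, inside a region $U$ with no critical points of $f$, a local perturbation $\tilde f$ that introduces exactly one non-degenerate maximum and one non-degenerate saddle, while leaving $f$ unchanged near $\partial U$. The plan is to work in a single coordinate chart contained in $U$, build an explicit local model for the perturbation there, and then transplant it back to $U$ using a cutoff.

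First I would reduce to a normal form. Since $f$ has no critical points in $U$, the implicit function theorem (equivalently, the regular value / submersion structure) lets me choose, after shrinking to a small coordinate ball $B \subset U$, coordinates $(x,y)$ on $B$ in which $f(x,y) = y + \text{const}$, or more conveniently $f(x,y) = \lambda y$ for a small $\lambda > 0$ chosen so that the perturbation I add can dominate locally. The point is that on a small enough ball the gradient $\nabla f$ is nonvanishing and $f$ has no critical structure, so it may be flattened to a linear function of one coordinate. I would fix a smooth bump $\rho(x,y)$ compactly supported in $B$ and equal to $1$ on a smaller concentric ball $B'$.

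Next I would write the model perturbation. The key observation is that a single localized ``bump'' added to a monotone function creates precisely one local maximum and, by the index/Morse-theoretic bookkeeping on the bump, one saddle, provided the bump is chosen generically. Concretely, I would set
\begin{equation}
	\tilde f = f + \varepsilon \, \rho \cdot h,
\end{equation}
where $h(x,y)$ is an explicit polynomial (for instance of the form $a - b x^2 - c\, y\,(\text{something})$ built to produce one max and one saddle) and $\varepsilon$ is small. On $B'$ the cutoff $\rho \equiv 1$, so there $\tilde f = f + \varepsilon h$ is an explicit function whose critical points and Hessians I can compute directly; I arrange $h$ so that $f + \varepsilon h$ has exactly one non-degenerate maximum and one non-degenerate saddle in $B'$, both with critical value safely in the range where $\rho = 1$. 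Outside $B'$, in the transition region where $\rho$ varies, I must verify that no additional critical points are created: since $|\nabla f| = \lambda > 0$ is bounded below there while $\varepsilon \nabla(\rho h)$ can be made arbitrarily small by shrinking $\varepsilon$, the gradient $\nabla \tilde f = \nabla f + \varepsilon \nabla(\rho h)$ stays nonzero, so $\tilde f$ has no critical points in $B \setminus B'$. Finally $\supp(\tilde f - f) \subset B \subset U$ by construction.

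The main obstacle, and the step requiring the most care, is the gradient-nonvanishing estimate on the transition annulus $B \setminus B'$. One must choose the geometry (the radii of $B'$ and $B$, the size of $\lambda$, and the smallness of $\varepsilon$) in a compatible order so that the genuine critical points land strictly inside $B'$ while $\varepsilon \sup_{B}|\nabla(\rho h)| < \lambda$ guarantees no spurious critical points appear where the cutoff is active. A secondary point is confirming non-degeneracy: the two critical points of the model $f + \varepsilon h$ are non-degenerate for the explicit $h$, and non-degeneracy is an open condition, so it persists for all small $\varepsilon$. Once these estimates are in place, smoothness of $\tilde f$ is immediate since it is a smooth function plus a smooth compactly supported correction, completing the construction.
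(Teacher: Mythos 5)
Your opening reduction (flow-box coordinates in which $f$ is linear) is exactly the paper's first step, and ``monotone function plus localized bump'' is indeed the right mechanism; but there is a genuine gap at the step you yourself flag as delicate, and the smallness scheme you propose cannot close it. If $f+\varepsilon h$ is to have critical points inside $B'$, then $\varepsilon\nabla h=-\nabla f$ at those points, which pins down the ratio $\lambda/\varepsilon=|\nabla h|_{\mathrm{crit}}$; your annulus requirement $\varepsilon\,|\rho\nabla h+h\nabla\rho|<\lambda$ is then equivalent to the \emph{scale-invariant} shape condition
\[
\sup_{B\setminus B'}\bigl|\rho\,\nabla h+h\,\nabla\rho\bigr|\;<\;\bigl|\nabla h\bigr|_{\text{at the interior critical points}},
\]
in which $\varepsilon$ and $\lambda$ have cancelled out. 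So shrinking $\varepsilon$ (or choosing $\lambda$ small) buys nothing: it either destroys the interior critical points or leaves the annulus condition unchanged. Worse, for the concave choices your sketch suggests (e.g. $h=a-bx^2-cy^2$), the claim of ``no spurious critical points in the transition region'' is not merely unproven but false: $f+\varepsilon h$ then has a single non-degenerate maximum (index $+1$), while $\nabla\tilde f$ agrees with the constant field $\nabla f$ near $\partial B$, so by Poincar\'e--Hopf the indices of the zeros of $\nabla\tilde f$ in $B$ must sum to $0$; hence critical points of total index $-1$ \emph{must} appear where $\nabla\rho\neq 0$ --- precisely the region you claim is free of them. (This index count is also why the max--saddle pair is unavoidable for a compactly supported perturbation.) If instead you build the saddle into $h$ (say $h=-x^2+g(y)$ with $g$ cubic), the topological obstruction disappears, but the displayed shape condition still has to be verified, and low-degree polynomials are poorly suited to it: their gradients are largest near the boundary of the box, not at the interior critical points, so for natural instantiations the left side exceeds the right.

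What makes the lemma work is designing the perturbation so that it is \emph{exactly} compactly supported and its critical equations are solvable by hand; then no cutoff $\rho$, no parameter $\varepsilon$, and no transition-region estimate are needed at all. That is the paper's proof: in coordinates where $f=Ax+B$, it sets $\tilde f=f+\beta(x)\gamma(y)$, with $\gamma$ a fixed bump supported in $(-1,1)$ and $\beta(x)=\int_{-1}^{x}\alpha$, where $\alpha$ is supported in $(-1,1)$ with $\int_{-1}^{1}\alpha=0$ (so $\beta$ is compactly supported too). The product structure decouples the critical-point equations: $\partial_y\tilde f=\beta(x)\gamma'(y)$ vanishes inside the support only at $y=0$, and $\partial_x\tilde f=A+\alpha(x)\gamma(y)$ then reduces to the one-variable condition that $\alpha$ crosses a prescribed level exactly twice and transversally; the two crossings produce one non-degenerate maximum and one non-degenerate saddle via an explicitly diagonal Hessian. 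Your sketch would become a proof if you replaced ``polynomial $h$ plus cutoff plus small $\varepsilon$'' by such an explicitly constructed compactly supported bump --- but that construction is the actual content of the lemma, and it is the part your proposal leaves as a black box.
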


\begin{proof}
Since $f$ has no critical points in $U$, we can invoke the canonical form theorem for smooth vector fields and find local coordinates $(x,y)$ with respect to which $f(x,y) = Ax+B$, for $(x,y) \in (-1,1) \times (-1,1)$.  Now choose a smooth function $\alpha(x)$ with $\supp\, \alpha \subset (-1,1)$ and
\begin{align}
	\int_{-1}^1 \alpha(x)\,dx &= 0\,,
\end{align}
so that there exist points $-1 < x_1 < x_2 < 1$ with
\begin{align}
	\alpha(x) \quad \begin{cases} > -A, & -1 < x < x_1\,, \\
	= -A, & x = x_1\,, \\
	< -A, & x_1 < x < x_2\,, \\
	= -A, & x = x_2\,, \\
	> -A, & x_2 < x < 1\,,
	\end{cases}
\end{align}
as shown in Figure \ref{fig:alpha}.

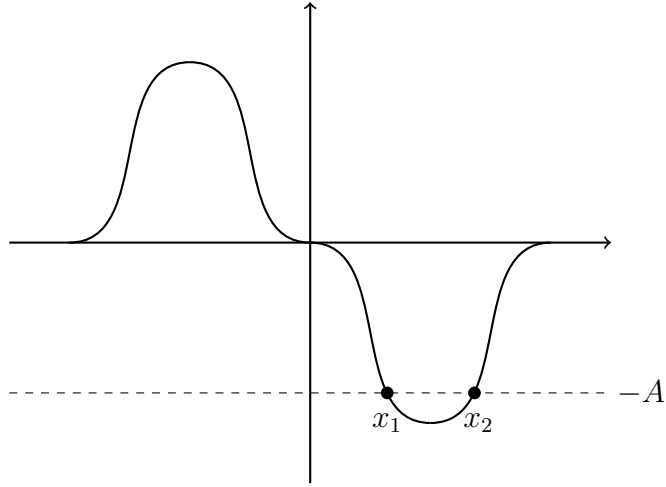
\begin{figure}
\begin{tikzpicture}[scale=0.8]
	\draw[thick,->] (-5,0) to (5,0); 
	\draw[thick,->] (0,-4) to (0,4); 
	\draw[thick] (-4,0) to [out=0, in=180] (-2,3); 
	\draw[thick] (-2,3) to [out=0, in=180] (0,0); 
	\draw[thick] (0,0) to [out=0, in=180] (2,-3); 
	\draw[thick] (2,-3) to [out=0, in=180] (4,0); 
	\draw[dashed] (-5,-2.5) to (5,-2.5); 
	\node at (5.5,-2.5) {$-A$}; 
	\fill (1.28,-2.5) circle[radius=3pt];
	\fill (2.73,-2.5) circle[radius=3pt];
	\node at (1.28,-3) {$x_1$}; 
	\node at (2.8,-3) {$x_2$}; 
\end{tikzpicture}
\caption{The function $\alpha(x)$ used in the proof of Lemma \ref{lem:fperturb}.}
\label{fig:alpha}
\end{figure}

We define
\begin{equation}
	\tilde f(x,y) = f(x,y) + \beta(x) \gamma(y)\,,
\end{equation}
where $\beta(x) = \int_{-1}^x \alpha(t)\,dt$ and $\gamma(y)=\exp\{-1/(1-y^2)\}$. Note that $\gamma$ is a non-negative bump function supported in $(-1,1)$ with $\gamma'(0) = 0$ and $\gamma''(0) < 0$. It follows that
\[
	\frac{\partial \tilde f}{\partial x} = A + \alpha(x)\gamma(y) \quad \text{ and } \quad
	\frac{\partial \tilde f}{\partial y} = \beta(x) \gamma'(y)\,,
\]
and so the only critical points of $\tilde f$ in $U$ are $(x_1,0)$ and $(x_2,0)$. We compute
\begin{align*}
	\frac{\partial^2 \tilde f}{\partial x^2}(x_1,0) &= \alpha'(x_1)\gamma(0) < 0\,, \\
	\frac{\partial^2 \tilde f}{\partial x^2}(x_2,0) &= \alpha'(x_2)\gamma(0) > 0\,, \\
	\frac{\partial^2 \tilde f}{\partial y^2}(x_i,0) &= \beta(x_i) \gamma''(0) < 0\,,
\end{align*}
and conclude that $(x_1,0)$ and $(x_2,0)$ are a non-degenerate maximum and a non-degenerate saddle, respectively.
\end{proof}
\begin{proof}[Proof of Theorem \ref{thm:typeii}]
If $\Omega$ is cracked we simply choose $\tilde f = f$ and there is nothing to prove.
Therefore we assume that $\Omega$ is regular. Since $f$ is Morse--Smale, Theorem \ref{thm:topological-properties-manifolds} says the closure of $\Omega$ contains exactly four critical points, all of which are on the boundary: a maximum $\bs{q}$, a minimum $\bs{p}$, and saddle points $\bs{r}_1$ and $\bs{r}_2$; see Figure \ref{fig:Neumann-domains-schematic}.

Now choose $\tilde f$ according to Lemma \ref{lem:fperturb}, for some open set $U \Subset \Omega$. By construction, $\tilde f$ has two critical points in $\Omega$: a maximum $\bs{q_*}$ and a saddle point $\bs{r_*}$. Since $\tilde f$ is a Morse function, $\bs{r_*}$ has degree four, i.e. there are four Neumann lines connected to $\bs r_*$. We obtain the result by studying the endpoints of these lines, as depicted in Figure \ref{fig:Neumann-domains-contradiction}.
\begin{figure}[ht]
\centering{}\includegraphics[width=0.6\textwidth]{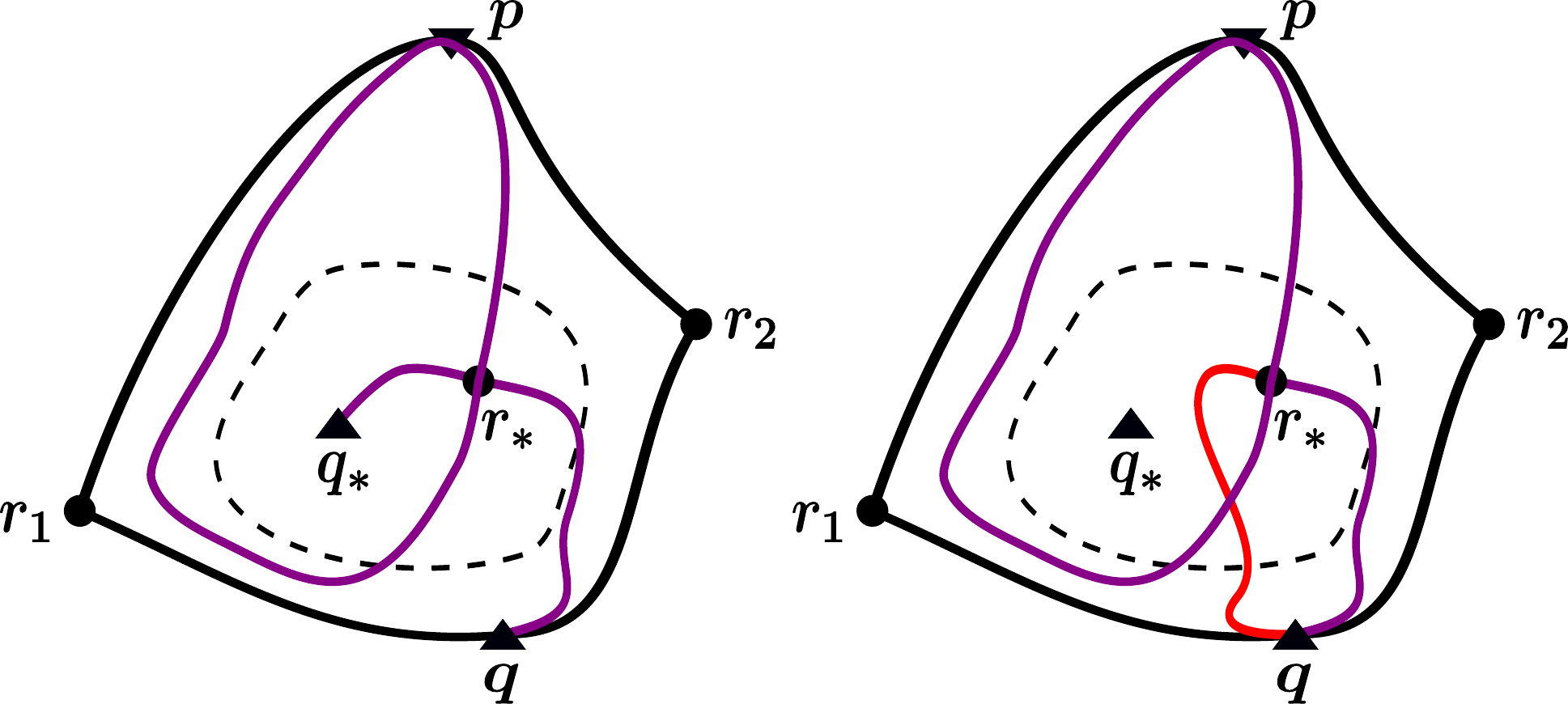} \caption{Left: the cracked Neumann domain constructed in Theorem \ref{thm:typeii}, with Neumann lines shown in purple. Right:
if two Neumann lines connected $\bs r_*$ to $\bs q$, one of them (shown in red) would have to intersect another Neumann line, which is impossible.
%
The dashed line represents the boundary of the set $U$ containing $\supp\,(\tilde f - f)$.
}
\label{fig:Neumann-domains-contradiction}
\end{figure}
Since $\tilde f$ agrees with $f$ in a neighbourhood of $\partial\Omega$, the invariant manifolds $W^s(\bs r_i)$ and $W^u(\bs r_i)$ are unchanged by the perturbation. As a result, it is not possible for any of the Neumann lines coming from $\bs r_*$ to end at $\bs r_1$ or $\bs r_2$. Therefore, the four Neumann lines from $\bs r_*$ can only end at $\bs q$, $\bs p$ or $\bs q_*$, so it follows from Lemma \ref{prop:Morse-Smale-on-2d} that $\tilde f$ is Morse--Smale. The two lines along which $\tilde f$ is decreasing must end at $\bs p$, since it is the only minimum in $\overline\Omega$. This means the two lines along which $f$ is increasing are connected to either $\bs q$ or $\bs q_*$. We claim that there is one Neumann line connected to each maximum.

Suppose instead that both ended at $\bs q$. Then the union of these Neumann lines forms a closed loop. Similarly, the union of the two lines ending at $\bs p$ is a closed loop. Both loops intersect at $\bs r_*$, where they are orthogonal by Proposition \ref{prop:angles-at-critical-pts}\eqref{enu:prop-angles-at-critical-pts-1}. Since $\Omega$ is simply connected, this can only happen if the loops also intersect at a point other than $\bs r_*$, but this is impossible since gradient flow lines cannot cross. The same argument shows that these lines cannot both be connected to $\bs q_*$, hence one must end at each maximum.

Since all of the Neumann lines in $\overline\Omega$ have been accounted for, this means $\bs q_*$ has degree one, hence the Neumann domain with $\bs q_*$ on its boundary is cracked.
\end{proof}

{\small{}\bibliographystyle{amsalpha}
\bibliography{Literature}

\newcommand{\etalchar}[1]{$^{#1}$}
\def\polhk#1{\setbox0=\hbox{#1}{\ooalign{\hidewidth
  \lower1.5ex\hbox{`}\hidewidth\crcr\unhbox0}}}
\providecommand{\bysame}{\leavevmode\hbox to3em{\hrulefill}\thinspace}
\providecommand{\MR}{\relax\ifhmode\unskip\space\fi MR }
\providecommand{\MRhref}[2]{%
  \href{http://www.ams.org/mathscinet-getitem?mr=#1}{#2}
}
\providecommand{\href}[2]{#2}
\begin{thebibliography}{ABBE20}

\bibitem[ABBE20]{abbe:2018}
L.~Alon, Ram Band, M.~Bersudsky, and S.~Egger, \emph{Neumann domains on graphs
  and manifolds}, Analysis and Geometry on Graphs and Manifolds, London Math.
  Soc. Lecture Note Ser., vol. 461, Cambridge Univ. Press, 2020, pp.~203--249.

\bibitem[BCJ{\etalchar{+}}21]{BCJLS21}
M.~Beck, G.~Cox, C.~Jones, Y.~Latushkin, and A.~Sukhtayev, \emph{A dynamical
  approach to semilinear elliptic equations}, Annales de l'Institut Henri
  Poincar\'e C, Analyse non lin\'eaire \textbf{38} (2021), no.~2, 421--450.

\bibitem[BET20]{band2017ground}
R.~Band, S.~Egger, and A.~Taylor, \emph{The spectral position of {N}eumann
  domains on the torus}, J. Geom. Anal. (2020),
  \url{https://doi.org/10.1007/s12220-020-00444-9}.

\bibitem[BF16]{band:2016}
R.~Band and D.~Fajman, \emph{Topological properties of {N}eumann domains}, Ann.
  Henri Poincar\'{e} \textbf{17} (2016), 2379--2407.

\bibitem[BH04]{Ban:2004}
A.~Banyaga and D.~Hurtubise, \emph{Lectures on {M}orse homology}, Kluwer
  Academic Publishers Group, 2004.

\bibitem[Che76]{C76}
S.~Y. Cheng, \emph{Eigenfunctions and nodal sets}, Comment. Math. Helv.
  \textbf{51} (1976), no.~1, 43--55. \MR{0397805 (53 \#1661)}

\bibitem[EE87]{Evans:1987}
D.~E. Edmunds and W.~D. Evans, \emph{Spectral theory and differential
  operators}, Oxford University Press, 1987.

\bibitem[FP20]{flamencour:20}
B.~Flamencourt and K.~Pankrashkin, \emph{Strong coupling asymptotics for
  {$\delta$}-interactions supported by curves with cusps}, J. Math. Anal. Appl.
  \textbf{491} (2020), 124287.

\bibitem[Har60]{Hartman:1960}
P.~Hartman, \emph{On local homeomorphisms of {E}uclidean spaces}, Bol. Soc.
  Mat. Mexicana (2) \textbf{5} (1960), 220--241.

\bibitem[HSS91]{HSS91}
R.~Hempel, L.~Seco, and B.~Simon, \emph{The essential spectrum of {N}eumann
  {L}aplacians on some bounded singular domains}, J. Funct. Anal. \textbf{102}
  (1991), 448--483.

\bibitem[JMS92]{simon:92}
V.~Jak\v{s}i\'{c}, S.~Mol\v{c}anov, and B.~Simon, \emph{Eigenvalue asymptotics
  of the {N}eumann {L}aplacian of regions and manifolds with cusps}, J. Funct.
  Anal. \textbf{106} (1992), 59--79.

\bibitem[McL00]{McL}
W.~McLean, \emph{Strongly elliptic systems and boundary integral equations},
  Cambridge University Press, 2000.

\bibitem[MF14]{fulling:14}
R.~McDonald and S.~Fulling, \emph{Neumann nodal domains}, Philos. Trans. R.
  Soc. Lond. Ser. A Math. Phys. Eng. Sci. \textbf{372} (2014), 20120505, 6.

\bibitem[MP01]{MP01}
V.~G. Maz'ya and S.~V. Poborchi, \emph{Differentiable functions on bad
  domains}, World Scientific Publishing, 2001.

\bibitem[Per01]{Perko:2001}
L.~Perko, \emph{Differential equations and dynamical systems}, Springer-Verlag,
  2001.

\bibitem[RS72]{ReeSim72}
M.~Reed and B.~Simon, \emph{Methods of modern mathematical physics. {I}.
  {F}unctional analysis}, Academic Press, 1972.

\bibitem[RS78]{reed1978methods}
\bysame, \emph{Methods of modern mathematical physics. {IV}. {A}nalysis of
  operators}, Academic Press, New York, 1978.

\bibitem[Tay18]{python_Taylor}
A.~J. Taylor, \emph{pyneumann toolkit},
  \url{https://github.com/inclement/neumann}, 2018.

\bibitem[Uhl76]{Uhl_ajm76}
K.~Uhlenbeck, \emph{Generic properties of eigenfunctions}, Amer. J. Math.
  \textbf{98} (1976), 1059--1078.

\end{thebibliography}
}{\small\par}
\end{document}